\renewcommand*{\backrefalt}[4]{%
    \ifcase #1 (Not cited.)%
    \or       (Cited on page {\textcolor{black}{#2}}.)
    \else     (Cited on pages {\textcolor{black}{#2}}.)
    \fi}
\newenvironment{psmallmatrix}
  {\left(\begin{smallmatrix}}
  {\end{smallmatrix}\right)}
\newtheorem{theorem}{Theorem}[section]
\newtheorem{proposition}[theorem]{Proposition}
\newtheorem{lemma}[theorem]{Lemma}
\newtheorem{corollary}[theorem]{Corollary}
\newtheorem*{main}{Main Theorem}
\theoremstyle{definition}
\newtheorem{remark}[theorem]{Remark}
\newtheorem{example}[theorem]{Example}
\newtheorem{definition}[theorem]{Definition}
\renewcommand{\to}{\longrightarrow}
\newcommand{\ZZ}{\mathbb{Z}}
\newcommand{\CC}{\mathbb{C}}
\newcommand{\PP}{\mathbb{P}}
\newcommand{\imh}{\textrm{Im}\; h}
\global\long\global\long\def\NS{{\rm NS}}
 \global\long\global\long\def\Kl{{\rm Kl}}
 \global\long\global\long\def\aut{{\rm Aut}}
\global\long\global\long\def\OO{\mathcal{O}}
\global\long\global\long\def\LL{\mathcal{L}}
 \global\long\global\long\def\l{\mathbb{\lambda}}
\global\long\global\long\def\s{\mathbb{\sigma}}
\definecolor{ccqqqq}{rgb}{0.8,0.,0.}
\definecolor{bcduew}{rgb}{0.7372549019607844,0.8313725490196079,0.9019607843137255}
\title{A pair of  rigid surfaces  \\ with $p_g=q=2$ and $K^2=8$ \\ whose universal cover is not the bidisk}
\author{Francesco Polizzi \and Carlos Rito \and Xavier Roulleau}
\date{}
\begin{document}
\maketitle
%


\begin{abstract}
We construct two  complex-conjugated rigid minimal surfaces with  $p_g=q=2$ and $K^2=8$ whose universal cover is not biholomorphic to the bidisk $\mathbb{H} \times \mathbb{H}$.
We show that these are the unique surfaces with these invariants and Albanese map of degree $2$, 
apart from the family of
product-quotient surfaces given in \cite{Pe11}.
This completes the classification of surfaces with $p_g=q=2, K^2=8$ and Albanese map of degree $2$.
\end{abstract}


\Footnotetext{{}}{\textit{$2010$ Mathematics Subject Classification}:
14J29, 14J10}

\Footnotetext{{}}{\textit{Keywords}: Surfaces of general type, Albanese map, double covers, abelian surfaces, rigid surfaces}


\setcounter{section}{-1}

\section{Introduction} \label{sec:intro}

Despite the work of many authors, minimal surfaces $S$ of general type with the lowest possible value of the holomorphic Euler characteristic, namely such that $\chi(\mathcal{O}_S)=1$, are far from being classified,
see e.g. the survey papers \cite{BaCaPi06}, \cite{BaCaPi11} and \cite{MP12} for a detailed bibliography on the subject.
These surfaces satisfy the Bogomolov-Miyaoka-Yau inequality  $K^2\leq 9.$ 

The ones with $K^2=9$ are rigid, their universal cover is the unit ball in $\CC^2$ and  $p_g=q\leq 2$. 
The fake planes, i.e. surfaces  with $p_g=q=0$, have been classified in \cite{PrYe} and \cite{CaSt}, the two Cartwright-Steger surfaces  \cite{CaSt} satisfy $q=1$, whereas no example is known for $p_g=q=2$.

The next case is 
$K^2=8$. In this situation,  Debarre's inequality for irregular surfaces \cite{Deb81} implies 
\begin{equation*}
0\leq p_g=q\leq 4.
\end{equation*}
The cases $p_g=q=3$ and $p_g=q=4$ are nowadays classified (\cite{HacPar}, \cite{Piro}, \cite[Beauville's appendix]{Deb81}), whereas for $p_g=q\leq 2$  some families are known (\cite{Sh78}, \cite{ BCG05}, \cite{ Pol06}, \cite{ Pol08}, \cite{ CP09}, \cite{ Pe11}) but there is no complete description yet.

All the examples of minimal surfaces with $\chi=1$ and $K^2=8$ known so far are uniformized by the bidisk $\mathbb H\times\mathbb H$, where $\mathbb H = \{z \in 	\mathbb{C} \; | \; \mathrm{Im}\, z >0 \}$ is the Poincar\'e upper half-plane; so the following question naturally arose:

\smallskip
\emph{Is there a smooth minimal surface of general type with invariants $\chi=1$ and $K^2=8$ and whose universal cover is not biholomorphic to $\mathbb H\times\mathbb H$?}
\smallskip

For general facts about surfaces uniformized by the bidisk, we refer the reader to \cite{CaFra}. One of the aims of this paper is to give an affirmative answer to the question above. In fact we construct two rigid, minimal surfaces with $p_g=q=2$ and $K^2=8$ whose universal cover is not the bidisk. 
Moreover, we show that these surfaces are complex-conjugated and that they are the unique minimal surfaces with these invariants and having Albanese map of degree $2$, apart from the family of product-quotient surfaces constructed in \cite{Pe11}.
This complete the classification of minimal surfaces with $p_g=q=2$, $K^2=8$ and Albanese map of degree $2$.

Our results can be summarized as follows, see Proposition \ref{prop:branch-locus}, Theorem \ref{thm:class-type I}, Theorem \ref{thm:class-type II},
Theorem \ref{thm:The-number-of-Surfaces} and Proposition \ref{prop:univ-II}.
\begin{main}
Let $S$ be a smooth, minimal surface of general type with $p_g=q=2$, $K^2=8$ and such that its Albanese map $\alpha \colon S \to A:=\mathrm{Alb}(S)$ is a generically
finite double cover. Writing $D_A$ for the branch locus of $\alpha$, there are exactly two possibilities, both of which occur$:$ 
\begin{itemize}
\item[$\boldsymbol{(I)}$] $D_A^2=32$ and $D_A$ is an irreducible curve with one ordinary point of multiplicity $6$ and no other singularities. These are the product-quotient surfaces constructed in \emph{\cite{Pe11}}$;$
\item[$\boldsymbol{(II)}$] $D_A^2=24$ and $D_A$ has two ordinary points $p_1$, $p_2$ of multiplicity $4$ and no other singularities.
More precisely, in this case we can write 
\begin{equation*}
D_A = E_1+E_2+E_3+E_4,
\end{equation*}
where the $E_i$ are elliptic curves intersecting pairwise transversally at $p_1$, $p_2$ and not elsewhere. Moreover, $A$ is an \'etale double cover of the abelian surface $A':=E' \times E'$, where $E'$ denotes the equianharmonic elliptic curve. 

Up to isomorphism, there are exactly two such surfaces, which are complex-conjugate. Finally, the universal cover of these surfaces is not biholomorphic to the bidisk $\mathbb H\times\mathbb H.$
\end{itemize}
\end{main}
According to the dichotomy in the Main Theorem, we will use the terminology \emph{surfaces of type I} and \emph{surfaces of type II}, respectively. Besides answering the question above about the universal cover, the Main Theorem is also significant because 
\begin{itemize} 
\item it contains a new geometric construction of rigid surfaces, which is usually something hard to do;
\item it provides a substantially new piece in the fine classification of minimal surfaces of general type with $p_g=q=2$; 
\item it shows that surfaces of type $II$ present the the so-called  \verb|Diff|$\nRightarrow$\verb|Def| phenomenon, meaning that their diffeomorphism type  does not determine their deformation class, see Remark \ref{rem:def-diff}.
\end{itemize}
Actually, the fact that there is exactly one surface of type $II$ up to complex conjugation is a remarkable feature. The well-known Cartwright-Steger surfaces \cite{CaSt} share the same property, however our construction is of a different nature, more geometric and explicit.
 
The paper is organized as follows.

In Section \ref{sec:Albanese} we provide a general result for minimal surfaces $S$ with $p_g=q=2$, $K^2=8$ and Albanese map $\alpha \colon S \to A$ of degree $2$, and we classify all the possible branch loci $D_A$ for $\alpha$ (Proposition \ref{prop:branch-locus}).

In Section \ref{sec:type I} we consider surfaces of type $I$, showing that they coincide with the family of  
product-quotient surfaces constructed in \cite{Pe11} (Theorem \ref{thm:class-type I}).

In Section \ref{sec:type II} we start the investigation of surfaces of type $II$. The technical core of this part is Proposition \ref{prop:2-divisibility}, showing that, in this situation, the pair $(A, \, D_A)$ can be realized as an \'etale double cover of the pair $(A', \, D_A')$, where $D_{A'}$ is a configuration of four elliptic curves in $A'=E' \times E'$ intersecting pairwise and transversally only at the origin $o' \in A'$ (as far as we know, the existence of such a configuration was first remarked in \cite{Hir84}). The most difficult part is to prove that we can choose the double cover $A \to A'$ in such a way that the curve $D_A$ becomes $2$-divisible in the Picard group of $A$ (Proposition \ref{prop:2-divisibility}). The rigidity of $S$ then follows from a characterization of $A'$ proven in \cite{KH04} (cf. also \cite{Aide}). 

In Section \ref{sec:classification-II} we show that there are precisely two surfaces of type $II$ up to isomorphism, and that they are complex-conjugated (Theorem \ref{thm:The-number-of-Surfaces}). In order to do this, we have to study the groups of automorphisms and anti-biholomorphisms of $A$ that preserve the branch locus $D_A$, and their permutation action on the set of the sixteen square roots of $\mathcal{O}_A(D_A)$ in the Picard group of $A$ (Proposition \ref{prop:permutation-action}). 

Finally, we show that the universal cover of the surfaces of type $II$
is not biholomorphic to $\mathbb{H} \times \mathbb{H}$ (Proposition \ref{prop:univ-II} and Remark \ref{rem:Shimura}), we note that they can be given the structure of an open ball quotient in at least two different ways (Remark \ref{openball}) and we sketch an alternative geometric construction for their Albanese variety $A$ (Remark \ref{remGeomConstr}).

\bigskip 
\noindent\textbf{Acknowledgments.}
F. Polizzi was partially supported by GNSAGA-INdAM.
C. Rito was supported by FCT (Portugal) under the project PTDC/MAT-GEO/2823/2014,
the fellowship SFRH/ BPD/111131/2015 and by CMUP (UID/MAT/00144/2013),
which is funded by FCT with national (MEC) and European structural funds through the programs FEDER, under the partnership agreement PT2020.
We thank M. Bolognesi, F. Catanese, F. Lepr\'evost,  B. Poonen, R. Pardini, C. Ritzenthaler and M. Stoll for useful conversations and suggestions, and the MathOverflow user Ulrich for his answer in the thread:
\verb|http://mathoverflow.net/questions/242406|\\
This work started during the workshop \emph{Birational Geometry of Surfaces}, held at the Department of Mathematics of the University of Roma Tor Vergata from January 11 to January 15, 2016. We warmly thank the organizers for the invitation and the hospitality.

Finally, we are	indebted to the anonymous referees for their helpful comments and remarks, that considerably improved the presentations of these results.
\bigskip

\noindent\textbf{Notation and conventions.} We work over the field of complex numbers. All varieties are assumed to be projective. 
For a smooth surface $S$, $K_S$ denotes the \emph{canonical class}, $p_g(S)=h^0(S, \, K_S)$ is the \emph{geometric genus},
$q(S)=h^1(S, \, K_S)$ is the \emph{irregularity} and $\chi(\mathcal{O}_S)=1-q(S)+p_g(S)$ is the \emph{Euler-Poincar\'e characteristic}.

Linear equivalence of divisors is denoted by $\simeq$.
If $D_1$ is an effective divisor on $S_1$ and $D_2$ is an effective divisor on $S_2$, we say that the pair $(S_1, \, D_1)$ is an \emph{\'etale double cover} of the pair $(S_2, \, D_2)$ if there exists an \'etale double cover $f \colon S_1 \to S_2$ such that $D_1 = f^* D_2$. 

If $A$ is an abelian surface, we denote by $(-1)_A \colon A \to A$ the involution $x \to -x$. If $a \in A$, we write $t_a \colon A \to A$ for the translation by $a$, namely $t_a(x)=x+a$. We say that a divisor $D \subset A$ (respectively, a line bundle $\mathcal{L}$ on $A$) is \emph{symmetric} if $(-1)_A^*D = D$ (respectively, if $(-1)_A^* \mathcal{L} \simeq \mathcal{L}$).

\section{The structure of the Albanese map} \label{sec:Albanese}

Let us denote by $S$ a minimal surface of general type with $p_g=q=2$ and maximal Albanese dimension, and by $$\alpha \colon S \to A=\textrm{Alb}(S)$$ its Albanese map. It follows from \cite[Section 5]{Ca13} that $\deg \alpha$ is equal to the index of the image of
$\wedge^4 H^1(S, \, \mathbb{Z})$ inside $H^4(S, \, \mathbb{Z}) = \mathbb{Z}[S]$, hence it is a topological invariant of $S$.
So, one can try to classify these surfaces by looking at the pair of invariants $\left(K_S^2, \, \deg \alpha\right)$.

\begin{lemma} \label{lem:alb-isom}
Let $S$ be as above and assume that there is a generically  finite double cover $\tilde{\alpha} \colon S \to \widetilde{A}$, where $\widetilde{A}$ is an abelian surface. Then  
$\widetilde{A}$ can be identified with $A=\mathrm{Alb}(S)$ and there exists an automorphism $\psi \colon A \to A$ such that $\tilde{\alpha} = \psi \circ \alpha$.
\end{lemma}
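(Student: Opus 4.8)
The plan is to exploit the universal property of the Albanese variety. Fix a base point $s_0 \in S$ and normalise $\alpha$ so that $\alpha(s_0)=0_A$. Since $\widetilde{A}$ is an abelian variety, the morphism $\tilde{\alpha} \colon S \to \widetilde{A}$ must factor through the Albanese map: after translating $\widetilde{A}$ so that $\tilde{\alpha}(s_0)$ becomes the origin, there is a unique homomorphism of abelian varieties $\psi_0 \colon A \to \widetilde{A}$ together with a translation $t_b$ such that $\tilde{\alpha} = t_b \circ \psi_0 \circ \alpha$. Once $\psi_0$ is shown to be an isomorphism, we may use it to identify $\widetilde{A}$ with $A$, and then $\psi := t_b \circ \psi_0$ is an automorphism of the variety $A$ (a group isomorphism composed with a translation) satisfying $\tilde{\alpha} = \psi \circ \alpha$. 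Thus everything reduces to proving that $\psi_0$ is an isomorphism.

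First I would check that $\psi_0$ is an isogeny. Because $S$ has maximal Albanese dimension and $\dim A = q = 2 = \dim S$, the image $\alpha(S)$ is a two-dimensional closed subvariety of the irreducible surface $A$, hence $\alpha$ is surjective. Since $\tilde{\alpha} = t_b \circ \psi_0 \circ \alpha$ is dominant, $\psi_0$ is surjective as well; being a surjective homomorphism between abelian surfaces of the same dimension, it has finite kernel and is therefore an isogeny.

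The decisive step is then a degree computation. For generically finite dominant morphisms of surfaces the degree is multiplicative, so $2 = \deg \tilde{\alpha} = \deg \psi_0 \cdot \deg \alpha$, where $\deg \psi_0$ is the order of the kernel of the isogeny. If $\deg \alpha = 1$ then $\alpha$ would be birational, whence $S$ would be birational to the abelian surface $A$; this is impossible, since $S$ is of general type while $A$ has Kodaira dimension $0$. Therefore $\deg \alpha = 2$ and $\deg \psi_0 = 1$, so $\psi_0$ is a bijective homomorphism, i.e. an isomorphism of abelian varieties. This identifies $\widetilde{A}$ with $A$ and yields the automorphism $\psi$ with $\tilde{\alpha} = \psi \circ \alpha$, as required.

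The only genuine subtlety, and the step I would treat most carefully, is excluding the degenerate factorisation $\deg \alpha = 1$, $\deg \psi_0 = 2$; everything else is a formal consequence of the universal property of $\alpha$ and of the multiplicativity of degrees. The exclusion is clean here because the general type hypothesis on $S$ is incompatible with $\alpha$ being birational onto an abelian surface, but it is worth isolating, since it is precisely what forces $\tilde{\alpha}$ to be, up to the automorphism $\psi$, the Albanese map itself.
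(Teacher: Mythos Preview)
Your proof is correct and follows the same line as the paper's: factor $\tilde{\alpha}$ through the Albanese map via the universal property, then use a degree count to see that the induced map $A \to \widetilde{A}$ is an isomorphism. If anything, you are slightly more explicit than the paper, which simply asserts that ``$\widetilde{\alpha}$ and $\alpha$ are both generically of degree $2$'' and concludes; you spell out why $\deg\alpha = 1$ is excluded (it would make $S$ birational to an abelian surface, contradicting general type), which is exactly the missing justification behind that assertion.
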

\begin{proof}
The universal property of the Albanese map (\cite[Chapter V]{Be96}) implies that the morphism $\widetilde{\alpha} \colon S \to \widetilde{A}$ factors through a morphism $\psi \colon A \to \widetilde{A}$. But $\widetilde{\alpha}$ and $\alpha$ are both generically of degree $2$, so $\psi$ must be a birational map between abelian varieties, hence an isomorphism. Thus we can identify $\widetilde{A}$ with $A$ and with this identification $\psi$ is an automorphism of $A$.
\end{proof}

Throughout the paper, we will assume $\deg \alpha=2$, namely that $\alpha \colon S \to A$ is a generically finite  double cover. Let us denote by $D_A \subset A$ the branch locus of $\alpha$ and let 
\begin{equation} \label{dia.alpha}
\xymatrix{
S \ar[r]^{c} \ar[dr]_{\alpha} & X \ar[d]^{\alpha_X} \\
 & A}
\end{equation}
be its Stein factorization. The map $\alpha_X \colon X \to A$ is a finite double cover and the fact that $S$ is smooth implies that $X$ is normal, see \cite[Chapter I, Theorem 8.2]{BHPV03}. 
In particular $X$ has at most isolated singularities, hence $D_A$ is reduced. Moreover, $D_A$ is $2$-divisible in $\mathrm{Pic}(A)$, in other words there exists a divisor $L_A$ on $A$ such that $D_A \simeq 2L_A$.

We have a \emph{canonical resolution diagram}
\begin{equation} \label{dia.resolution}
\begin{CD}
\bar{S}  @> >> X\\
@V{\beta}VV  @VV \alpha_X V\\
B @> {\varphi}>> A,\\
\end{CD}
\end{equation}
see \cite[Chapter III, Section 7]{BHPV03}, \cite[Section 2]{PePol13b} and \cite{Ri10}. Here $\beta \colon \bar{S} \to B$ is a finite double cover, $\bar{S}$ is smooth, but not necessarily minimal, $S$ is the minimal model of $\bar{S}$ and $\varphi \colon B \to A$ is composed of a series of blow-ups. Let
$x_1, \, x_2, \ldots, x_r$ be the centers of these blow-ups and $\mathsf{E}_1, \ldots, \mathsf{E}_r$ 
the reduced strict transforms in $B$ of the corresponding exceptional divisors. Then the scheme-theoretic inverse image $\mathcal{E}_j$ of $x_j$ in $B$ is a linear combination with non-negative, integer coefficients of the curves in the set $\{\mathsf{E}_i\}$, and we have 
\begin{equation*}
\mathcal{E}_i \mathcal{E}_j=- \delta_{ij}, \quad K_B=\varphi^*
K_A+\sum_{i=1}^r \mathcal{E}_i. 
\end{equation*}
Moreover the branch locus $D_B$ of $\beta \colon \bar{S} \to B$ is smooth and can be written as
\begin{equation} \label{hat B}
D_B=\varphi^* D_A- \sum_{i=1}^r d_i \mathcal{E}_i,
\end{equation}
where the $d_i$  are even positive integers, say $d_i=2m_i$. Motivated  by \cite[p. 724-725]{Xi90} we introduce the following definitions:
\begin{itemize}
\item a \emph{negligible singularity} of $D_A$ is a point $x_j$ such that $d_j=2$, and $d_i \leq 2$ for any point $x_i$ infinitely near to $x_j;$
\item a $[2d+1, \, 2d+1]$-\emph{singularity} of $D_A$ is a pair $(x_i, \, x_j)$ such that $x_i$ belongs to the first infinitesimal neighbourhood of $x_j$ and $d_i=2d + 2$,  $d_j=2d$.
\item a $[2d, \, 2d]$-\emph{singularity} of $D_A$ is a pair $(x_i, \, x_j)$ such that $x_i$ belongs to the first infinitesimal neighbourhood of $x_j$ and $d_i=d_j=2d$;
\item a \emph{minimal singularity} of $D_A$ is a point $x_j$ such that its inverse image in $\bar{S}$ via the canonical resolution contains no $(-1)$-curves.
\end{itemize}
Let us give some examples:
\begin{itemize}
\item an ordinary double point and an ordinary triple point are both minimal, negligible singularities. More generally, an ordinary $d$-ple point is a always a minimal singularity, and it is non-negligible for $d \geq 4$; 
\item  a $[3, \, 3]$-point (triple point $x_j$ with a triple point $x_i$ in its first infinitesimal neighbourhood) is neither minimal nor negligible. Indeed, in this case we have 
\begin{equation*} 
D_B=\varphi^*D_A-2 \mathsf{E}_j - 6 \mathsf{E}_i = \varphi^*D_A - 2 \mathcal{E}_j - 4 \mathcal{E}_i,
\end{equation*}
with $\mathcal{E}_j=\mathsf{E}_j+\mathsf{E}_i$ and  $\mathcal{E}_i=\mathsf{E}_i$. The divisor $\mathsf{E}_j$ is a $(-2)$-curve contained in the branch locus of $\beta \colon \bar{S} \to B$, so its pull-back in $\bar{S}$ is a $(-1)$-curve;
\item a $[4, \, 4]$-point (quadruple point $x_j$ with a quadruple point $x_i$ in its first infinitesimal neighbourhood) is minimal and non-negligible. Indeed, in this case we have 
\begin{equation*} 
D_B=\varphi^*D_A- 4 \mathsf{E}_j - 8 \mathsf{E}_i = \varphi^*D_A - 4 \mathcal{E}_j - 4 \mathcal{E}_i,
\end{equation*}
with $\mathcal{E}_j=\mathsf{E}_j+\mathsf{E}_i$ and $\mathcal{E}_i=\mathsf{E}_i$. The divisor $\mathsf{E}_j$ is a $(-2)$-curve that does not intersect the branch locus of $\beta \colon \bar{S} \to B$, so its pull-back in $\bar{S}$ consists of the disjoint union of two $(-2)$-curves, that are the unique rational curves coming from the canonical resolution of the singularity.
\end{itemize}

Let us come back now to our original problem.
\begin{lemma} \label{lem:minimal}
In our situation, the following holds$:$
\begin{itemize}
\item[$\boldsymbol{(a)}$] we have $S= \bar{S}$ in \eqref{dia.resolution} if and only if all singularities of $D_A$ are minimal$;$
\item[$\boldsymbol{(b)}$] if $S$ contains no rational curves, then $D_A$ contains no negligible singularities. 
\end{itemize}
\end{lemma}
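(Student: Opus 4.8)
The plan is to analyse the rational curves of $\bar S$, using crucially that the abelian surface $A$ contains no rational curve at all, so that every rational curve of $\bar S$ must lie over one of the points $x_1, \dots, x_r$ blown up by $\varphi$. Indeed, if $C \subset \bar S$ is any rational curve, then $\beta(C) \subset B$ is again rational (the finite map $\beta$ sends $C$ onto a curve of geometric genus $0$), so $\varphi(\beta(C)) \subset A$ is a rational curve or a point; as $A$ is abelian it must be a point, whence $\beta(C)$ is $\varphi$-exceptional and $C$ lies in the inverse image of some $x_j$.

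For part (a) I would use that, $S$ being the minimal model of $\bar S$, one has $S = \bar S$ exactly when $\bar S$ carries no $(-1)$-curve. Applying the observation above to a $(-1)$-curve shows that every $(-1)$-curve of $\bar S$ lies in the inverse image of some $x_j$; conversely, by definition $x_j$ is minimal precisely when its inverse image contains no $(-1)$-curve. Hence $\bar S$ has a $(-1)$-curve if and only if some $x_j$ is non-minimal, and the contrapositive is exactly statement (a).

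For part (b) I would argue by contraposition, producing a rational curve on $S$ out of a negligible singularity $x_j$ of $D_A$. Such a singularity is minimal, and the corresponding singularity of the normal double cover $X$ is a rational double point; thus its inverse image $\Gamma_j \subset \bar S$ is a nonempty configuration of $(-2)$-curves containing no $(-1)$-curve (see \cite[Chapter III, Section 7]{BHPV03}). It remains to check that $\Gamma_j$ persists in the minimal model. The morphism $\bar S \to S$ is a composition of contractions of $(-1)$-curves, so its exceptional locus consists of rational curves; by the observation above each of them lies over some point of $A$, and the inverse images of distinct points of $A$ are pairwise disjoint. Since $\Gamma_j$ initially contains no $(-1)$-curve, and contracting $(-1)$-curves lying over points different from $x_j$ can neither meet the disjoint configuration $\Gamma_j$ nor turn any of its components into a $(-1)$-curve, no component of $\Gamma_j$ is ever contracted. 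Therefore $\Gamma_j$ maps isomorphically onto a rational curve of $S$, contradicting the hypothesis that $S$ has no rational curves.

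The main obstacle is precisely this last point in (b): one must be sure that the rational curves created by a negligible singularity are not swallowed up when passing from $\bar S$ to the minimal model $S$. This is where the disjointness of the exceptional configurations over distinct points of $A$ is used, together with the input — the very reason negligible singularities are minimal — that over such a singularity the double cover $X$ acquires only a rational double point, and hence, upstairs, only $(-2)$-curves.
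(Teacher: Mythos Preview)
Your argument is correct and follows essentially the same route as the paper's proof: both parts hinge on the observation that every rational curve of $\bar S$ is mapped by $\varphi\circ\beta$ to a point of $A$, hence lies over some $x_j$. The paper's treatment of (b) is more elliptical (it simply says the $(-2)$-curve ``cannot be contracted by the blow-down morphism $\bar S\to S$''), whereas you spell out the disjointness of the fibres over distinct points of $A$ to justify why contracting $(-1)$-curves elsewhere cannot affect $\Gamma_j$; this is the same idea made explicit.
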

\begin{proof}
\begin{itemize}
\item[$\boldsymbol{(a)}$] If $D_A$ contains a non-minimal singularity then, by definition, $\bar{S}$ is not a minimal surface, hence $\bar{S} \neq S$.
Conversely, if all singularities of $D_A$ are minimal then there are no $(-1)$-curves on $\bar{S}$ coming from the resolution of the singularities of $D_A$. Since the abelian surface $A$ contains no rational curves, this implies that $\bar{S}$ contains no $(-1)$-curves at all, so $\bar{S}=S$.  
\item[$\boldsymbol{(b)}$] Any negligible singularity of $D_A$ is minimal and gives rise to some rational double point in $X$, and hence to some $(-2)$-curve in $\bar{S}$ that cannot be contracted by the blow-down morphism $\bar{S} \to S$ (since $A$ contains no rational curves, it follows as before that all $(-1)$-curves in $\bar{S}$ come from the resolution of singularities of $D_A$). This is impossible because we are assuming that $S$ contains no rational curves.
\end{itemize}
\end{proof}
By using the formulae in \cite[p. 237]{BHPV03}, we obtain
\begin{equation} \label{eq:sum-sing}
2=2 \chi(\mathcal{O}_{\bar{S}})=L_A^2- \sum m_i(m_i-1), \quad
K_{\bar{S}}^2=2L_A^2-2 \sum (m_i-1)^2.
\end{equation}
Notice that the sums only involve the  non-negligible singularities of $D_A\simeq 2L_A$. The two equalities in \eqref{eq:sum-sing} together imply
\begin{equation} \label{eq:res.can} 
K_S^2 \geq K_{\bar{S}}^2 = 4 + 2 \sum (m_i-1).
\end{equation}

We are now ready to analyse in detail the case $K_S^2=8$.
\begin{proposition} \label{prop:branch-locus}
Let $S$ be a minimal surface with $p_g=q=2$ and $K_S^2=8$. Then $S$ contains no rational curves, in particular $K_S$ is ample.  Using the previous notation, if the Albanese map $\alpha \colon S \to A$ is a generically finite double cover then we are in one of the following cases$:$
\begin{itemize}
\item[$\boldsymbol{(I)}$] $D_A^2=32$ and $D_A$ has one ordinary singular point  of multiplicity $6$ and no other singularities$;$ 
\item[$\boldsymbol{(II)}$] $D_A^2=24$ and $D_A$ has two ordinary singular points  of multiplicity $4$ and no other singularities.
\end{itemize}
\end{proposition}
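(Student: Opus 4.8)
The plan is to let the canonical resolution formulae do all the work, regarding the multiset $\{m_i\}$ of multiplicities attached to the non-negligible singularities of $D_A$ as the only unknown. Putting $K_S^2=8$ into \eqref{eq:res.can} gives immediately $4+2\sum(m_i-1)\le 8$, that is $\sum(m_i-1)\le 2$; negligible points have $m_i=1$ and are invisible in the three sums of \eqref{eq:sum-sing}--\eqref{eq:res.can}. Since every non-negligible $m_i$ is at least $2$, only finitely many configurations remain, and for each of them $D_A^2=4L_A^2=4\big(2+\sum m_i(m_i-1)\big)$ is determined. The whole classification is then organised around a single geometric input, the statement that $S$ carries no rational curve, which I discuss last.

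Granting that $S$ has no rational curve, the case analysis is short. By Lemma \ref{lem:minimal}(b) there are no negligible singularities, and the same hypothesis kills every non-minimal singularity as well: the local computations preceding Lemma \ref{lem:minimal} show that a $[3,3]$-point produces a $(-1)$-curve whose contraction leaves a rational curve on $S$, a $[4,4]$-point produces two $(-2)$-curves, and an ordinary point of odd multiplicity $2m+1$ makes the exceptional divisor enter the reduced branch locus $D_B$ and meet its strict transform in $2m+1$ nodes, hence in as many $(-2)$-curves upstairs. Therefore every singularity of $D_A$ is a minimal, non-negligible, ordinary point of even multiplicity; by Lemma \ref{lem:minimal}(a) we get $S=\bar S$, and $8=K_S^2=K_{\bar S}^2=4+2\sum(m_i-1)$ forces $\sum(m_i-1)=2$. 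With all $m_i\ge 2$ this leaves exactly a single point with $m_i=3$, i.e. an ordinary sextuple point, or two points with $m_i=2$, i.e. two ordinary quadruple points; the exceptional curves are then a smooth genus-$2$ curve, respectively two smooth elliptic curves, all of self-intersection $-2$, confirming a posteriori that no rational curve is introduced. Evaluating $L_A^2=2+\sum m_i(m_i-1)$ gives $L_A^2=8$, $D_A^2=32$ in the first case and $L_A^2=6$, $D_A^2=24$ in the second, which are precisely $\boldsymbol{(I)}$ and $\boldsymbol{(II)}$; moreover $K_S$ is ample because there are no $(-2)$-curves.

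The crux, and the step I expect to be genuinely hard, is proving that $S$ has no rational curve, for this cannot be seen by the invariants: a stray node or ordinary triple point of $D_A$ is negligible and contributes $0$ to each sum in \eqref{eq:sum-sing}, so it leaves $K_S^2$, $\chi$ and $D_A^2$ unchanged while adding a $(-2)$-curve to $S$. Any proof must therefore be geometric. The starting point is that a rational curve maps to a point under $\alpha$, since an abelian surface contains no rational curve, so it is contracted by $c$ and lives over a singular point of $D_A$; it then remains to exclude the singularity types that produce such curves. I would attack this either through the covering involution of $\alpha\colon S\to A$, analysing its action on the exceptional configuration together with the constraint $D_A\simeq 2L_A$, or---exploiting that here $K_S^2=2\,e(S)=8$ lies on the Bogomolov line---through the (semi)stability of $\Omega^1_S$ tested against the destabilising conormal subsheaf $N^*_{C/S}$ attached to a $(-2)$-curve $C$. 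A complementary observation is that the very high multiplicity ($4$ or $6$) already imposes more conditions on $|2L_A|$ than $h^0$ allows, so $D_A$ is extremely rigid and has no room for an extra singular point; making this precise for the special abelian surfaces that actually occur is, I expect, where the real difficulty lies.
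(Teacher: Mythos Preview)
Your case analysis is essentially the paper's, and you are right that the whole argument pivots on the single geometric fact that $S$ carries no rational curve. The difference is that the paper does not attempt to prove this by hand: it simply invokes Miyaoka's general bound \cite[Proposition~2.1.1]{Miy} on the number of rational curves on a surface of general type, which in the borderline situation $c_1^2=2c_2$ (ours, since $K_S^2=8=2e(S)$) forces that number to be zero. Your proposed attacks via the covering involution or via the stability of $\Omega_S^1$ are in the right spirit---Miyaoka's argument itself goes through semistability of the cotangent bundle---but citing the finished result is both shorter and sharper than what you sketch; the alternative route through rigidity of $D_A$ in $|2L_A|$ would not, by itself, exclude an extra node.

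One small correction to your reduction. A $[3,3]$-point does \emph{not} leave a rational curve on $S$: the first exceptional $\mathsf E_j$ lies in the branch and pulls back to the rational $(-1)$-curve you mention, but the second exceptional $\mathsf E_i$ satisfies $D_B\cdot\mathsf E_i=4$, so its preimage in $\bar S$ is an elliptic $(-2)$-curve; after contracting the $(-1)$-curve this becomes an elliptic curve of self-intersection $-1$, and nothing rational remains. The $[3,3]$-configuration is still excluded, but by bookkeeping rather than geometry: it contributes $0+1$ to $\sum(m_i-1)$, so alone it gives $K_{\bar S}^2=6$ and hence $K_S^2=7$, while combined with any further non-negligible centre the sum is already $2$, $K_{\bar S}^2=8$, and the extra $(-1)$-curve would force $K_S^2\ge 9$. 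This is closer to how the paper actually runs the cases: rather than arguing a priori that every singularity is ordinary of even multiplicity, it lists the multisets $\{m_i\}$ compatible with $\sum(m_i-1)\le 2$, rules out the $[4,4]$-configuration via rational curves, and dispatches the solitary ordinary $m_1=2$ case because it is minimal and therefore gives $K_S^2=6$.
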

\begin{proof}
The non-existence of rational curves on $S$ is a consequence of a general bound for the number of rational curves on a surface of general type, see \cite[Proposition 2.1.1]{Miy}.  

Since $K_S^2=8$, inequality \eqref{eq:res.can} becomes
\begin{equation} \label{eq:res.can-1}
\sum(m_i - 1) \leq 2.
\end{equation}
By Lemma \ref{lem:minimal} there are no negligible singularities in $D_A$, so  \eqref{eq:res.can-1} implies that we have three possibilities:
\begin{itemize}
\item $D_A$ contains precisely one singularity (which is necessarily ordinary) and $m_1=3$, that is $d_1=6$; this is case $\boldsymbol{(I)}.$ 
\item $D_A$ contains precisely two singularities and $m_1=m_2=2$, that is $d_1=d_2=4$. We claim that these two quadruple points cannot be infinitely near. In fact, the canonical resolution of a $[4, \, 4]$-point implies that $\bar S$ contains (two) rational curves and, since a $[4, \, 4]$-point is a minimal singularity, this would imply the existence of rational curves on $S=\bar{S}$, a contradiction.
So we have two ordinary points of multiplicity $4$, and we obtain case  $\boldsymbol{(II)}.$ 
\item $D_A$ contains precisely one singularity (which is necessarily ordinary) and $m_1=2$, that is $d_1=4$. An ordinary singularity is minimal, hence we get equality in \eqref{eq:res.can}, obtaining $K_S^2=6$ (this situation is considered in \cite{PePol13b}), that is a contradiction.
\end{itemize}   
\end{proof}

\begin{remark} \label{rem:sing-Stein}
Lemma \ref{lem:minimal} and Proposition \ref{prop:branch-locus} imply that for any surface $S$ with $p_g=q=2$, $K_S^2=8$ and Albanese map of degree $2$, we have $\bar{S}=S$. Furthermore, referring to diagram \eqref{dia.alpha}, the following holds: 
\begin{itemize}
\item in case $\boldsymbol{(I)}$, the birational morphism $c \colon S \to X$ contracts precisely one smooth curve $Z$, such that $g(Z)=2$ and $Z^2=-2$. This means that the singular locus of $X$ consists of one isolated singularity $x$, whose geometric genus is $p_g(X, \, x)= \dim_{\mathbb{C}}R^1c_* \mathcal{O}_S = 2$;
\item in case $\boldsymbol{(II)}$, the  birational morphism $c \colon S \to X$ contracts precisely two disjoint elliptic curves $Z_1, \, Z_2$ such that $(Z_1)^2 = (Z_2)^2=-2$. This means that the singular locus of $X$ consists of two isolated elliptic singularities $x_1, \, x_2$ of type $\widetilde{E}_7$, see \cite[Theorem 7.6.4]{Is14}.
\end{itemize}
\end{remark}

\begin{definition} \label{def:typeI-II}
According to the dichotomy in Proposition \ref{prop:branch-locus}, we will use the terminology \emph{surfaces of type I} and \emph{surfaces of type II}, respectively.
\end{definition}

\begin{proposition} \label{prop:branch-type-II}
Let us denote as above by $D_A$ the branch locus of the Albanese map $\alpha \colon S \to A$. Then$:$
\begin{itemize}
\item if $S$ is of type $I,$ the curve $D_A$ is irreducible$;$
\item if $S$ is of type $II,$ the curve $D_A$ is of the form
$D_A = E_1+E_2+E_3+E_4,$
where the $E_i$ are elliptic curves meeting pairwise transversally at two points $p_1$, $p_2$  and not elsewhere. In particular, we have $E_iE_j=2$ for $i \neq j.$ 
\end{itemize}
\end{proposition}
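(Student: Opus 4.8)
The plan is to handle both cases with a single toolkit coming from the geometry of curves on the abelian surface $A$, resting on three facts: that $A$, being abelian, contains no rational curves (so every irreducible curve has geometric genus $\geq 1$, and a component of geometric genus $1$ is automatically a \emph{smooth} elliptic curve, being a translate of an elliptic subgroup); the adjunction formula $C^2 = 2p_a(C)-2$, valid on $A$ since $K_A=0$; and, crucially, the $2$-divisibility $D_A \simeq 2L_A$ recorded after diagram \eqref{dia.alpha}, which forces $D_A \cdot C = 2(L_A\cdot C)$ to be \emph{even} for every curve $C$.

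First I would record the local bookkeeping. Since $D_A$ is smooth away from its ordinary singular point(s), any irreducible component $C_i$ is smooth off those points and meets the other components only there and transversally; writing $a_i$ (and $b_i$, in case $\boldsymbol{(II)}$) for the multiplicity of $C_i$ at the singular point(s), one gets $C_i\cdot C_j = a_i a_j$ (resp.\ $a_i a_j + b_i b_j$) and $p_a(C_i)=g(\tilde C_i)+\binom{a_i}{2}$ (resp.\ $+\binom{b_i}{2}$), where $\tilde C_i$ is the normalization. Feeding these into the additivity $p_a(D_A)-1 = \sum_i(p_a(C_i)-1) + \sum_{i<j}C_i\cdot C_j$ and using $p_a(D_A)=\tfrac12 D_A^2+1$, the multiplicity terms cancel and one is left with the clean identity $\sum_i\bigl(g(\tilde C_i)-1\bigr)=1$ in case $\boldsymbol{(I)}$ and $\sum_i\bigl(g(\tilde C_i)-1\bigr)=0$ in case $\boldsymbol{(II)}$.

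In case $\boldsymbol{(II)}$ this forces every component to be elliptic, hence smooth, so each $a_i,b_i\in\{0,1\}$ with $\sum a_i=\sum b_i=4$; the parity of $D_A\cdot C_i = a_i(4-a_i)+b_i(4-b_i)$ then excludes the mixed types $(a_i,b_i)=(1,0),(0,1)$, leaving exactly four curves passing through both points, so that $E_iE_j=1+1=2$, together with possibly some components disjoint from the rest. In case $\boldsymbol{(I)}$ the identity instead yields exactly one component of geometric genus $2$ and the remaining ones elliptic; for the elliptic ones $a_i\in\{0,1\}$, and since $D_A\cdot C_i = a_i(6-a_i)$ is odd when $a_i=1$, the $2$-divisibility forces $a_i=0$, i.e.\ every putative elliptic component is disjoint from the genus-$2$ curve. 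The exclusion of irreducibility in case $\boldsymbol{(II)}$ is the easy converse of the genus identity: an irreducible $D_A$ would have a genus-$1$ normalization, whose image in $A$ is a smooth elliptic curve, contradicting the two quadruple points.

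The remaining, and main, obstacle in both cases is to eliminate these leftover disjoint components, which the numerics above do not rule out. Here I would invoke the Hodge index theorem on $A$: the surviving core divisor (the sum $E_1+E_2+E_3+E_4$ in case $\boldsymbol{(II)}$, of square $24$; the genus-$2$ curve in case $\boldsymbol{(I)}$, of square $32$) has positive self-intersection, whereas any disjoint component $E$ satisfies $E^2=0$ and meets that core in $0$. Since a divisor orthogonal to a class of positive square has non-positive square, with equality only when it is numerically trivial, we would conclude $E\equiv 0$, contradicting that $E$ is an effective nonzero curve. This forces $D_A=E_1+E_2+E_3+E_4$ with the stated incidence in case $\boldsymbol{(II)}$, and $D_A$ irreducible in case $\boldsymbol{(I)}$, completing the argument.
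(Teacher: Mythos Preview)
Your argument is correct and rests on the same core computation as the paper: the genus identity $\sum_i(g(\tilde C_i)-1)$ equals $1$ in case $\boldsymbol{(I)}$ and $0$ in case $\boldsymbol{(II)}$, obtained from adjunction on $A$ together with the fact that $A$ carries no rational curves (so every irreducible component has geometric genus at least $1$, and genus-$1$ components are automatically smooth).

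The packaging, however, differs in two respects. First, the paper treats case $\boldsymbol{(I)}$ on the blow-up $B$: it observes that the strict transform $\sum C_i=\varphi^*D_A-6\mathcal E$ is $2$-divisible in $\mathrm{Pic}(B)$, hence each $C_i^2$ is even, which immediately kills genus-$1$ components through the sextuple point (they would have $C_i^2=-1$). You instead stay on $A$ and use the parity of $D_A\cdot C_i$; this is the same $2$-divisibility, just read on $A$ rather than on $B$. Second, in case $\boldsymbol{(II)}$ the paper, after obtaining that all components are smooth elliptic, simply asserts that they pass through both $p_1$ and $p_2$ and concludes $r=4$; your parity computation $D_A\cdot C_i=a_i(4-a_i)+b_i(4-b_i)$ makes this step explicit by excluding the mixed types $(1,0)$ and $(0,1)$.

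The genuinely new ingredient in your write-up is the Hodge index step eliminating ``stray'' elliptic components disjoint from the rest. The paper does not spell this out; in case $\boldsymbol{(I)}$ it tacitly assumes $p\in D_i$ for every component, and in case $\boldsymbol{(II)}$ it is equally brief. Your use of Hodge index---the core divisor has positive self-intersection, while a disjoint elliptic component has square zero and is orthogonal to it, hence numerically trivial---is a clean and legitimate way to close this gap, and makes the argument self-contained.
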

\begin{proof}
Suppose first that $S$ is of type $I$ and consider the blow-up $\varphi \colon B \to A$ at the singular point $p \in D_A$. Let $C_1,\ldots,C_r$ be the irreducible components of the strict transform of $D_A$ and $\mathcal{E} \subset B$ the exceptional divisor.
The curve $D_A$ only contains the ordinary singularity $p$, so the $C_i$ are pairwise disjoint; moreover, the fact that 
\begin{equation*}
\sum_{i=1}^r C_i =\varphi^*D_A- 6 \mathcal{E} 
\end{equation*}
is  $2$-divisible in $\mathrm{Pic}(B)$ 
implies that $C_i^2=C_i \big( \sum_{i=1}^r C_i \big) $ is an even integer.
Let us recall now that the abelian surface $A$ contains no rational curves, so $g(C_i) >0$ for all $i \in \{1, \ldots, r\}$. On the other hand, if $g(C_i)=1$ then its image $D_i:=\varphi(C_i)$ is a smooth elliptic curve, because it is a curve of geometric genus $1$ on the abelian surface $A$. Thus $D_i^2=0$ and, since $p \in D_i$, it follows $C_i^2=-1$, a contradiction because $-1$ is an odd integer. So we infer $g(C_i)\geq 2$ for all $i \in \{1, \ldots, r\}$  and we can write
\begin{equation*}
\begin{split}
6-4 & = \mathcal{E}(\varphi^* D_A - 6 \mathcal{E}) + (\varphi^*D_A - 6 \mathcal{E})^2 \\ & =  K_B \bigg(\sum_{i=1}^r C_i \bigg) + \bigg( \sum_{i=1}^r C_i \bigg)^2 =  \sum_{i=1}^r (2 g(C_i)- 2) \geq 2r,
\end{split}
\end{equation*}
that is $r=1$ and $D_A$ is irreducible.

Assume now that $S$ is of type $II$, and write $D_A = E_1 + \cdots + E_r,$ where each $E_i$ is an irreducible curve. Denote by $m_i$ and $n_i$ the multiplicities of $E_i$ at the two ordinary singular points $p_1$ and $p_2$ of $D_A$, and let $p_a(E_i)$ and $g_i$ be the arithmetic and the geometric genus of $E_i$, respectively.
We have
$\sum_{i=1}^r m_i = \sum_{i=1}^r n_i =4$
and
\begin{equation*}  \label{eq:Ci-2}
E_i^2 = 2p_a(E_i)-2 = 2g_i -2 + m_i(m_i-1)+n_i(n_i-1).
\end{equation*}
Using this, we can write
\begin{equation*}
\begin{split}
24 & = D_A^2 = \sum_{i=1}^r E_i^2 + 2 \sum_{j<k} E_j E_k \\
& = 2 \sum_{i=1}^r g_i - 2r + \sum_{i=1}^r m_i(m_i-1) + \sum_{i=1}^r n_i(n_i-1) + 2 \sum_{j<k} (m_j m_k + n_j n_k) \\
& = 2 \sum_{i=1}^r g_i - 2r  + \bigg(\sum_{i=1}^r m_i \bigg)^2 + \bigg(\sum_{i=1}^r n_i \bigg)^2 - \sum_{i=1}^r m_i  - \sum_{i=1}^r n_i \\
& = 2 \sum_{i=1}^r g_i - 2r + 24, 
\end{split}
\end{equation*}
that is 
$\sum_{i=1}^r g_i = r.$
Since $A$ contains no rational curves we have $g_i \geq 1$, and we conclude that 
\begin{equation} \label{eq:Ci-4}
g_1 =  \cdots = g_r=1.
\end{equation}
But every curve of geometric genus $1$ on $A$ is smooth, so \eqref{eq:Ci-4} implies that $D_A$ is the sum of $r$ elliptic curves $E_i$ passing through the singular points $p_1$ and $p_2$. Therefore $r=4$, because these points have multiplicity $4$ in the branch locus $D_A$.
\end{proof}

\section{Surfaces of type $I$} \label{sec:type I}

\subsection{The product-quotient examples} \label{2sec:type I}
The following family of examples, whose construction can be found in \cite{Pe11}, shows that surfaces of type $I$ do actually exist. Let $C'$ be a curve of genus $g(C') \geq 2$ and let $G$ be a finite group that acts freely on $C'\times C'$. We assume moreover that the action is \emph{mixed}, namely that there exists an element in $G$ exchanging the two factors; this means that
\begin{equation*}
G\subset {\rm Aut}(C' \times C') \simeq {\rm Aut}(C')^2 \rtimes\mathbb{Z}/2 \mathbb{Z}
\end{equation*}
is not contained in ${\rm Aut}(C')^2$. Then the quotient $S:=(C'\times C')/G$ is a smooth surface with 
\begin{equation} \label{eq:inv-prod-quot}
\chi(\mathcal{O}_S)=(g-1)^2/|G|, \quad K_S^2=8\chi (\mathcal{O}_S).
\end{equation}
The intersection $G^0:=G\,\cap\,{\rm Aut}(C')^2$ is an index $2$ subgroup of $G$ (whose action on $C'$ is independent on the factor).
From \cite[Theorem 3.6, Theorem 3.7 and Lemma 3.9]{Fr13}, the exact sequence
\begin{equation*}
1 \to G^0 \to G \to \mathbb{Z}/ 2 \mathbb{Z} \to 1
\end{equation*}
is non-split and the genus of the curve $C := C' /G^0$ equals $q(S)$.

We have a commutative diagram
\begin{equation*}
\begin{CD} 
C' \times C' @> t >> C\times C\\ @V VV  @VV u V\\ S@> \beta >> \textrm{Sym}^2(C),
\end{CD}
\end{equation*}
where $t \colon C' \times C' \to C \times C $ is a $(G^0\times G^0)$-cover, $u \colon C \times C \to \textrm{Sym}^2(C) $ is the natural projection onto the second symmetric product and $\beta \colon S \to \textrm{Sym}^2(C)$ is a finite cover of degree $|G^0|$.

Assume now that $C'$ has genus $3$ and that  $G^0 \simeq \mathbb{Z}/2 \mathbb{Z}$ (hence $G \simeq \mathbb{Z}/4 \mathbb{Z}$).
Since $G$ acts freely on $C'\times C',$ then $G^0$ acts freely on $C'$ and thus $C$ has genus $2$.
Denoting by $\Delta \subset C \times C$ the diagonal and by $\Gamma \subset C \times C$ the graph of the hyperelliptic involution $\iota \colon C \to C$, we see that $\Delta$ and $\Gamma$ are smooth curves isomorphic to $C$ and satisfying
\begin{equation*}
\Delta \Gamma =6, \quad \Delta^2 = \Gamma^2 = -2.
\end{equation*}
The ramification divisor of $u$ is precisely $\Delta$, so $u(\Delta)^2=-4$, whereas $u(\Gamma)$ is a
$(-1)$-curve. The corresponding blow-down morphism $\varphi \colon \textrm{Sym}^2(C) \to A$ is the Abel-Jacobi map, and $A$ is an abelian surface isomorphic to the Jacobian variety $J(C)$. The composed map 
\begin{equation*}
\alpha= \varphi \circ \beta \colon S \to A  
\end{equation*}  
is a generically finite double cover, that by the universal property coincides, up to automorphisms of $A$, with the Albanese morphism of $S$. Such a morphism is branched over $D_A := (\varphi \circ u)(\Delta)$, which is a curve with $D_A^2=32$ and containing an ordinary sextuple point and no other singularities: in fact, the curves $u(\Delta)$ and $u(\Gamma)$ intersect transversally at precisely six points, corresponding to the six Weierstrass points of $C$.

From this and \eqref{eq:inv-prod-quot}, it follows that $S$ is a surface with $p_g=q=2$, $K_S^2=8$ and of type $I$. Note that, with the notation of Section \ref{sec:Albanese}, we have $B= \textrm{Sym}^2(C)$ and $D_B = u(\Delta)$.

\begin{remark} \label{rem:curve-6-ple}
Here is a different construction of the singular curve $D_A$ considered in the previous example. Let $A:=J(C)$ be the Jacobian of a smooth genus $2$ curve and let us consider a symmetric theta divisor $\Theta \subset A$. Then the Weierstrass points of  $\Theta$ are six $2$-torsion points of $A$, say $p_0, \ldots, p_{5}$, and $D_A$ arises as the image of $ \Theta $ via the multiplication  map $2_A \colon A \to A$ given by $x \mapsto 2x$. Note that $D_A$ is numerically equivalent to $4 \Theta$.
\end{remark}

\begin{remark} \label{rem:pign-pol}
Recently, R. Pignatelli and the first author studied some surfaces with $p_g=q=2$ and $K_S^2=7$, originally constructed in   \cite{CanFr15} and arising as \emph{triple} covers $S \to A$ branched over $D_A$, where $(A, \, D_A)$ is as in the previous example. We refer the reader to \cite{PiPol16} for more details.  
\end{remark}

\subsection{The classification} \label{subsec:classification I}

The aim of this subsection is to show that every surface of type $I$ is a product-quotient surface of the type described in Subsection \ref{2sec:type I}.

\begin{lemma}\label{symmetric}
Let $D$ be an irreducible curve contained in an abelian surface $A$, with $D^{2}=32$ and having an ordinary point $p$ of multiplicity $6$ and no other singularities. 
Then, up to translations, we can suppose $p=0$ and $D$ symmetric, namely $(-1)_A^* D=D$. 
\end{lemma}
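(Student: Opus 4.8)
The plan is to reduce the statement to the assertion that, once the sextuple point has been moved to the origin, the curve is automatically symmetric; the freedom of translation is used only to place $p$ at $0$. So first I would replace $D$ by $t_p^*D$, so that from now on the ordinary sextuple point sits at $0 \in A$, and it then suffices to prove the single equality $(-1)_A^* D = D$.

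The key input is the geometric genus of $D$. Since $K_A=0$, adjunction gives $p_a(D)=1+\tfrac12 D^2=17$, and an ordinary point of multiplicity $6$ lowers the genus by $\binom{6}{2}=15$; as $D$ has no other singularities, its normalization $\tilde D$ is a smooth curve of genus $2$. Let $\nu\colon \tilde D\to A$ be the composition of the normalization with the inclusion $D\hookrightarrow A$; it is birational onto $D$, and the six branches at $0$ correspond to six distinct points of $\tilde D$ lying over $0$. Being of genus $2$, the curve $\tilde D$ is hyperelliptic; I denote by $\iota$ its (canonical) hyperelliptic involution and fix a Weierstrass point $w\in \tilde D$.

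Next I would factor $\nu$ through the Jacobian. By the universal property of the Albanese variety of $\tilde D$, which is its Jacobian $J(\tilde D)$, one can write $\nu=f\circ u_w+\nu(w)$, where $u_w\colon \tilde D\to J(\tilde D)$ is the Abel--Jacobi map based at $w$ and $f\colon J(\tilde D)\to A$ is a homomorphism of abelian surfaces. The defining property of genus-$2$ curves, namely $x+\iota x\simeq 2w$ for every $x$, yields $u_w(\iota x)=-u_w(x)$, and since $f$ is a homomorphism it commutes with $(-1)$; hence
\[
\nu(\iota x)=f(-u_w(x))+\nu(w)=-\nu(x)+2\nu(w)\qquad\text{for all }x\in\tilde D,
\]
that is, $\nu\circ\iota=t_{2\nu(w)}\circ(-1)_A\circ\nu$. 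Passing to images and using that $\iota$ is an automorphism of $\tilde D$ (so $\nu\circ\iota$ and $\nu$ have the same image $D$), I obtain the equality of curves $D=(-1)_A^*D+2\nu(w)$.

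It remains to show that the translation constant vanishes, and here the sextuple point enters decisively: $D$ has a unique singular point, namely $0$, whereas $(-1)_A^*D+2\nu(w)$ has its unique singular point at $-0+2\nu(w)=2\nu(w)$. Comparing the two forces $2\nu(w)=0$, whence $D=(-1)_A^*D$, as desired. I expect the main obstacle to be the clean identification of how the hyperelliptic involution $\iota$ interacts with $(-1)_A$ through the factorization $\nu=f\circ u_w+\nu(w)$; once $\nu\circ\iota=t_{2\nu(w)}\circ(-1)_A\circ\nu$ is in place, the conclusion follows formally from the uniqueness of the singular point. One should also dispatch the routine points that $\nu$ is genuinely birational onto its image and that the six preimages of $0$ are distinct, so that the singularity is ordinary of multiplicity exactly $6$.
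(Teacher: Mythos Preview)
Your argument is correct, but it takes a genuinely different route from the paper's proof. The paper argues in two lines: since $(-1)_A^*$ acts trivially on $\mathrm{NS}(A)$, the curve $D':=(-1)_A^*D$ is algebraically equivalent to $D$ and also has an ordinary sextuple point at $0$; if $D\neq D'$ then (being distinct irreducible curves) their intersection number satisfies $D\cdot D'\geq 6\cdot 6=36$ from the local contribution at $0$, contradicting $D\cdot D'=D^2=32$. This is a pure intersection-theoretic trick and needs no structure theory of the curve $D$.

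Your approach instead computes the geometric genus, normalizes to a hyperelliptic curve, factors through the Jacobian, and uses the hyperelliptic involution to exhibit $(-1)_A^*D$ explicitly as a translate of $D$, then pins down the translation via the unique singular point. This is longer but more constructive, and it has the virtue of anticipating exactly the Jacobian factorization that the paper sets up anyway in the very next proposition (Proposition~\ref{prop:curve-mult-2}); in effect you are front-loading part of that argument. Your concern about the ``main obstacle'' is unfounded: the identity $\nu\circ\iota=t_{2\nu(w)}\circ(-1)_A\circ\nu$ follows formally from $u_w\circ\iota=-u_w$ (a consequence of $x+\iota x\sim 2w$) together with the fact that $f$ is a homomorphism, and the routine points you flag at the end are immediate from the hypotheses.
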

\begin{proof}
Up to a translation, we may assume $p=0$. Using the results of Subsection \ref{subsec:AbelVar} and \cite[Corollary 2.3.7]{BL04}, it follows that $(-1)_A^*$ acts trivially on $\mathrm{NS}(A)$, hence 
 $D$ and $D':=(-1)_A^*D$ are two algebraically equivalent, irreducible divisors, both having a sextuple point at $0$. If $D$ and $D'$ were distinct, we would have
$D D' \geq 36$, a contradiction because $D^2=32$; thus $D=D'$. 
\end{proof}

\begin{proposition} \label{prop:curve-mult-2}
If $D \subset A$ is as in $\mathrm{Lemma}$ $\mathrm{\ref{symmetric}}$, then there exists a smooth genus $2$ curve $C$ such that $A=J(C)$. Furthermore, up to translations, the curve $D$ can be obtained as in $\mathrm{Remark}$ $\mathrm{\ref{rem:curve-6-ple}}$, namely as the image of a symmetric theta divisor $\Theta \subset A$ via the multiplication map $2_A \colon A \to A$. 
\end{proposition}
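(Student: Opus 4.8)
The plan is to recover the genus $2$ curve $C$ as the normalisation of $D$, and then to show that the isogeny $J(C)\to A$ induced by the map $C\to A$ is, up to an isomorphism, multiplication by $2$.

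First I would blow up the ordinary sextuple point $0\in D$, exactly as in the proof of Proposition \ref{prop:branch-type-II}: the strict transform $C:=\tilde D$ has class $\varphi^*D-6\mathcal{E}$, with $C^2=-4$ and $K_BC=6$, hence $g(C)=2$; since $D$ has only the ordinary sextuple point, $C$ is smooth of genus $2$ and the normalisation $\nu\colon C\to D\subset A$ has geometric genus $2$. By Lemma \ref{symmetric} the sextuple point is at $0$ and $D$ is symmetric; taking as base point of the Albanese map one of the six preimages of $0$, the universal property factors $\nu$ as $C\xrightarrow{\mathrm{alb}}J(C)\xrightarrow{f}A$ with $f$ a group homomorphism. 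Writing $\Theta:=\mathrm{alb}(C)\cong C$ for the associated symmetric theta divisor, one has $f|_\Theta=\nu$ and $f(\Theta)=D$. As $D$ is irreducible of geometric genus $2$, it cannot lie in a translate of an elliptic curve, so $\mathrm{im}\,f$ is two-dimensional and $f$ is an isogeny.

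Next I would pin down the degree. Since $f|_\Theta$ is birational onto $D$ we have $f_*\Theta=D$, and the push--pull formula for isogenies gives $f^*D=f^*f_*\Theta=\sum_{k\in\ker f}t_k^*\Theta\equiv(\deg f)\,\Theta$ in $\NS(J(C))$. Comparing self-intersections, $2(\deg f)^2=(f^*D)^2=\deg f\cdot D^2=32\deg f$, so $\deg f=16=|\,J(C)[2]\,|$. The whole statement then reduces to the single identity $\ker f=J(C)[2]$: granting it, $f$ and $2_{J(C)}$ have the same kernel, hence there is an isomorphism $g\colon J(C)\xrightarrow{\ \sim\ }A$ with $f=g\circ 2_{J(C)}$. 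This yields $A\cong J(C)$, and since $g$ is a homomorphism, $D=f(\Theta)=g\big(2_{J(C)}(\Theta)\big)=2_A\big(g(\Theta)\big)$, so $D$ is the image under $2_A$ of the symmetric theta divisor $\Theta_A:=g(\Theta)$, exactly as in Remark \ref{rem:curve-6-ple}.

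The hard part, and the step I expect to be the main obstacle, is the equality $\ker f=J(C)[2]$, where the symmetry of $D$ is essential. Because the sextuple point is ordinary, $D$ has six smooth branches at $0$ with distinct tangent lines; as $(-1)_A$ fixes $0$ and acts as $-\mathrm{id}$ on $T_0A$, it acts trivially on $\PP(T_0A)$ and therefore preserves each tangent line, hence maps each branch to itself. Consequently the involution $\iota$ of $C$ induced by $(-1)_A$ (which corresponds to the restriction of $(-1)_{J(C)}$ to $\Theta$, i.e.\ to the hyperelliptic involution) fixes each of the six points $q_1,\dots,q_6\in C$ lying over $0$, so the $q_i$ are Weierstrass points and $\mathrm{alb}(q_1),\dots,\mathrm{alb}(q_6)$ are six distinct $2$-torsion points of $J(C)$ contained in $\ker f$. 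By the classical theory of the genus $2$ Jacobian their pairwise sums $\mathrm{alb}(q_i)+\mathrm{alb}(q_j)$ exhaust the fifteen nonzero points of $J(C)[2]$, so these six points generate $J(C)[2]$; hence $J(C)[2]\subseteq\ker f$, and equality follows by comparing orders. The remaining assertions are then formal, as explained above.
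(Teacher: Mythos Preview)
Your proof is correct and follows essentially the same route as the paper's: normalise $D$ to get a genus $2$ curve $C$, factor through $J(C)$ to obtain an isogeny of degree $16$, then use the symmetry of $D$ and the branch analysis at $0$ to identify the six preimages of $0$ as Weierstrass points, whence $\ker f \supseteq J(C)[2]$ and equality by counting. One small presentational slip: you call $\Theta$ a \emph{symmetric} theta divisor before you have shown that the chosen base point is a Weierstrass point; the symmetry only follows once you know this, but since you do not actually use the symmetry of $\Theta$ until the end, the logic is unaffected.
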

\begin{proof}
By Lemma \ref{symmetric}, we can assume that $D$ is a symmetric divisor and that its sextuple point is the origin $0 \in A$. The geometric genus of $D$ is $2$, hence its normalization $C \to D$ is a smooth genus $2$ curve.
By the universal property of the Jacobian, the composed map $C \to D \hookrightarrow A$ factors through an isogeny 
\begin{equation*}
\eta \colon J(C)\to A,
\end{equation*}
where we can assume, up to translations, that the image $\Theta$ of the embedding $C \hookrightarrow J(C)$ is a theta divisor containing the origin $0 \in J(C)$. Thus, the abelian surface $A$ is isomorphic to $J(C)/T$, where $T:= \ker \eta$ is a torsion subgroup whose order $|T|$ equals the degree $d$ of $\eta$. 
The group  $T$ contains the group generated by the six points 
\begin{equation*}
0=p_0, \, p_1, \ldots, p_5
\end{equation*}
corresponding to the six distinct points of $C$ over $0 \in D$. 
The restriction of $\eta$ to $C$ is birational,
so we have
\begin{equation*}
\eta^{*}D=\Theta_{0}+\dots+\Theta_{d-1},
\end{equation*}
where $\Theta_{0}= \Theta $ and the $\Theta_{j}$ are translates of $\Theta_0$  by the elements of $T$.
Since $D^{2}=32$, we obtain $(\eta^{*}D)^{2}=32d$. On the other hand, all the curves $\Theta_j$ are algebraically equivalent, hence $\Theta_i \Theta_j=2$ for all pairs $(i, \, j)$ and we infer $(\eta^{*}D)^{2} = (\Theta_0 + \cdots + \Theta_{d-1})^2=2d^2$. So $32d=2d^2$, that is $d=16$. 

This shows that the reducible curve $\eta^*D$ has sixteen sextuple points  $p_0, \ldots, p_{15}$, such that every curve $\Theta_j$ contains six of them; conversely, since all the $\Theta_j$ are smooth, through any of the $p_k$ pass exactly six curves. We express these facts by saying that the sixteen curves $\Theta_j$ and the sixteen points $p_k$ form a \emph{symmetric} $(16_6)$-\emph{configuration}.   
The involution $(-1)_A$ acts on $D$, so the involution $(-1)_{J(C)}$ acts on $\Theta$, that is $\Theta$ is a symmetric divisor on $J(C)$. Furthermore, the action of $(-1)_A$ induces the multiplication by $-1$ on the tangent space $T_{A, 0}$, hence it preserves the six tangent directions of $D$ at $0$; this means that $p_0, \ldots, p_{5}$ are fixed points for the restriction of $(-1)_{J(C)}$ to $\Theta$. But a non-trivial involution with six fixed points on a smooth curve of genus $2$ must be the hyperelliptic involution, so $p_0, \ldots, p_{5}$ are the Weierstrass points of $\Theta$. By \cite[Chapter 3.2, pp. 28–-39]{Mu84}, these six points generate the (order $16$) subgroup $J(C)[2]$ of points of order $2$ in $J(C)$, thus $T=J(C)[2]$.

Summing up, our symmetric $(16_6)$-configuration coincides with the so-called \emph{Kummer configuration}, see 
\cite[Chapter 10]{BL04};
 moreover, $A$ is isomorphic to $J(C)$ and the map $\eta$ coincides with the multiplication map $2_A \colon A \to A$.   
\end{proof}

\begin{theorem} \label{thm:class-type I}
Surfaces of type $I$ are precisely the product-quotient surfaces described in Section $\ref{2sec:type I}$, in particular they form a family of dimension $3$. More precisely, denoting by $\mathcal{M}_I$ their Gieseker moduli space and by $\mathcal{M}_2$ the moduli space of curves of genus $2$, there exists a surjective,  quasi-finite morphism $\mathcal{M}_I \to \mathcal{M}_2$ of degree $15$.
\end{theorem}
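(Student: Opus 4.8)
The plan is to exhibit the morphism $\mathcal{M}_I \to \mathcal{M}_2$ concretely, show that it is surjective and quasi-finite, and then compute its degree by counting, over a fixed general genus $2$ curve $C$, the surfaces of type $I$ whose Albanese variety is $J(C)$.

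First I would set up the morphism. Given a surface $S$ of type $I$, Proposition \ref{prop:curve-mult-2} yields a smooth genus $2$ curve $C$ with $A = \mathrm{Alb}(S) \cong J(C)$ and identifies the branch divisor $D_A$, up to translation, with the image $2_A(\Theta)$ of a symmetric theta divisor. Since $2_A \circ t_\tau = 2_A$ for every $\tau \in A[2]$, the divisor $D_A$ depends only on $C$, so the assignment $[S] \mapsto [C]$ is well defined and gives the map $\mathcal{M}_I \to \mathcal{M}_2$. Surjectivity is immediate from Section \ref{2sec:type I}: starting from an arbitrary genus $2$ curve one builds a product-quotient surface of type $I$ with associated curve $C$. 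As $\dim \mathcal{M}_2 = 3$, once the fibres are shown to be finite it follows that $\mathcal{M}_I$ is $3$-dimensional and that the morphism is quasi-finite.

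Next I would prove that every surface of type $I$ is a product-quotient surface, which is what identifies the two families. The Albanese map $\alpha \colon S \to A$ is the double cover attached to $D_A$ together with a square root $L_A$ of $\mathcal{O}_A(D_A)$ in $\Pic(A)$. Blowing up the sextuple point of $D_A$ realises the canonical resolution diagram of Section \ref{sec:Albanese} with $B = \mathrm{Sym}^2(C)$ (the Abel-Jacobi map $\mathrm{Sym}^2(C) \to J(C)$ being exactly the contraction of the exceptional $(-1)$-curve) and with smooth branch divisor $D_B$, precisely as in Section \ref{2sec:type I}. Pulling the double cover $S \to \mathrm{Sym}^2(C)$ back along $C' \times C' \to C \times C \to \mathrm{Sym}^2(C)$ then displays $S$ as a quotient $(C' \times C')/G$ by a mixed $\ZZ/4\ZZ$-action, where $C' \to C$ is the connected \'etale double cover attached to a nonzero $2$-torsion point $\epsilon \in J(C)[2]$; the results of \cite{Fr13} ensure that the non-split extension $1 \to G^0 \to G \to \ZZ/2\ZZ \to 1$, and hence $S$, is recovered from $\epsilon$.

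Finally I would compute the degree by counting the fibre over a general $C$. Through the identification above, surfaces of type $I$ with Albanese $J(C)$ correspond to connected \'etale double covers $C' \to C$, hence to the nonzero $2$-torsion points of $J(C)$, of which there are $2^4 - 1 = 15$; the zero element produces the disconnected cover $C \sqcup C$, which cannot serve as the genus $3$ curve $C'$ and is therefore discarded. To see that the fifteen resulting surfaces are pairwise non-isomorphic I would use that, for general $C$, the only affine automorphisms of $J(C)$ preserving $D_A$ are $\pm 1_A$, and that $(-1)_A$ fixes every point of $J(C)[2]$; thus each surface is carried to itself and no identifications occur, giving a fibre of cardinality $15$ and degree $15$ (the irreducibility of $\mathcal{M}_I$ then follows from the transitivity of the monodromy of $\mathcal{M}_2$ on the nonzero $2$-torsion). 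Equivalently, in terms of $\alpha$, the sixteen square roots of $\mathcal{O}_A(D_A)$ form a torsor under $A[2]$ and all but one of them give these fifteen surfaces. I expect this count to be the main obstacle: the delicate points are to reconcile the square-root description of $\alpha$ with the $2$-torsion parametrisation, to show that exactly one choice must be excluded, and to rule out isomorphisms between the remaining surfaces through the analysis of the automorphisms preserving $D_A$ and of the $(-1)_A$-action on square roots.
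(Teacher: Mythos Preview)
Your approach is essentially that of the paper. Both arguments pass to the canonical resolution $\beta \colon S \to B = \mathrm{Sym}^2(C)$ branched over the smooth diagonal $D_B$, identify the fibre over a general $[C] \in \mathcal{M}_2$ with the set of square roots of $\mathcal{O}_B(D_B)$ minus the one producing $u \colon C \times C \to B$, and use that for general $C$ the only group automorphism of $J(C)$ is $(-1)_A$, which acts trivially on these square roots, to conclude that the remaining fifteen are pairwise non-isomorphic.

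The one genuine difference is how you link ``type I'' to ``product-quotient''. The paper does this by pure pigeonhole: over a general $C$ there are exactly fifteen pairwise non-isomorphic type-I surfaces, the product-quotient construction produces one for each of the fifteen nonzero $\epsilon \in J(C)[2]$, and these must therefore exhaust the list. Your middle paragraph instead sketches a direct geometric argument, pulling back along $C' \times C' \to C \times C \to B$. To make that precise you would first have to say which $\epsilon$ to attach to a given $S$ (namely the $2$-torsion class $L_B \otimes (L_B^0)^{-1}$, where $L_B^0$ is the square root realising $C \times C \to B$), and then check that the normalised fibre product of $S$ and $C \times C$ over $B$ is the \'etale double cover of $C \times C$ attached to the diagonal class $(\epsilon,\epsilon)$, hence a $\ZZ/2\ZZ$-quotient of $C' \times C'$. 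This can be carried out, but the paper's counting argument bypasses it, and since your final paragraph already contains that counting argument, the direct step is in fact superfluous.
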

\begin{proof}
Given any surface $S$ of type $I$, by Proposition \ref{prop:curve-mult-2} there exists a smooth curve $C$ of genus $2$ such that $S$ is the canonical desingularization of the double cover $\alpha \colon X \to A$,  where $A=J(C)$, branched over the singular curve $D_A$ described in the example of Section $\ref{2sec:type I}$ and in Remark \ref{rem:curve-6-ple}. Equivalently, $S$ arises as a double cover $\beta \colon S \to B$, where $B= \mathrm{Sym}^2(C)$, branched over the smooth diagonal divisor $D_B$. There are sixteen distinct covers, corresponding to the sixteen square roots of $D_B$ in $\mathrm{Pic}(B)$. One of them is the double cover $u \colon C \times C \to B$, whereas the others are fifteen surfaces $S$ with $p_g(S)=q(S)=2$ and Albanese variety isomorphic to $J(C)$. We claim that, for a general choice of $C$, such surfaces are pairwise non-isomorphic. In fact, let us consider two of them, say $S_i$ and $S_j$; then, if $S_i \stackrel{\simeq}{\longrightarrow} S_j$ is an isomorphism, by the universal property of the Albanese map there exists an automorphism of abelian varieties $J(C) \stackrel{\simeq}{\longrightarrow} J(C)$ that makes the following diagram commutative:  
\begin{equation*} 
\begin{CD}
S_i  @>{\simeq} >> S_j\\
@V{}VV  @VV { } V\\
J(C) @> {\simeq}>> J(C).\\
\end{CD}
\end{equation*}
If $C$ is general, the only automorphism of $C$ is the hyperelliptic involution, so the only automorphism of $J(C)$ is the multiplication by $(-1)$, that acts trivially on the $2$-torsion divisors of $J(C)$. Consequently, the induced involution on $B$ acts trivially on the sixteen square roots of $D_B$, that is $S_i=S_j$, as claimed.

On the other hand, once fixed a curve $C$ of genus $2$, the product-quotient construction uniquely depends on the choice of the \'etale double cover $C' \to C$, that is on the choice of a non-trivial $2$-torsion element of $J(C)$. There are precisely fifteen such elements, that necessarily correspond to the fifteen surfaces with $p_g(S)=q(S)=2$ and $\textrm{Alb}(S) \simeq J(C)$ found above. 

Therefore every surface of type $I$ is a product-quotient example, and the map $\mathcal{M}_I \to \mathcal{M}_2$ defined by $[S] \mapsto [C]$ is a quasi-finite morphism of degree $15$.
\end{proof}

\begin{remark}
The moduli space of genus $2$ curves $C$ with a non-trivial $2$ torsion point in $J(C)$ is rational (see \cite{Do08}). According to the description of $\mathcal{M}_I$ in the proof of Theorem \ref{thm:class-type I}, we see that $\mathcal{M}_I$ is rational.
\end{remark}

Theorem \ref{thm:class-type I} in particular implies that the universal cover of $S$ coincides with the universal cover of $C' \times C'$, so we obtain  
 \begin{corollary}  \label{cor:univ-I}
Let $S$ be a surface of type $I$ and $\widetilde{S} \to S$ its universal cover. Then $\widetilde{S}$ is biholomorphic to the bidisk $\mathbb{H} \times \mathbb{H}$, where   $\mathbb{H} = \{z \in \mathbb{C} \; | \; \mathrm{Im} \, z >0 \}$ is the Poincar\'e upper half-plane.  
\end{corollary}

\section{Surfaces of type $II$: construction} \label{sec:type II}

\subsection{Line bundles on abelian varieties and the Appell-Humbert theorem} \label{subsec:AbelVar}

In this subsection we shortly collect some results on abelian varieties that will be used in the sequel, referring the reader to \cite[Chapters 1-4]{BL04} for more details.
Let $A=V/\Lambda$ be an abelian variety, where $V$ is a finite-dimensional $\CC$-vector space and  $\Lambda \subset V$ a lattice. Then the \emph{Appell-Humbert Theorem}, see \cite[Theorem 2.2.3]{BL04}, implies that
\begin{itemize}
\item the N\'eron-Severi group $\mathrm{NS}(A)$ can be identified with the group of hermitian forms $h \colon V \times V \to \mathbb{C}$ whose imaginary part $\mathrm{Im}\, h$ takes integral values on $\Lambda$; 
\item the Picard group $\mathrm{Pic}(A)$ can be identified with the group of pairs $(h, \, \chi)$, where $h \in \mathrm{NS}(A)$ and $\chi$ is a \emph{semicharacter}, namely a map
\begin{equation*}
\chi \colon \Lambda \to U(1), \quad \textrm{where } U(1)= \{z \in \mathbb{C} \; | \; |z|=1 \},
\end{equation*} 
 such that 
\begin{equation}\label{eq:semichar formula}
\chi(\l+\mu)=\chi(\l)\chi(\mu)e^{ \pi i \, \mathrm{Im} \, h(\l, \, \mu)} \quad \textrm{for all } \l, \, \mu \in \Lambda.
\end{equation}
\item with these identifications, the first Chern class map $c_1 \colon \mathrm{Pic}(A)\to \mathrm{NS}(A)$ is nothing but the projection to the first component, i.e. $(h, \, \chi) \mapsto h$.
\end{itemize}
We will write $\mathcal{L}= \mathcal{L}(h, \, \chi)$, so that we have $\mathcal{L}(h, \, \chi)\otimes \mathcal{L}(h', \, \chi')= \mathcal{L}(h+h', \, \chi\chi')$. The line bundle $\mathcal{L}(h, \, \chi)$ is symmetric if and only if the semicharacter $\chi$ has values in $\{\pm1\}$, see \cite[Corollary 2.3.7]{BL04}.
Furthermore, for any $\bar{v}\in A$ with
representative $v\in V$, we have 
\begin{equation} \label{eq:translation-A}
t_{\bar{v}}^{*}\LL(h,\, \chi)=\LL(h, \, \chi \, e^{2\pi i\,\mathrm{Im}\,h(v, \, \cdot)}),
\end{equation}
see \cite[Lemma 2.3.2]{BL04}.

\begin{remark} \label{rem:twice}
Assume that the class of $\mathcal{L}=\mathcal{L}(h, \, \chi)$ is $2$-divisible in 
$\mathrm{NS}(A)$, that is $h=2 h'$. Then $\mathrm{Im}\, h(	\Lambda,\, \Lambda) \subseteq 2 \mathbb{Z}$ 
and moreover formula (\ref{eq:semichar formula}) implies that $\chi  \colon \Lambda \to U(1)$ is a character, namely $\chi(\l + \mu) = \chi(\lambda) \chi(\mu)$. In particular, $\mathcal{L}$ belongs to $\mathrm{Pic}^0(A)$ if and only if there exists a character $\chi$ such that $\LL = \LL(0, \, \chi)$. 
\end{remark}

\begin{proposition}[\cite{BL04}, Lemma 2.3.4] \label{Prop:BirLangeEssential} 
Let $A_1 = V_1/ \Lambda_1$ and $A_2 = V_2/ \Lambda_2$ be two abelian varieties, and let $f \colon A_2 \to A_1$ be a homomorphism with
analytic representation $F \colon V_2 \to V_1$ and rational representation $F_{\Lambda} \colon \Lambda_2 \to \Lambda_1$. Then for any $\mathcal{L}(h, \, \chi) \in \mathrm{Pic}(A_1)$ we have
\begin{equation}  \label{eq:L-pullback}
f^{*}\LL(h, \, \chi)=\LL(F^{*}h, \, F_{\Lambda}^{*}\chi),
\end{equation}
\end{proposition}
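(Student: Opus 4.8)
The plan is to prove the identity by working with \emph{canonical factors of automorphy}, the explicit cocycles that underlie the Appell--Humbert description recalled above. I would first recall from \cite[Section 2.2]{BL04} that the line bundle $\LL(h, \, \chi)$ on $A_1 = V_1/\Lambda_1$ is the quotient of the trivial bundle $V_1 \times \CC$ by the $\Lambda_1$-action
\begin{equation*}
\lambda \cdot (v, \, t) = \bigl(v + \lambda, \; a_\lambda(v)\, t\bigr), \qquad a_\lambda(v) = \chi(\lambda)\, \exp\!\Bigl(\pi h(v, \, \lambda) + \tfrac{\pi}{2}\, h(\lambda, \, \lambda)\Bigr),
\end{equation*}
where $\lambda \in \Lambda_1$ and $(v, \, t) \in V_1 \times \CC$; here $a$ is the canonical factor of automorphy attached to $(h, \, \chi)$, and isomorphism classes of line bundles correspond to cohomology classes of such factors.

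Next I would use the functoriality of factors of automorphy under the analytic representation. Since $F \colon V_2 \to V_1$ is $\CC$-linear with $F(\Lambda_2) \subseteq \Lambda_1$ and $F_\Lambda = F|_{\Lambda_2}$, pulling back the defining $\Lambda_1$-action along $F$ shows that $f^* \LL(h, \, \chi)$ is represented by the factor of automorphy $\tilde{a}_\mu(w) := a_{F_\Lambda(\mu)}(F(w))$ for $\mu \in \Lambda_2$ and $w \in V_2$. Substituting the formula for $a$ and using the linearity identities $h(F(w), \, F_\Lambda(\mu)) = (F^*h)(w, \, \mu)$ and $\chi(F_\Lambda(\mu)) = (F_\Lambda^*\chi)(\mu)$, this becomes
\begin{equation*}
\tilde{a}_\mu(w) = (F_\Lambda^*\chi)(\mu)\, \exp\!\Bigl(\pi (F^*h)(w, \, \mu) + \tfrac{\pi}{2}\, (F^*h)(\mu, \, \mu)\Bigr),
\end{equation*}
which is precisely the canonical factor of automorphy of $\LL(F^*h, \, F_\Lambda^*\chi)$.

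To finish I would verify that $(F^*h, \, F_\Lambda^*\chi)$ is a legitimate Appell--Humbert datum on $A_2$: the form $F^*h$ is hermitian, its imaginary part takes integral values on $\Lambda_2$ because $F(\Lambda_2) \subseteq \Lambda_1$ and $\mathrm{Im}\, h$ is integral on $\Lambda_1$, and $F_\Lambda^*\chi$ obeys the semicharacter relation \eqref{eq:semichar formula} relative to $F^*h$ --- this follows by applying the corresponding relation for $\chi$ to the pair $F_\Lambda(\mu), \, F_\Lambda(\nu)$. Equality of the two canonical factors then yields $f^*\LL(h, \, \chi) = \LL(F^*h, \, F_\Lambda^*\chi)$, as claimed. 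I do not expect a genuine obstacle here, since the argument is formal once the cocycle machinery is set up; the only points demanding care are fixing the correct normalization of the canonical factor of automorphy (an erroneous sign or factor of $\tfrac{1}{2}$ would propagate through the computation) and checking that line-bundle pullback really does correspond to precomposing the factor of automorphy with $F$.
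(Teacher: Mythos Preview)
Your argument is correct and is essentially the standard proof from \cite[Lemma 2.3.4]{BL04}: pull back the canonical factor of automorphy along the analytic representation and observe that it coincides with the canonical factor attached to $(F^*h,\, F_\Lambda^*\chi)$. The paper itself gives no proof of this proposition, merely citing \cite{BL04}, so there is nothing to compare against beyond noting that your approach matches the cited source.
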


Given a point $x \in A$ and a divisor $D \subset A$, let us denote by $m(D, \, x)$ the multiplicity
of $D$ at $x$. 
\begin{lemma}[\cite{BL04}, Proposition 4.7.2] \label{lem:SymDiv} 
Let $\LL=\LL(h, \, \chi)$ be a symmetric line bundle on $A$ and $D$ a symmetric effective
divisor such that $\LL = \mathcal{O}_A(D)$. For every $2$-torsion point $x \in A[2]$ with representative $\frac{1}{2} \lambda$, where $\lambda \in \Lambda$, we have
\begin{equation*}
\chi(\l)=(-1)^{m(D, \, 0)+m(D, \, x)}.
\end{equation*}
\end{lemma}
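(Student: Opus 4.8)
The plan is to translate the statement into the language of classical theta functions attached to $\LL = \LL(h, \, \chi)$ via the Appell--Humbert description recalled in Subsection~\ref{subsec:AbelVar}, and then to compare the parities of the lowest-order Taylor coefficients of a defining theta function at the two $2$-torsion points $0$ and $x$. First I would fix the canonical factor of automorphy of $\LL(h, \, \chi)$, namely
\[
a_\lambda(v) = \chi(\lambda)\exp\Bigl(\pi h(v, \, \lambda) + \tfrac{\pi}{2} h(\lambda, \, \lambda)\Bigr),
\]
so that global sections of $\LL$ correspond to holomorphic functions $\vartheta \colon V \to \CC$ with $\vartheta(v + \lambda) = a_\lambda(v)\,\vartheta(v)$ for all $\lambda \in \Lambda$. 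Since $\LL = \OO_A(D)$, I would choose $\vartheta$ with $\mathrm{div}(\vartheta) = D$.

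The key structural step is to show that the symmetry of $D$ forces a clean scalar relation. Because $\LL$ is symmetric, the semicharacter $\chi$ takes values in $\{\pm 1\}$ (recorded in Subsection~\ref{subsec:AbelVar}), and together with $h(\lambda, \, \lambda) \in \RR$ and $\chi(-\lambda) = \chi(\lambda)$ one computes $a_{-\lambda}(-v) = a_\lambda(v)$. Hence $v \mapsto \vartheta(-v)$ satisfies the same functional equation as $\vartheta$; as $D$ is symmetric the two functions have the same divisor, so $\vartheta(-v) = \epsilon\,\vartheta(v)$ for a constant $\epsilon$, and applying this relation twice forces $\epsilon = \pm 1$. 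Expanding $\vartheta$ into homogeneous components near $0$, the lowest-order term has degree $m(D, \, 0)$, and the relation $\vartheta(-v) = \epsilon\,\vartheta(v)$ makes all surviving terms of a single parity; this identifies $\epsilon = (-1)^{m(D, \, 0)}$.

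Next I would transport the computation to $x$ with representative $\tfrac12\lambda$, studying $\vartheta_x(v) := \vartheta(v + \tfrac12\lambda)$, whose vanishing order at $0$ is $m(D, \, x)$. Writing $v - \tfrac12\lambda = (v + \tfrac12\lambda) - \lambda$ and applying the functional equation, the exponential in $a_\lambda$ collapses (the $\tfrac{\pi}{2}h(\lambda, \lambda)$ terms cancel), and one obtains
\[
\vartheta_x(-v) = \epsilon\,\chi(\lambda)\,e^{-\pi h(v, \, \lambda)}\,\vartheta_x(v).
\]
Comparing the lowest-order homogeneous parts of both sides at $v = 0$, where $e^{-\pi h(v, \, \lambda)} = 1$, yields $(-1)^{m(D, \, x)} = \epsilon\,\chi(\lambda)$. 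Substituting $\epsilon = (-1)^{m(D, \, 0)}$ and using $\chi(\lambda) = \pm 1$ gives $\chi(\lambda) = (-1)^{m(D, \, 0) + m(D, \, x)}$, as claimed.

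I expect the main obstacle to lie precisely in the structural step above: verifying that the symmetry of the divisor $D$ upgrades the mere line-bundle isomorphism $(-1)_A^*\LL \simeq \LL$ into the $v$-independent scalar relation $\vartheta(-v) = \epsilon\,\vartheta(v)$. This is exactly where the hypothesis that $\LL$ is symmetric is essential, since it is what makes the canonical factor of automorphy invariant under $(\lambda, v) \mapsto (-\lambda, -v)$; the remaining steps are routine bookkeeping with Taylor expansions and the quasi-periodicity relation.
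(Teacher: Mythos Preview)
Your argument is correct. The paper itself does not supply a proof of this lemma: it is simply quoted from \cite[Proposition~4.7.2]{BL04}. Your approach---working with the canonical factor of automorphy, using the symmetry of $\chi$ to see that $\vartheta(-v)$ satisfies the same functional equation, reading off $\epsilon=(-1)^{m(D,\,0)}$ from the lowest-order Taylor term, and then transporting to $x=\tfrac12\lambda$ via the quasi-periodicity relation---is precisely the standard proof one finds in that reference, so there is no genuine methodological difference to report.
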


\subsection{The equianharmonic product} \label{subsec:Hirzebruch}

Let $\zeta:=e^{2 \pi i/6}= \frac{1}{2} + \frac{\sqrt{3}}{2} i$, so that $\zeta^2-\zeta+1=0$, and consider the \emph{equianharmonic elliptic curve}
\begin{equation} \label{eq:curve-E'}
E':=\mathbb{C}/ \Gamma_{\zeta}, \quad \Gamma_{\zeta}:= \mathbb{Z} \zeta \oplus \mathbb{Z}. 
\end{equation}
Setting $V:= \mathbb{C}^2$, we can define
\begin{equation*}
A' := E' \times E' = V/\Lambda_{A'}, \quad \Lambda_{A'}: = \Gamma_{\zeta} \times \Gamma_{\zeta}.
\end{equation*}   
Then $A'$ is a principally polarized abelian surface, that we will call the \emph{equianharmonic product}.
Denoting by $(z_1, \, z_2)$ the coordinates of $V$ 
and by $e_1=(1, \, 0)$, $e_2 = (0, \, 1)$ its standard basis, the four vectors
\begin{equation} \label{eq:basis-l-m}
\lambda_1 := \zeta e_1,\ \ \lambda_2 := \zeta e_2,\ \ e_1,\ \ e_2 
\end{equation}
form a basis for the lattice $\Lambda_{A'}$.

We now consider the four $1$-dimensional complex subspaces of $V$ defined as
\begin{equation} \label{eq:four-complex-lines}
\begin{aligned}
V_1 & := \textrm{span}(e_1) = \{z_2=0\},   \quad \quad \quad \quad \,  V_2 := \textrm{span}(e_2) = \{z_1=0\}, \\
V_3 & := \textrm{span}(e_1+e_2) = \{z_1-z_2=0\},   V_4 := \textrm{span}(e_1 +\zeta e_2) =\{\zeta z_1 - z_2 =0 \}. 
\end{aligned}
\end{equation}
For each $k \in \{1, \, 2, \, 3, \, 4\}$, the subspace $V_k$ contains a rank $2$ sublattice $\Lambda_k \subset \Lambda_{A'}$ isomorphic to $\Gamma_{\zeta}$, where
\begin{equation} \label{eq:four-sublattices}
\begin{aligned}
\Lambda_1 & := \mathbb{Z} \lambda_1 \oplus \mathbb{Z} e_1,  \quad \quad \quad \quad \quad \quad
\Lambda_2 := \mathbb{Z} \lambda_2 \oplus \mathbb{Z} e_2, \\
\Lambda_3 & := \mathbb{Z}(\lambda_1 + \lambda_2) \oplus \mathbb{Z}(e_1+ e_2), 
\Lambda_4 :=\mathbb{Z}(\lambda_1 + \lambda_2 - e_2) \oplus \mathbb{Z}(\lambda_2 + e_1).
\end{aligned}
\end{equation}

Consequently, in $A'$ there are four elliptic curves isomorphic to $E'$, namely
\begin{equation}
E'_k := V_k/ \Lambda_k, \quad k \in \{1, \, 2, \, 3, \, 4 \}.
\end{equation}
\begin{proposition}[\cite{Hir84} Section 1] \label{prop:Hir-quadruple-point}
The four curves $E'_k$ only intersect $($pairwise transversally$)$ at the origin $o' \in A'$. Consequently, the reducible divisor
\begin{equation*}
D_{A'} := E'_1+E'_2+E'_3+E'_4
\end{equation*}
has an ordinary quadruple point at $o'$ and no other singularities. 
\end{proposition}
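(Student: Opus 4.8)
The plan is to compute the pairwise intersection points of the four elliptic curves $E'_k$ directly in coordinates, and then verify that the only common intersection is the origin $o'$, with all intersections transversal. First I would observe that each $E'_k$ passes through $o'$, since each sublattice $\Lambda_k$ is contained in $\Lambda_{A'}$ and each $V_k$ passes through the origin of $V$. The defining equations in \eqref{eq:four-complex-lines} immediately show that the four tangent directions $V_1, V_2, V_3, V_4$ at the origin are pairwise distinct lines in $V$, so the curves are pairwise transversal \emph{at} $o'$; what remains is to rule out any further intersection.

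The key computation is to determine, for each pair $k \neq l$, the set $E'_k \cap E'_l$ inside $A'$. Since $E'_k = V_k/\Lambda_k$ and $E'_l = V_l/\Lambda_l$, a point of $A'$ lies in both curves precisely when it admits a representative $v \in V_k$ and a representative $w \in V_l$ with $v - w \in \Lambda_{A'}$. Because $V_k \cap V_l = \{0\}$ (the four lines being pairwise distinct through the origin), such a point is determined by the question of which lattice vectors $\mu \in \Lambda_{A'}$ can be written as $\mu = v - w$ with $v \in V_k$, $w \in V_l$; equivalently, one decomposes $\Lambda_{A'}$ with respect to the direct sum $V = V_k \oplus V_l$ and counts the image of $\Lambda_{A'}$ modulo $\Lambda_k \oplus \Lambda_l$. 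The number of intersection points equals the index $[\Lambda_{A'} : \Lambda_k \oplus \Lambda_l]$, which by the geometry of intersection numbers on the abelian surface should equal $E'_k \cdot E'_l$. Concretely I would write each standard basis vector of $\Lambda_{A'}$ from \eqref{eq:basis-l-m} in terms of the $V_k$-$V_l$ decomposition and check, using $\zeta^2 = \zeta - 1$, that the only class landing in the identity component corresponds to the origin.

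The main obstacle I anticipate is the bookkeeping for the pair involving $E'_4$, whose sublattice $\Lambda_4$ in \eqref{eq:four-sublattices} is given by the less transparent generators $\lambda_1+\lambda_2-e_2$ and $\lambda_2+e_1$; verifying that $V_4 = \mathrm{span}(e_1+\zeta e_2)$ genuinely contains this rank-$2$ lattice, and that the index computations with $V_1, V_2, V_3$ all yield intersection exactly $1$ (so that the only common point is $o'$), requires care with the relation $\zeta^2-\zeta+1=0$. Once each pairwise intersection number is confirmed to be $1$, transversality at $o'$ follows from the distinctness of the tangent lines, and the statement about $D_{A'}$ is immediate: four smooth branches through a single point, pairwise transversal and meeting nowhere else, produce exactly an ordinary quadruple point at $o'$ and no other singularity, since away from $o'$ the curves are disjoint and individually smooth. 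I would close by noting $D_{A'}^2 = \sum_k (E'_k)^2 + 2\sum_{k<l} E'_k \cdot E'_l = 0 + 2\binom{4}{2} = 12$ as a consistency check against the expected geometry (each $(E'_k)^2 = 0$ since the $E'_k$ are elliptic curves on an abelian surface).
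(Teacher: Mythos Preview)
The paper does not actually prove this proposition: it is stated with attribution to Hirzebruch \cite{Hir84} and no argument is given. Your direct lattice computation is therefore not a comparison case but an independent proof, and it is correct.

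A couple of small remarks. First, for the index formula $|E'_k\cap E'_l|=[\Lambda_{A'}:\Lambda_k\oplus\Lambda_l]$ to hold you implicitly need $\Lambda_k=V_k\cap\Lambda_{A'}$ for each $k$ (otherwise the quotient map $V_k/\Lambda_k\to A'$ could fail to be injective and the count would be off). This is immediate for $k=1,2,3$, and for $k=4$ it follows from the fact that $\zeta$ is a unit in $\Gamma_\zeta$, so $(z,\zeta z)\in\Lambda_{A'}$ forces $z\in\Gamma_\zeta$; it is worth stating this explicitly. Second, once you run the six index computations you will find each equals $1$ (for the pair $(3,4)$, the key step is $(\zeta,\zeta)-(\zeta,\zeta^2)=(0,1)$ via $\zeta-\zeta^2=1$), confirming that the only common point is $o'$. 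Your consistency check $D_{A'}^2=12$ is correct and matches the Gram matrix $\det(E'_i\cdot E'_j)=\det(1-\delta_{ij})=-3$ used later in the paper.
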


By the Appell-Humbert Theorem, the N\'eron-Severi group  $\textrm{NS}(A')$ of $A'$ 
can be identified with the group of hermitian forms $h$ on $V$ whose imaginary part takes integral values on $\Lambda_{A'}$. We will use the  symbol $H$ for the $2 \times 2$ hermitian matrix associated to $h$ with respect to the standard basis of $V$ so that, thinking of $v, \, w \in V$ as column vectors, we can write
$h(v, \, w) = {}^tv H \bar{w}.$
We want now to identify those hermitian matrices $H_1, \ldots, H_4$
that correspond to the classes of the curves $E_1', \ldots, E_4'$, respectively. 
\begin{proposition} \label{pro:class_DA'} 
We have 
\begin{equation*}
\begin{split}H_{1} & =\frac{2}{\sqrt{3}}\left(\begin{array}{cc}
0 & 0\\
0 & 1
\end{array}\right),\quad\quad\;\,H_{2}=\frac{2}{\sqrt{3}}\left(\begin{array}{cc}
1 & 0\\
0 & 0
\end{array}\right),\\
H_{3} & =\frac{2}{\sqrt{3}}\left(\begin{array}{cc}
\;\;1 & -1\\
-1 & \;\;1
\end{array}\right),\quad H_{4}=\frac{2}{\sqrt{3}}\left(\begin{array}{cc}
\;1 & -\zeta\\
-\bar{\zeta} & \;1
\end{array}\right),
\end{split}
\end{equation*}
so that the hermitian matrix representing in $\mathrm{NS}(A')$ the class  of the divisor $D_{A'}$ is
\begin{equation*}
H:=H_1+H_2+H_3+H_4 = \frac{2}{\sqrt{3}} \left(\begin{array}{cc}
3 & -1-\zeta\\
-1- \bar{\zeta} & 3
\end{array}\right).
\end{equation*}
Moreover, setting $\lambda = (a_1+\zeta a_2, \, a_3+\zeta a_4) \in \Lambda_{A'}$, the semicharacter $\chi_{D_{A'}}$ corresponding to the line bundle  $\OO_{A'}(D_{A'})$ can be written as 
\begin{equation*}
\chi_{D_{A'}}(\lambda)=(-1)^{a_{1}+a_{2}+a_{3}+a_{4}+a_1 (a_2+ a_3+a_4)+(a_2 +a_3)a_4}.
\end{equation*}
\end{proposition}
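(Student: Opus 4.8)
The plan is to realize each elliptic curve $E'_k$ as the fiber over the origin of a surjective homomorphism $s_k \colon A' \to E'$, and then to transport the Appell--Humbert data of the principal polarization $\OO_{E'}((0))$ from $E'$ to $A'$ by means of the pullback formula \eqref{eq:L-pullback}, exploiting the multiplicativity $\LL(h,\chi)\otimes\LL(h',\chi')=\LL(h+h',\chi\chi')$.

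First I would set up the four homomorphisms. From \eqref{eq:four-complex-lines}--\eqref{eq:four-sublattices} one checks that $V_k=\ker S_k$ and $\Lambda_k=\Lambda_{A'}\cap V_k$, where $S_1(z)=z_2$, $S_2(z)=z_1$, $S_3(z)=z_1-z_2$ and $S_4(z)=\zeta z_1-z_2$ are the analytic representations of homomorphisms $s_k\colon A'\to E'$; indeed each $S_k$ maps $\Lambda_{A'}$ into $\Gamma_\zeta$ (for $s_4$ this uses $\zeta^2=\zeta-1$). Since $s_k$ is a surjective homomorphism whose connected kernel is $E'_k$, its fiber over the origin is exactly the reduced divisor $E'_k$, so that $\OO_{A'}(E'_k)=s_k^*\OO_{E'}((0))$, and hence $\OO_{A'}(D_{A'})=\bigotimes_{k=1}^4 s_k^*\OO_{E'}((0))$.

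Next I would determine the data of $\OO_{E'}((0))$. The origin is a symmetric theta divisor, so $\OO_{E'}((0))=\LL(h_0,\chi_0)$ with $h_0(z,w)=c\,z\bar w$; requiring $\mathrm{Im}\,h_0$ to be integral and unimodular on $\Gamma_\zeta$ forces $\mathrm{Im}\,h_0(\zeta,1)=1$, i.e. $c=2/\sqrt3$. Applying the pullback formula \eqref{eq:L-pullback} then gives $H_k=S_k^*h_0$, namely the hermitian matrix of $(v,w)\mapsto \tfrac{2}{\sqrt3}\,S_k(v)\overline{S_k(w)}$; a direct expansion (using $\zeta\bar\zeta=1$) yields the four matrices $H_1,\dots,H_4$, and summing them gives $H$, the off-diagonal entry $-1-\zeta$ arising from $H_3+H_4$. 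This part is routine once the $s_k$ are in hand.

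Finally, the semicharacter, which I expect to be the only delicate point. Since $(0)$ is symmetric, $\chi_0$ is $\{\pm1\}$-valued, and Lemma \ref{lem:SymDiv} evaluates it on $\gamma=a\zeta+b\in\Gamma_\zeta$ through the multiplicities of $(0)$ at $0$ and at the $2$-torsion point $\tfrac12\gamma$: one finds $\chi_0(\gamma)=+1$ exactly when $\gamma\in 2\Gamma_\zeta$, which can be written as $\chi_0(a\zeta+b)=(-1)^{a+b+ab}$. By \eqref{eq:L-pullback} the semicharacter of $\OO_{A'}(E'_k)$ is $\chi_k=\chi_0\circ S_{k,\Lambda}$, and by multiplicativity $\chi_{D_{A'}}=\chi_1\chi_2\chi_3\chi_4$. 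Writing $\lambda=(a_1+\zeta a_2,\,a_3+\zeta a_4)$ and substituting each $S_{k,\Lambda}(\lambda)$ into the expression for $\chi_0$ reduces the claim to an identity in $\mathbb{F}_2$ among the four resulting exponents. The hard part is this $\mathbb{F}_2$ bookkeeping: the quadratic cross-terms produced by $\chi_0$ on the images $z_1-z_2$ and $\zeta z_1-z_2$ (the latter rewritten via $\zeta^2=\zeta-1$ as $\zeta(a_1+a_2-a_4)-(a_2+a_3)$) must be tracked with care, but collecting the linear and quadratic parts modulo $2$ yields precisely the stated exponent $a_1+a_2+a_3+a_4+a_1(a_2+a_3+a_4)+(a_2+a_3)a_4$.
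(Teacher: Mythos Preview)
Your proposal is correct and follows essentially the same approach as the paper: you define the same four surjections $s_k\colon A'\to E'$ (the paper calls them $F_k$), identify the Appell--Humbert data of $\OO_{E'}((0))$ via the Pfaffian condition and Lemma~\ref{lem:SymDiv}, pull back by \eqref{eq:L-pullback}, and multiply. The only slight imprecision is that unimodularity alone gives $\mathrm{Im}\,h_0(\zeta,1)=\pm 1$; you need the positivity of $h_0$ (ampleness of $(0)$) to fix the sign, as the paper notes explicitly.
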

\begin{proof}
The hermitian form $\tilde{h}$ on $\mathbb{C}$ given by $\tilde{h}(z_1, \, z_2)=\frac{2}{\sqrt{3}}z_1\bar{z_2}$ is positive
definite and its imaginary part is integer-valued on $\Gamma_{\zeta}$, so it defines a positive class in $\mathrm{NS}(E')$. Moreover, in the ordered basis $\{\zeta, \, 1\}$ of $\Gamma_{\zeta}$, the alternating form $\mathrm{Im}\, \tilde{h}$ is represented by the skew-symmetric matrix  $\begin{psmallmatrix}\;\;0 & 1 \\ -1 & 0\end{psmallmatrix}$,
whose Pfaffian equals $1$, so $\tilde{h}$ corresponds to the ample generator
of the N\'eron-Severi group of $E'$, see \cite[Corollary 3.2.8]{BL04}. In other words, $\tilde{h}$ is the Chern class of $\OO_{E'}(0)$,
where $0$ is the origin of $E'$. Write $\OO_{E'}(0)=\mathcal{L}(\tilde{h}, \, \nu)$ 
for a suitable semicharacter $\nu \colon \Gamma_{\zeta} \to \mathbb{C}$;
since $\OO_{E'}(0)$
is a symmetric line bundle, the values of $\nu$ at
the generators of $\Gamma_{\zeta}$ can be computed by using Lemma \ref{lem:SymDiv}, obtaining 
$\nu(1)=-1, \quad \nu(\zeta)=-1.$
Consequently, for all $a, \, b \in \mathbb{Z}$ we get 
\begin{equation} \label{eq:char-psi}
\begin{split}
\nu(a+b\zeta)&=\nu(a) \nu(b \zeta) e^{\pi i \, \mathrm{Im} \, \tilde{h}(a, \, b \zeta)}=(-1)^a (-1)^b (-1)^{ab} = (-1)^{a+b+ab}.
\end{split}
\end{equation}
For any $k \in \{1,\dots, 4\}$ let us define a group homomorphism $F_{k} \colon A'\to E'$ as follows: 
\begin{equation*}
F_{1}(z_{1}, \, z_{2})=z_{2}, \quad F_{2}(z_{1}, \, z_{2})=z_{1},\quad  F_{3}(z_{1}, \, z_{2})=z_{1}-z_{2},\quad F_{4}(z_{1}, \, z_{2})=\zeta z_{1}-z_{2}.
\end{equation*}
By \eqref{eq:four-complex-lines} we have $E_{k}'=F_k^{*}(0)$ and so, setting $\mathcal{O}_{A'}(E_k')=\mathcal{L}(h_k, \, \chi_k')$, by \eqref{eq:L-pullback} we deduce  
\begin{equation} \label{eq:Fk*}
h_k=F_k^* \tilde{h}, \quad \chi_k'=F_k^* \nu.
\end{equation}
This gives immediately the four matrices $H_{1},\dots,H_{4}$. Moreover, by using  \eqref{eq:char-psi} and \eqref{eq:Fk*}, we can write down the semicharacters $\chi'_{1},\dots,\chi'_{4}$; in fact, for any $\lambda = (a_1+\zeta a_2, \, a_3+\zeta a_4) \in \Lambda_{A'}$, we obtain
\begin{equation*}
\begin{split}
\chi'_{1}(\lambda)&=(-1)^{a_{3}+a_{4}+a_{3}a_{4}}\\
\chi'_{2}(\lambda)&=(-1)^{a_{1}+a_{2}+a_{1}a_{2}}\\
\chi'_{3}(\lambda)&=(-1)^{a_{1}+a_{2}+a_{3}+a_{4}+(a_{1}+a_{3})(a_{2}+a_{4})}\\
\chi'_{4}(\lambda)&=(-1)^{a_{1}+a_{2}+a_{3}+a_{4}+(a_{1}+a_{4})(a_{2}+a_{3})+a_{2}a_{3}}.
\end{split}
\end{equation*}
The semicharacter $\chi_{D_{A'}}$ can be now computed by using the formula 
$\chi_{D_{A'}}=\chi'_{1}\chi'_{2} \chi'_{3}\chi'_{4}.$
\end{proof}

\begin{remark} \label{rem:princ-pol}
The hermitian matrix
\begin{equation*}
H_1+H_2= \frac{2}{\sqrt{3}} \left(\begin{array}{cc}
1 & 0\\
0 & 1
\end{array}\right)
\end{equation*}
represents  in $\mathrm{NS}(A')$ the class of the principal polarization of product type
\begin{equation*}
\Theta := E' \times \{0\} + \{0\} \times E'.
\end{equation*}
\end{remark}

\begin{remark} \label{prop:neron-Severi-via-E}
The free abelian group $\mathrm{NS}(A')$ is generated by the classes of the elliptic curves $E_1'$, $E_2'$, $E_3'$, $E_4'$. In fact, since $A' = E' \times E'$ and $E'$ has complex multiplication, it is well-known that $\textrm{NS}(A')$ has rank $4$, see \cite[Exercise 5.6 (10) p.142]{BL04}, hence we only need to show that the classes of the curves $E_k'$ generate a primitive sublattice of maximal rank in the N\'eron-Severi group.  By Proposition \ref{prop:Hir-quadruple-point}, the corresponding Gram matrix has determinant
\begin{equation*}
\det \, (E_i' \cdot E_j') = \det(1-\delta_{ij})= -3 
\end{equation*} 
so the claim follows because $-3$ is a non-zero, square-free integer.
\end{remark}

\subsection{Double covers of the equianharmonic product} \label{subsec:example-II}

In order to construct a surface of type $II$, we must find an abelian surface $A$ and a divisor $D_A$ on it such that  
\begin{itemize}
\item $D_A$ is $2$-divisible in $\textrm{Pic}(A)$; 
\item $D_A^2=24$ and $D_A$ has precisely two ordinary quadruple points as singularities.
\end{itemize}
We will construct the pair $(A, \, D_A)$ as an \'etale double cover of the pair $(A', \, D_{A'})$, where $A'=V/\Lambda_{A'}$ is the equianharmonic product and $D_{A'}=E_1'+E_2'+E_3'+E_4'$ is the sum of four elliptic curves considered in Proposition \ref{prop:Hir-quadruple-point}.

By the Appell-Humbert theorem, the sixteen $2$-torsion divisors on $A'$, i.e. the elements of order $2$ in $\textrm{Pic}^0(A')$, correspond to the sixteen characters 
\begin{equation} \label{eq:char-chi}
\chi \colon \Lambda_{A'} \to \{ \pm 1 \}.
\end{equation}
Any such character is specified 
by its values at the elements of the ordered basis $\{\lambda_1, \, \lambda_2, \, e_1, \, e_2 \}$ of $\Lambda_{A'}$ given in \eqref{eq:basis-l-m}, so it can be denoted by 
\begin{equation*}
\chi = (\chi(\lambda_1), \, \chi(\lambda_2), \, \chi(e_1), \, \chi(e_2)). 
\end{equation*}
For instance, $\chi_0:=(1, \, 1,\, 1,\, 1)$ is the trivial character, corresponding to the trivial divisor $\mathcal{O}_{A'}$.
We will write
\begin{equation} \label{eq:characters}
\begin{array}{lll}
\chi_1\ :=(-1, \, -1, \, 1, -1),    & \chi_2\ := (1, \, -1, \, -1, \, 1),  & \chi_3\ :=(-1, \, 1, \, -1, \, -1),\\ 
\chi_4\ :=(1, \, 1, \, -1, \, 1),   &  \chi_5\ :=(-1, \, 1, \, 1, \,1),    & \chi_6\ :=(-1, \, 1,\, -1, \, 1),\\ 
\chi_7\ :=(1, \, 1, \, 1, \, -1),   &  \chi_8\ :=(1, \, -1, \, 1, \, -1),  & \chi_9\ :=(-1, \, 1, \, 1, \,-1),\\
\chi_{10} :=(1, \, 1,\, -1, \, -1), & \chi_{11}:=(-1, \, -1, \, -1, \, 1), & \chi_{12} :=(1, \, -1, \, 1, \, 1),\\
\chi_{13}:=(1, \, -1, \, -1, \,-1), &  \chi_{14}:=(-1, \, -1,\, 1, \, 1),  &  \chi_{15} :=(-1, \, -1, \, -1, \, -1)
\end{array}
\end{equation} 
for the fifteen non-trivial characters. To any non-trivial $2$-torsion divisor on $A'$, and so to any non-trivial character $\chi$ as in \eqref{eq:char-chi}, it corresponds an isogeny of degree two 
$f_{\chi} \colon A_{\chi} \to A';$
in fact, $\ker  \chi \subset \Lambda_{A'}$ is a sublattice of index $2$ and $A_{\chi}$ is the abelian surface  
\begin{equation} \label{eq:A-chi}
A_{\chi} = V/\ker  \chi.
\end{equation}
Let us set 
\begin{equation*}
E_i  := f_{\chi}^*(E_i'), \quad
D_{A_{\chi}}  := f_{\chi}^*(D_{A'})=E_1+E_2+E_3+E_4
\end{equation*}
and write $\varSigma$ for the subgroup  of $\textrm{Pic}^0(A')$ generated by $\chi_1$ and $\chi_2$, namely
\begin{equation} \label{Characters}
\varSigma:= \{\chi_0, \, \chi_1, \, \chi_2, \, \chi_3  \}.
\end{equation}
We are now ready to prove the key result of this subsection.
\begin{proposition} \label{prop:2-divisibility}
The following are equivalent$:$ 
\begin{itemize}
\item[$\boldsymbol{(a)}$] the divisor $D_{A_{\chi}}$ is $2$-divisible in $\mathrm{Pic}(A_{\chi});$ 
\item[$\boldsymbol{(a')}$] the divisor $D_{A_{\chi}}$ is $2$-divisible in $\mathrm{NS}(A_{\chi});$ 
\item[$\boldsymbol{(b)}$] every $E_i$ is an irreducible elliptic curve in $A_{\chi};$
\item[$\boldsymbol{(c)}$] the character $\chi$ is a non-trivial element of $\varSigma$.  
\end{itemize}
\end{proposition}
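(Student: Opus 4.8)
The plan is to prove the cycle by separating an abstract equivalence from the geometric content, and then to reduce the geometric content to a finite computation with $2$-torsion characters. First I would dispose of $\boldsymbol{(a)} \Leftrightarrow \boldsymbol{(a')}$, which holds for every line bundle on an abelian variety. The implication $\boldsymbol{(a)} \Rightarrow \boldsymbol{(a')}$ is immediate since $c_1$ is a homomorphism. For the converse, if the class of $\OO_{A_\chi}(D_{A_\chi})$ is $2$-divisible in $\mathrm{NS}(A_\chi)$, I would choose $\mathcal{M} \in \mathrm{Pic}(A_\chi)$ with $c_1(\mathcal{M}) = \tfrac{1}{2}\, c_1(D_{A_\chi})$; then $\OO_{A_\chi}(D_{A_\chi}) \otimes \mathcal{M}^{-2}$ lies in $\mathrm{Pic}^0(A_\chi)$, which is a divisible group because it is a complex torus, so it admits a square root $\mathcal{N}$, and $\mathcal{M} \otimes \mathcal{N}$ is then a square root of $\OO_{A_\chi}(D_{A_\chi})$. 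After this step it suffices to prove $\boldsymbol{(b)} \Leftrightarrow \boldsymbol{(c)}$ and $\boldsymbol{(a')} \Leftrightarrow \boldsymbol{(c)}$.

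For $\boldsymbol{(b)} \Leftrightarrow \boldsymbol{(c)}$ I would exploit the geometric meaning of the pull-back. Since $f_\chi$ is étale of degree $2$, the divisor $E_i = f_\chi^*(E_i')$ is reduced and $f_\chi$ restricts over $E_i' = V_i/\Lambda_i$ to the double cover classified by $\chi|_{\Lambda_i} \in \mathrm{Hom}(\Lambda_i, \, \mathbb{Z}/2\mathbb{Z})$. Hence $E_i$ is a connected (so irreducible, and of genus $1$ by Riemann--Hurwitz) double cover precisely when $\chi|_{\Lambda_i} \neq 1$, and otherwise it splits as two disjoint translates of an elliptic curve. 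Thus $\boldsymbol{(b)}$ amounts to the requirement $\chi|_{\Lambda_i} \neq 1$ for every $i \in \{1,2,3,4\}$, which depends only on the image of $\chi$ in $\mathrm{Hom}(\Lambda_{A'}/2\Lambda_{A'}, \, \mathbb{Z}/2\mathbb{Z})$. Reading the generators of the $\Lambda_i$ from \eqref{eq:four-sublattices} turns this into four explicit conditions on $(\chi(\lambda_1), \chi(\lambda_2), \chi(e_1), \chi(e_2))$, and checking the list \eqref{eq:characters} singles out exactly $\chi_1, \chi_2, \chi_3$, the non-trivial elements of $\varSigma$.

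The conceptual core is $\boldsymbol{(a')} \Leftrightarrow \boldsymbol{(c)}$, which I would handle through the alternating form $\omega := \mathrm{Im}\, h$, where $h$ is the hermitian form of $D_{A'}$ with matrix $H$ from Proposition \ref{pro:class_DA'}. By \eqref{eq:L-pullback} one has $\OO_{A_\chi}(D_{A_\chi}) = \LL(h, \, \chi_{D_{A'}}|_{\ker\chi})$, and a hermitian form represents a class of $\mathrm{NS}(A_\chi)$ exactly when its imaginary part is integral on $\ker\chi$; therefore $D_{A_\chi}$ is $2$-divisible in $\mathrm{NS}(A_\chi)$ if and only if $\omega$ takes even values on $\ker\chi \times \ker\chi$. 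Because $\chi$ has order $2$ we have $2\Lambda_{A'} \subseteq \ker\chi$, so $H_\chi := \ker\chi / 2\Lambda_{A'}$ is a hyperplane in $\Lambda_{A'}/2\Lambda_{A'} \cong (\mathbb{Z}/2\mathbb{Z})^4$, and the condition becomes the vanishing on $H_\chi$ of the reduction $\bar\omega$ of $\omega$. I would then compute $\bar\omega$ in the basis $\{\lambda_1, \lambda_2, e_1, e_2\}$ by summing the contributions $\mathrm{Im}\, h_k = F_k^*\,\mathrm{Im}\,\tilde{h}$ extracted from the proof of Proposition \ref{pro:class_DA'}; the result is an alternating form of rank $2$ with a $2$-dimensional radical $R$. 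Since a rank-$2$ alternating form vanishes on a hyperplane $H_\chi$ if and only if $H_\chi \supseteq R$, there are exactly three such hyperplanes, and translating them back into characters yields precisely $\chi_1, \chi_2, \chi_3$.

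The hard part will be this last computation: assembling $\bar\omega$ correctly from the Appell--Humbert data, determining its radical $R$, and then verifying that the three hyperplanes containing $R$ are exactly the non-trivial elements of $\varSigma$ and not some other triple of $2$-torsion classes. Two consistency checks keep it honest: the Pfaffian $\mathrm{Pf}(\omega) = \tfrac{1}{2} D_{A'}^2 = 6$ is even, which forces $\bar\omega$ to be degenerate, while the fact that $D_{A'}$ is not $2$-divisible already in $\mathrm{NS}(A')$ (its class is $(1,1,1,1)$ in the primitive basis $E_1', \dots, E_4'$ of Remark \ref{prop:neron-Severi-via-E}) rules out $\bar\omega = 0$, so its rank is indeed exactly $2$. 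Finally, I would stress that $\boldsymbol{(a')} \Rightarrow \boldsymbol{(a)}$ genuinely relies on the divisibility of $\mathrm{Pic}^0$, since a square root of the $\pm 1$-valued semicharacter $\chi_{D_{A'}}|_{\ker\chi}$ need not exist among semicharacters a priori; this is why that equivalence is proved abstractly at the outset.
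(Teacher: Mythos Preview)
Your proposal is correct and follows the paper closely for $(a)\Leftrightarrow(a')$ and $(b)\Leftrightarrow(c)$. The genuine difference lies in $(a')\Leftrightarrow(c)$. The paper treats the two directions separately: for $(a')\Rightarrow(c)$ it invokes the already-established $(b)\Leftrightarrow(c)$ and argues geometrically that if some $E_i$ splits as $E_{i1}+E_{i2}$ then the projection formula gives $D_{A_\chi}\cdot E_{i1}=D_{A'}\cdot E_i'=3$, an odd intersection number incompatible with $2$-divisibility; for $(c)\Rightarrow(a')$ it writes down explicit $\mathbb{Z}$-bases of $\ker\chi_1,\ker\chi_2,\ker\chi_3$ and checks by hand, using Table~\ref{table:imh}, that $\mathrm{Im}\,h$ is even on each. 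Your route instead reduces $\omega=\mathrm{Im}\,h$ modulo $2$ and characterises the admissible $\chi$ in one stroke as those whose hyperplane $H_\chi\subset(\mathbb{Z}/2\mathbb{Z})^4$ contains the radical of $\bar\omega$. This is more uniform and structurally cleaner---the rank-$2$ determination via the parity of the Pfaffian together with the primitivity of $D_{A'}$ in $\mathrm{NS}(A')$ is a nice touch---though you still need to compute the radical explicitly to identify which three characters arise, so the amount of actual calculation is comparable. What the paper's intersection-number argument buys is a transparent geometric reason for the failure of $2$-divisibility outside $\varSigma$, tying $(a')$ and $(b)$ together directly rather than through a common linear-algebra reformulation.
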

\begin{proof}
We first observe that $\mathrm{NS}(A_{\chi})=\mathrm{Pic}(A_{\chi})/\mathrm{Pic}^0(A_{\chi})$ and $\mathrm{Pic}^0(A_{\chi})$ is a divisible group, so $\boldsymbol{(a)}$ is equivalent to $\boldsymbol{(a')}$.  

Next, the curve $E_i \subset A_{\chi}$ is irreducible if and only if the $2$-torsion divisor corresponding to the character  $\chi \colon \Lambda_{A'} \to \{\pm1\}$ restricts non-trivially to $E_i'$. This in turn means that $\chi$  restricts non-trivially to the sublattice $\Lambda_i$, and so $\boldsymbol{(b)}$ occurs if and only if 
$\chi$ restricts non-trivially to all $\Lambda_1$, $\Lambda_2$, 
$\Lambda_3$, $\Lambda_4$. By using the generators given in \eqref{eq:four-sublattices}, a long but elementary computation (or a quick computer calculation) shows that this happens if and only if $\boldsymbol{(c)}$ holds.  

It remains to prove that $\boldsymbol{(a')}$ and $\boldsymbol{(c)}$ are equivalent. The isogeny $f_{\chi} \colon A_{\chi} \to A$ lifts to the identity $1_{V} \colon V \to V$  so, if $h \colon V \times V \to \mathbb{C}$ is the hermitian form that represents the class of $D_{A'}$ in $\mathrm{NS}(A')$, then the same form also represents the class of $D_{A_{\chi}}$ in $\mathrm{NS}(A_{\chi})$.  
By the Appell-Humbert theorem the group $\NS(A_{\chi})$ can be identified with the group of hermitian forms on $V$ whose imaginary part takes integral values on the lattice $\ker \chi$, so \eqref{eq:A-chi} implies that condition $\boldsymbol{(a')}$ is equivalent to
\begin{equation} \label{eq:(a')}
\mathrm{Im}\; h(\ker \chi, \, \ker \chi) \subseteq 2 \mathbb{Z}.
\end{equation} 
The non-zero values assumed by the alternating form $\textrm{Im} \, h$ on the generators $\lambda_1, \, \lambda_2, \, e_1, \, e_2$ of $\Lambda_{A'}$ can be computed by using the hermitian matrix $H$ given in Proposition \ref{pro:class_DA'}, obtaining Table \ref{table:imh} below:
\begin{table}[H] 
\begin{center}
\begin{tabular}{c|c|c|c|c|c|c} 
$(\cdot, \, \cdot)$   & $(\lambda_1, \, \lambda_2)$ & $(\lambda_1, \, e_1)$ & $(\lambda_1, \, e_2)$ & $(\lambda_2, \, e_1)$ & $(\lambda_2, \, e_2) $ & $(e_1, \, e_2)$ \\
\hline
$\mathrm{Im} \; h(\cdot, \, \cdot)$ & $-1$ & $3$ & $-2$ & $-1$ & $3$ & $-1$ \\
\end{tabular} \caption{Non-zero values of $\imh$ at the generators of $\Lambda_{A'}$} \label{table:imh}  
\end{center}
\end{table}
Now we show that (\ref{eq:(a')}) holds if and only if  $\chi$ is a non-trivial element of $\varSigma.$ In fact we have seen that, if $\chi \notin \{\chi_1, \, \chi_2, \, \chi_3\}$, then
one of the effective divisors $E_i=f_{\chi}^*(E_i')$ is a disjoint union of two elliptic curves, say $E_i=E_{i1}+E_{i2}.$ But then, using the projection formula, we find
\begin{equation*}
D_{A_{\chi}} \cdot E_{i1}  = f_{\chi}^*(D_{A'}) \cdot E_{i1} = D_{A'} \cdot f_{\chi \,*}(E_{i1})  = D_{A'} \cdot E_i' =3
\end{equation*}
which is not an even integer, so $D_{A_{\chi}}$ is not $2$-divisible in this situation.

Let us consider now the case $\chi \in \{\chi_1, \, \chi_2, \, \chi_3 \}$. We can easily see that the integral bases of $\ker \chi_1$, $\ker \chi_2$, $\ker \chi_3$  are given by
\begin{equation} \label{eq:bases-ker-chi}
\begin{split}
\mathscr{B}_1&:=\{e_1, \, \lambda_1+e_2, \, \lambda_2+e_2, \, 2e_2\}, \\
\mathscr{B}_2&:=\{\lambda_2+e_1, \, \lambda_1, \, e_2, \, 2e_1 \},\\
\mathscr{B}_3&:=\{\lambda_1+e_2, \, \lambda_2, \, 2e_2, \, e_1+e_2 \},\\
\end{split}
\end{equation}
respectively. Then, by using Table \ref{table:imh}, it is straightforward to  check that 
$\mathrm{Im}\, h(b_1, \, b_2) \in 2 \mathbb{Z}$ for all $b_1, \,  b_2\in \mathscr{B}_1$;
for instance, we have
\begin{equation*} \label{eq1}
\begin{split}
\mathrm{Im}\; h(\lambda_1+e_2, \, \lambda_2+e_2) & = \mathrm{Im}\; h(\lambda_1,\lambda_2) + \mathrm{Im}\; h(\lambda_1,e_2) + \mathrm{Im}\; h(e_2,\lambda_2) + \mathrm{Im}\; h(e_2,e_2) \\
 & = -1 -2 -3 +0 = -6 \in 2 \mathbb{Z}.
\end{split}
\end{equation*}
This shows that the inclusion (\ref{eq:(a')}) holds for $\chi_1.$ The proof that it also holds for $\chi_2$ and $\chi_3$ is analogous.
\end{proof}

\begin{remark} \label{rem:duality-for-2-torsion}
Writing the details in the proof of Proposition \ref{prop:2-divisibility}, one sees that every non-trivial character $\chi$ in \eqref{eq:characters} restricts trivially to \emph{at most one} curve $E_i'$.  Identifying $A'$ with $\textrm{Pic}^0(A')$ via the principal polarization $\Theta$ described in Remark \ref{rem:princ-pol}, this corresponds to the fact that every non-zero $2$-torsion point of $A'$ is contained in at most one of the $E_i'$. More precisely, every $E_i'$ contains exactly three non-zero $2$-torsion points of $A'$, so it remain $16-(4\times 3 + 1)=3$ of them that are not contained in any of the $E_i'$. Via the identification above, they clearly correspond to the three $2$-torsion divisors restricting non-trivially to all the $E_i'$, namely to the three non-trivial characters in the group $\varSigma$.    
\end{remark}

Summing up, we have the following existence result for surfaces of type $II$.
\begin{proposition} \label{prop:type-II}
Let $\chi$ be any non-trivial element in the group $\varSigma$, and write $f \colon A \to A'$ instead of $f_{\chi} \colon A_{\chi} \to A'$. Then there exists a double cover
$\alpha_X \colon X \to A $
branched precisely over the $2$-divisible effective divisor $D_A \subset A$. The minimal resolution $S$ of $X$ is a smooth surface with $p_g=q=2$, $K^2=8$ and Albanese map of degree $2$, belonging to type $II$. 

\end{proposition}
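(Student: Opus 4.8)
The plan is to assemble the building blocks established earlier in the excerpt. Fix a non-trivial character $\chi \in \varSigma$ and set $f := f_\chi \colon A \to A'$, which by construction is an \'etale isogeny of degree $2$. By Proposition \ref{prop:2-divisibility}, condition $\boldsymbol{(c)}$ holds for this $\chi$, so the equivalent condition $\boldsymbol{(a)}$ gives a divisor $L_A$ on $A$ with $D_A \simeq 2L_A$ in $\mathrm{Pic}(A)$. This $2$-divisibility is precisely what allows the standard double cover construction (\cite[Chapter I, Section 17]{BHPV03}): first I would invoke it to produce a finite double cover $\alpha_X \colon X \to A$ branched exactly over $D_A$, with $X$ normal because $D_A$ is reduced with only isolated (ordinary quadruple point) singularities. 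Then I would let $S$ be the minimal resolution of $X$, which coincides with $\bar{S}$ by Remark \ref{rem:sing-Stein}.

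Next I would compute the invariants of $S$ using the canonical resolution formulae \eqref{eq:sum-sing}. Since $D_A = f^*(D_{A'})$ and $f$ is \'etale of degree $2$, the projection formula gives $D_A^2 = 2 \, D_{A'}^2$. From Proposition \ref{prop:Hir-quadruple-point}, the divisor $D_{A'}$ has a single ordinary quadruple point and $D_{A'}^2 = \big(\sum_{k} E_k'\big)^2 = 2\sum_{j<k} E_j' E_k' = 2 \cdot 6 = 12$, so $D_A^2 = 24$. Because $f$ is \'etale, the preimage of the unique singular point $o'$ of $D_{A'}$ consists of two points $p_1, p_2$, each an ordinary quadruple point of $D_A$, and there are no other singularities. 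Thus $L_A^2 = \tfrac14 D_A^2 = 6$, and the two quadruple points contribute $m_1 = m_2 = 2$. Plugging into \eqref{eq:sum-sing} yields $\chi(\mathcal{O}_S) = \tfrac12\big(L_A^2 - \sum m_i(m_i-1)\big) = \tfrac12(6-4)=1$ and $K_S^2 = 2L_A^2 - 2\sum(m_i-1)^2 = 12 - 4 = 8$. Since an ordinary quadruple point is a minimal singularity (noted in the examples following \eqref{hat B}), Remark \ref{rem:sing-Stein} confirms $S = \bar S$ is minimal, so these are indeed its invariants.

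It then remains to identify $S$ as a genuine surface of type $II$ in the sense of Definition \ref{def:typeI-II}, meaning $p_g = q = 2$, that $S$ is of general type, and that $\alpha \colon S \to A$ realizes the Albanese map of degree $2$. The equality $q(S) = \dim A = 2$ together with $\chi(\mathcal{O}_S)=1$ forces $p_g(S)=2$, provided I first check that $\alpha$ induces an isomorphism on $H^1$; this follows because $A$ is an abelian surface and $\alpha$ factors the surface into $A$ with $\alpha^*$ injective on global $1$-forms. By Lemma \ref{lem:alb-isom}, the generically finite degree $2$ morphism $\alpha = \alpha_X \circ c$ agrees with the Albanese map up to an automorphism of $A$, so $\deg \alpha = 2$. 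Finally, the branch locus $D_A$ has exactly two ordinary quadruple points and $D_A^2 = 24$, which is precisely case $\boldsymbol{(II)}$ of Proposition \ref{prop:branch-locus}.

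I expect the main obstacle to be the bookkeeping that verifies the two preimage points $p_1, p_2$ are genuinely \emph{ordinary} quadruple points rather than some more degenerate or infinitely-near configuration, and that no extra singularities of $D_A$ are created by pullback. This is where the \'etaleness of $f$ is essential: since $f$ is unramified, it is a local biholomorphism near each $p_i$, so it carries the four smooth branches of $D_{A'}$ meeting transversally at $o'$ to four smooth branches meeting transversally at $p_i$, preserving both the multiplicity and the transversality. Making this local-analytic argument precise, and confirming it is consistent with the global description $D_A = E_1 + E_2 + E_3 + E_4$ from Proposition \ref{prop:branch-type-II}, is the step requiring the most care; everything else reduces to the numerical substitutions above.
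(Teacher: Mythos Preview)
Your construction of $X$, the computation $D_A^2 = 24$, and the evaluation of $\chi(\mathcal{O}_S)=1$ and $K_S^2=8$ via the canonical resolution formulae are all correct and match the paper's approach. The real gap is your claim that $q(S)=2$.

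What you actually prove is only $q(S)\geq 2$: the pullback $\alpha^* \colon H^0(A,\Omega^1_A) \to H^0(S,\Omega^1_S)$ is injective because $\alpha$ is dominant, but nothing in your argument forces it to be surjective. Your sentence ``this follows because $A$ is an abelian surface and $\alpha$ factors the surface into $A$ with $\alpha^*$ injective on global $1$-forms'' establishes the wrong inequality for your purposes. Likewise, your appeal to Lemma~\ref{lem:alb-isom} is circular: that lemma is stated for a surface already known to have $p_g=q=2$, so it cannot be used to deduce that $A=\mathrm{Alb}(S)$ before $q(S)=2$ has been verified. At this stage you only know $\chi(\mathcal{O}_S)=1$, hence $p_g(S)=q(S)\geq 2$, and you must still exclude $p_g=q=3$ and $p_g=q=4$.

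This exclusion is in fact the main content of the paper's proof. The paper invokes the classification of minimal surfaces of general type with $K^2=8$ and $p_g=q\in\{3,4\}$ due to Hacon--Pardini, Pirola, and Debarre--Beauville: such a surface is either a product of two genus-$2$ curves ($p_g=q=4$) or a free $\mathbb{Z}/2\mathbb{Z}$-quotient $(C_2\times C_3)/\mathbb{Z}_2$ ($p_g=q=3$), and in either case contains no elliptic curves. Since your $S$ visibly contains the four elliptic curves arising as strict transforms of the components of $D_A$, both alternatives are impossible, forcing $p_g=q=2$. You need this (or an equivalent) step; without it the argument is incomplete.
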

\begin{proof}
It only remains to compute the invariants of $S$. From the double cover formulas (see \cite[Chapter V.22]{BHPV03}) we see that, if we impose an ordinary quadruple point to the branch locus, then $\chi$ decreases  by $1$ and $K^2$ decreases by $2$, hence we get 
\begin{equation*}
\chi(\mathcal O_S)=\frac{1}{8}D_A^2-2=1 {\rm\ \ \ and\ \ \ } K_S^2=\frac{1}{2}D_A^2-4=8.
\end{equation*}
Since $q(S)\geq q(A)=2,$ we have $p_g(S)=q(S)\geq 2.$
Assume that $p_g(S)=q(S)\geq 3.$ By \cite{HacPar}, \cite{Piro} and \cite[Beauville appendix]{Deb81}, we have two possibilities:
\begin{itemize}
\item $p_g(S)=q(S)=4$ and $S$ is the product of two curves of genus $2$;
\item $p_g(S)=q(S)=3$ and $S=(C_2 \times C_3)/\mathbb{Z}_2$, where $C_2$ is a smooth curve of genus $2$ with an elliptic involution $\tau_2$, $C_3$ is a smooth curve of genus $3$ with a free involution $\tau_3$, and the cyclic group $\mathbb{Z}_2$ acts freely on the product $C_2 \times C_3$ via the involution $\tau_2 \times \tau_3$. 
\end{itemize}
In both cases above,  $S$ contains no elliptic curves. On the other hand, all our surfaces of type $II$  contain four elliptic curves, coming from the strict transform of $D_A$.  Therefore the only possibility is $p_g(S)=q(S)=2$.
\end{proof}

\section{Surfaces of type $II$: classification}
\label{sec:classification-II}


\subsection{Holomorphic and anti-holomorphic diffeomorphisms of cyclic covers}

In this Section we discuss about lifts on cyclic covers of automorphisms or anti-automorphisms. We use methods and results of Pardini, see \cite{Pa91}.

Let $n \geq 2$ be an integer and let $D$ be an effective divisor
on a smooth projective variety $Y$,
such that
\begin{equation*}
\OO_{Y}(D) \simeq \mathcal{L}_{1}^{\otimes n} \simeq \mathcal{L}_{2}^{\otimes n}
\end{equation*}
for some line bundles $\mathcal{L}_{1},\,\mathcal{L}_{2} \in \mathrm{Pic}(Y)$.
Canonically associated to such data, there exists two simple $n$-cyclic covers
\begin{equation*}
\pi_{1} \colon X_{1}\to Y \quad \text{and} \quad \pi_{2} \colon X_{2}\to Y, 
\end{equation*}
both branched over $D$. We want to provide conditions ensuring that the two compact complex manifolds  
underlying $X_{1}$ and $X_{2}$ are biholomorphic or anti-biholomorphic. 

Following \cite[Section3]{KK}, let us denote
by $\Kl(Y)$ the group of holomorphic and anti-holomorphic diffeomorphisms of $Y$.
There is a short exact sequence
\begin{equation*}
1 \longrightarrow \mathrm{Aut}(Y) \longrightarrow \Kl(Y) \longrightarrow H \to 1,
\end{equation*}
where $H=\mathbb{Z}/2 \mathbb{Z}$ or $H=0$. 
To any anti-holomorphic element $\s \in\Kl(Y)$ we can associate a $\mathbb{C}$-antilinear map 
\begin{equation*}
\sigma^* \colon \CC(Y) \longrightarrow \CC(Y)
\end{equation*}
on the function field $\CC(Y)$ by defining
\begin{equation*}
(\sigma^*f)(x):= \overline{f(\sigma(x))}
\end{equation*}
for all $f \in \CC(Y)$. That action extends the usual action of $\aut(Y)$ on $\CC(Y)$ in a natural way (note that in \cite{KK} the notation $\sigma^*$ is used only for holomorphic maps, whereas for anti-holomorphic maps the corresponding notation is $\sigma^{!}$). 
We have $
 \sigma^{-1}( \mathrm{div \,}(f)) = \mathrm{div \,} (\sigma^*f),$ hence the action $\sigma^* \colon \mathrm{Div}(Y) \to \mathrm{Div}(Y)$ induces an action $\sigma^* \colon \mathrm{Pic}(Y) \to \mathrm{Pic}(Y)$, such that $\sigma^* K_Y = K_Y$, in the usual way. Namely, if the line bundle $\mathcal{L} \in \mathrm{Pic}(Y)$ is defined by the transition functions $\{g_{ij} \}$ with respect to the open cover $\{U_i\}$, then $\sigma^* \mathcal{L}$ is determined by the transition functions $\{\sigma^* g_{ij}\}$ with respect to the cover $\{\sigma^{-1}(U_i) \}$; furthermore, given a open set $U \subseteq Y$ and a holomorphic $r$-form $\omega = \sum g_{i_1 \ldots i_r} dx_{i_1} \wedge \ldots \wedge dx_{i_r} \in \Gamma(U, \, K_Y)$, its pullback via $\sigma$ is the holomorphic $r$-form   
 $\sigma^* \omega = \sum \sigma^*g_{i_1\ldots i_r} \, d(\sigma^*x_{i_1}) \wedge \ldots \wedge d(\sigma^*x_{i_r}) \in \Gamma(\sigma^{-1}(U), \, K_Y)$. 
 Moreover, the intersection numbers are also preserved by the action of any $\sigma \in \Kl(Y)$.  
   
 \begin{example} \label{ex:anti-holo-A}
Let $A_1 = V_1/ \Lambda_1$ and $A_2 = V_2/ \Lambda_2$ be two abelian varieties, and let $\sigma \colon A_2 \to A_1$ be an anti-holomorphic homomorphism with analytic representation $\mathfrak{S} \colon V_2 \to V_1$ and rational representation $\mathfrak{S} _{\Lambda} \colon \Lambda_2 \to \Lambda_1$ (note that $\mathfrak{S}$ is a $\mathbb{C}$-antilinear map). Then, for any $\mathcal{L}(h, \, \chi) \in \mathrm{Pic}(A_1)$, we have the following analogue of \eqref{eq:L-pullback}: 
\begin{equation} \label{eq:L-anti-holo-pullback}
\sigma^{*}\LL(h, \, \chi) = \LL(\overline{\mathfrak{S}^{*}h},\, \overline{\mathfrak{S}_{\Lambda}^{*}\chi}),
\end{equation}
In fact, looking at the transition function of the anti-holomorphic line bundle $\LL(\mathfrak{S}^{*}h, \, \mathfrak{S}_{\Lambda}^{*}\chi)$ we see that, in order to obtain a holomorphic one, we must take the conjugated hermitian form $\overline{\mathfrak{S}^{*}h}$
and the conjugated semicharacter $\overline{\mathfrak{S}_{\Lambda}^{*}\chi}$.  
\end{example} 

Let us now denote by $\Kl(Y, \, D)$ and $\aut(Y, \, D)$ the subgroups of
$\Kl(Y)$ and $\aut(Y)$ given by diffeomorphisms such that $\sigma^*D=D$. Again, $\aut(Y, \, D)$
is a normal subgroup of $\Kl(Y, \, D)$, of index $1$ or $2$. 
\begin{proposition} \label{prop:antiHolo and Holo} ${}$ 
\begin{itemize}
\item[$\boldsymbol{(i)}$] Let $\s\in\Kl(Y, \, D)$ be such that
$\s^{*}\mathcal{L}_{2} \simeq \mathcal{L}_{1}$. Then there exists a diffeomorphism 
$\tilde{\s} \colon X_{1}\to X_{2}$
such that $\sigma \circ \pi_{1}=\pi_{2}\circ\tilde{\s}$. Moreover, $\tilde{\sigma}$ is holomorphic $($respectively, anti-holomorphic$)$ if and only if $\sigma$ is so.
\item[$\boldsymbol{(ii)}$]
Let be $\sigma \in \Kl(Y, \, D)$. If $\s^* \mathcal{L}_{i}\simeq \mathcal{L}_{i}$, then $\sigma$  lifts to $X_i$ and there are $n$ different such lifts.  
\item[$\boldsymbol{(iii)}$] Let be $\tilde \s\in\Kl(X_i)$ such that $\tilde \s \circ \pi_i=\pi_i$. Then $\tilde s$ induces an automorphism $\s \in \Kl(Y,D)$. Moreover if either $D>0$ or $n=2$, one has $\s^* \mathcal{L}_{i}\simeq \mathcal{L}_{i}$. 
\end{itemize}
\end{proposition}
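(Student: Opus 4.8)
The plan is to work throughout with the explicit description of the simple $n$-cyclic covers following Pardini \cite{Pa91}: fixing the canonical section $s_D \in H^0(Y, \OO_Y(D))$ with $\mathrm{div}(s_D)=D$, the cover $\pi_i \colon X_i \to Y$ is $\mathrm{Spec}_Y\big(\bigoplus_{j=0}^{n-1}\LL_i^{-j}\big)$, whose $\OO_Y$-algebra structure is induced by the multiplication $\LL_i^{-n}\xrightarrow{s_D}\OO_Y$ coming from the chosen isomorphism $\LL_i^{\otimes n}\simeq\OO_Y(D)$. I will let $\sigma^*$ denote the action on $\Pic(Y)$ introduced before the statement, which in the anti-holomorphic case incorporates complex conjugation as in Example \ref{ex:anti-holo-A}; this lets me treat the holomorphic and anti-holomorphic cases by one and the same computation.

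For $\boldsymbol{(i)}$ I would pull back the cover $X_2$ along $\sigma$. Since $\sigma^*D=D$ as divisors, $\sigma^*\OO_Y(D)=\OO_Y(D)$ and $\sigma^*s_D$ is again a section cutting out $D$; as $Y$ is projective and connected, $\sigma^*s_D=c\,s_D$ for some $c\in\CC^{\times}$. Hence $\sigma^*X_2$ is the simple $n$-cyclic cover attached to the data $(\sigma^*\LL_2,\ \sigma^*s_D)$. Feeding in the hypothesis $\sigma^*\LL_2\simeq\LL_1$, and rescaling the isomorphism by an $n$-th root of $c$ so that it is compatible with the two identifications of $\LL_1^{\otimes n}$ and $(\sigma^*\LL_2)^{\otimes n}$ with $\OO_Y(D)$, I obtain an isomorphism $X_1\xrightarrow{\sim}\sigma^*X_2$ over $Y$. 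Composing with the projection $\sigma^*X_2\to X_2$ gives a diffeomorphism $\tilde\sigma\colon X_1\to X_2$ with $\pi_2\circ\tilde\sigma=\sigma\circ\pi_1$. The type assertion is then immediate: the maps $\pi_1,\pi_2$ are holomorphic and surjective, so the relation $\sigma\circ\pi_1=\pi_2\circ\tilde\sigma$ forces $\sigma$ and $\tilde\sigma$ to be simultaneously holomorphic or anti-holomorphic.

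Part $\boldsymbol{(ii)}$ is the special case $X_1=X_2=X_i$, $\LL_1=\LL_2=\LL_i$ of $\boldsymbol{(i)}$, which already produces one lift. For the count, any two lifts of $\sigma$ differ by an element of the Galois group $G=\mathrm{Gal}(X_i/Y)\cong\ZZ/n\ZZ$ acting by $w\mapsto\zeta^k w$ (with $\zeta$ a fixed primitive $n$-th root of unity), and conversely each such composition is again a lift; thus the lifts form a $G$-torsor and there are exactly $n$ of them.

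For $\boldsymbol{(iii)}$ the hypothesis means that $\tilde\sigma$ carries fibres of $\pi_i$ to fibres, i.e.\ it normalizes $G$; hence it descends to $\sigma\in\Kl(Y)$, and since $\sigma$ sends the branch locus of $\pi_i$ to itself we get $\sigma\in\Kl(Y,D)$. Writing $\tilde\sigma g\tilde\sigma^{-1}=g^{a}$ for a generator $g$ of $G$ and some $a\in(\ZZ/n\ZZ)^{\times}$, the action of $\tilde\sigma$ on the eigensheaf decomposition $\pi_{i*}\OO_{X_i}=\bigoplus_{j}\LL_i^{-j}$ yields $\sigma^*\LL_i^{-1}\simeq\LL_i^{-a}$, and it remains to force $a=1$. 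If $n=2$ this is automatic since $(\ZZ/2\ZZ)^{\times}=\{1\}$. If $D>0$, I would argue locally at a ramification point, where the cover is $\{w^{n}=u\}$ for a coordinate $u$ transverse to $D$: any lift $\tilde\sigma$ is linear in $w$ to leading order, so conjugation sends $g\colon w\mapsto\zeta w$ back to $w\mapsto\zeta w$, i.e.\ $a=1$. In either case $\sigma^*\LL_i\simeq\LL_i$. The main obstacle I anticipate is the bookkeeping of the conjugations in the anti-holomorphic case of $\boldsymbol{(i)}$ and, in $\boldsymbol{(iii)}$, the local ramification analysis pinning down $a=1$ when $D>0$; the remainder is formal manipulation of the cyclic-cover data.
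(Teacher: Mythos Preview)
Your treatments of $\boldsymbol{(i)}$ and $\boldsymbol{(ii)}$ are correct and coincide with the paper's: the only cosmetic difference is that you use the $\mathrm{Spec}_Y\big(\bigoplus_j\LL_i^{-j}\big)$ description while the paper works inside the total space $\mathbb{L}_i$ with the tautological section and the equation $t_i^n=p_i^*s$; the rescaling by an $n$-th root of the constant $c$ with $\sigma^*s_D=c\,s_D$ is exactly the paper's step.

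For $\boldsymbol{(iii)}$ you take a different route from the paper, and your local ramification argument contains a genuine error in the anti-holomorphic case. If $\tilde\sigma$ is anti-holomorphic, then near a ramification point it is $\CC$-\emph{anti}linear in $w$ to leading order, and conjugating $g\colon w\mapsto\zeta w$ by an antilinear map yields $w\mapsto\bar\zeta\,w=\zeta^{-1}w$; hence $\tilde\sigma g\tilde\sigma^{-1}=g^{-1}$, so your computed $a$ is $-1$, not $1$. Your preceding claim that the eigensheaf permutation gives $\sigma^*\LL_i^{-1}\simeq\LL_i^{-a}$ is likewise only valid for holomorphic $\sigma$: when $\tilde\sigma^*$ is antilinear, the $\zeta^j$-eigensheaf is carried to the $\zeta^{-aj}$-eigensheaf, so in fact $\sigma^*\LL_i^{-1}\simeq\LL_i^{\,a}$. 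The two sign errors cancel and the conclusion $\sigma^*\LL_i\simeq\LL_i$ survives, but as written the argument is incorrect, and this is precisely the ``bookkeeping of conjugations'' you flagged as a worry.

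The paper sidesteps this entirely by comparing Chern classes of the summands. Since $D>0$ on the projective variety $Y$ one has $[D]\neq 0$ in $H^2(Y,\QQ)$, hence the classes $0,-c_1(\LL_i),\dots,-(n-1)c_1(\LL_i)$ are pairwise distinct; any $\sigma\in\Kl(Y,D)$ fixes $[D]$ and therefore $c_1(\LL_i)$, so the permutation of the summands induced by $\tilde\sigma$ is forced to be the identity. This argument is insensitive to whether $\sigma$ is holomorphic or anti-holomorphic and avoids the delicate local computation altogether; by contrast your approach, once the conjugations are tracked correctly, gives an alternative (and more hands-on) proof via the action on tangent spaces at ramification points.
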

\begin{proof} 
Let us prove $\boldsymbol{(i)}$. Let $\mathbb{L}_1$, $\mathbb{L}_2$ be the total spaces of $\mathcal{L}_1$, $\mathcal{L}_2$ and let $p_1 \colon \mathbb{L}_1 \to Y$, $p_2\colon \mathbb{L}_2 \to Y$ be the corresponding projections. Let $s \in H^0(Y, \, \mathcal{O}_Y(D))$ be a section vanishing exactly along $D$ (if $D=0$, we take for $s$ the constant function $1$). If $t_i \in H^0(\mathbb{L}_i, \, p_i^* \mathcal{L}_i)$ denotes the tautological section, by \cite[I.17]{BHPV03} it follows that global equations for $X_1$ and $X_2$, as analytic subvarieties of $\mathbb{L}_1$ and $\mathbb{L}_2$, are provided by
\begin{equation*}
t_1^n-p_1^*s=0 \quad \mathrm{and} \quad t_2^n-p_2^*s=0,
\end{equation*} 
the covering maps $\pi_1$ and $\pi_2$ being induced by the restrictions of $p_1$ and $p_2$, respectively. Since $\sigma \in \Kl(Y, \, D)$, we have $\sigma^*s = \lambda s$ with $\lambda \in \mathbb{C}^*$.  Moreover, $\s^{*}\mathcal{L}_{2} \simeq \mathcal{L}_{1}$  implies that there exists a diffeomorphism $\tilde{\sigma} \colon \mathbb{L}_1 \to \mathbb{L}_2$ such that $p_2 \circ \tilde{\sigma}= \sigma \circ p_1$, hence
\begin{equation*}
\tilde{\sigma}^*(p_2^* s)=p_1^* (\sigma^* s)= \lambda \, p_1^* s. 
\end{equation*}
Moreover, we have $\tilde{\sigma}^* t_2 = \mu t_1$, with $\mu \in \mathbb{C}^*$. Up to rescaling $t_2$ by a constant factor we can assume $\mu = \sqrt[n]{\lambda}$, so that
\begin{equation*}
\tilde{\sigma}^*(t_2^n-p_2^*s)=\lambda(t_1^n-p_1^*s).
\end{equation*}
This means that $\tilde{\sigma}\colon \mathbb{L}_1 \to \mathbb{L}_2$ restricts to a diffeomorphism $\tilde{\sigma} \colon X_1 \to X_2$, which is compatible with the two covering maps $\pi_1$ and $\pi_2$. By construction, such a diffeomorphism is holomorphic (respectively, anti-holomorphic) if and only if $\sigma$ is so. \\
The part $\boldsymbol{(ii)}$ follows from part $\boldsymbol{(i)}$, setting $\mathcal{L}_1=\mathcal{L}_2$, so that $X_1=X_2$. Any two lifts differ by an automorphism of the cover that induces the identity on Y, thus there are $n$ different lifts of $\sigma$.\\
Let us prove part $\boldsymbol{(iii)}$. Since $\tilde \s$ preserves $\pi_i$, the induced automorphism $\sigma$ of $Y$ must preserve $D$.
Let $g$ be a generator of the Galois group $G$ of $\pi_i$.
There exists an integer $a$ prime with $n$ such that $\tilde \s$ satisfies $\tilde \s (gx)=g^a(\tilde \s x)$ for all $x \in X_i$. 
Thus the induced automorphism $\s^*$ of $\pi_{i*} \mathcal{O}_{X_i}$ permutes the summands of the decomposition under the action of $G$. 
Since the cyclic cover is simple, if $D > 0$ by looking at the Chern classes of the summands, one can see that we must have $\s^*(\mathcal{L}_i)=\mathcal{L}_i$.
 In the case of a double cover, the statement is true also in the  \'etale case, since there is only one non-trivial summand in the decomposition of $\pi_{i*}$.
\end{proof}

In the case of double covers induced by the Albanese map, we have the following converse of Proposition \ref{prop:antiHolo and Holo}, $\boldsymbol{(i)}$.

\begin{proposition} \label{prop:converse-holo}

Set $n=2$, let $Y=A$ be an abelian variety and assume that the double cover $\pi_i \colon X_i \to A$ is the Albanese map of $X_i$, for $i=1,\, 2$. If there is a holomorphic $($respectively, anti-holomorphic$)$ diffeomorphism $\tilde{\sigma} \colon X_1 \to X_2$, then there exists a holomorphic $($respectively, anti-holomorphic$)$ diffeomorphism  $\sigma \in \Kl(A, \, D)$ such that  $\sigma^{*}\mathcal{L}_{2}\simeq \mathcal{L}_{1}$. 
\end{proposition}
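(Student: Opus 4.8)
The plan is to reverse the construction of Proposition~\ref{prop:antiHolo and Holo}~$\boldsymbol{(i)}$ by \emph{descending} $\tilde{\sigma}$ along the two Albanese maps, exactly in the spirit of Lemma~\ref{lem:alb-isom}. I first dispose of the holomorphic case, and then reduce the anti-holomorphic case to it by conjugating the complex structure of the target. In the holomorphic case, since $\pi_2 \colon X_2 \to A$ is holomorphic and $\tilde{\sigma}\colon X_1 \to X_2$ is a biholomorphism, the composite $\pi_2 \circ \tilde{\sigma} \colon X_1 \to A$ is a holomorphic map to an abelian variety. As $\pi_1 \colon X_1 \to A$ is the Albanese map of $X_1$, its universal property (\cite[Chapter V]{Be96}) yields a holomorphic map $\sigma \colon A \to A$ (a homomorphism followed by a translation) with $\sigma \circ \pi_1 = \pi_2 \circ \tilde{\sigma}$. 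Both $\pi_1$ and $\pi_2 \circ \tilde{\sigma}$ are generically two-to-one onto $A$, so $\sigma$ is generically injective, hence a birational self-map of an abelian variety and therefore a biholomorphism, just as in Lemma~\ref{lem:alb-isom}. Thus $\sigma \in \aut(A)$.

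Next I verify the two required properties. Comparing branch divisors in $\sigma \circ \pi_1 = \pi_2 \circ \tilde{\sigma}$: the right-hand side ramifies over $D$ (the branch locus of $\pi_2$, since $\tilde{\sigma}$ is a biholomorphism), whereas the left-hand side ramifies over $\sigma(D)$, the image of the branch locus $D$ of $\pi_1$. Hence $\sigma(D)=D$, i.e. $\sigma^*D=D$ and $\sigma \in \Kl(A, \, D)$. To get $\sigma^*\mathcal{L}_2 \simeq \mathcal{L}_1$, I use the canonical eigensheaf decompositions $\pi_{i*}\mathcal{O}_{X_i} = \mathcal{O}_A \oplus \mathcal{L}_i^{-1}$ of the two double covers. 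The relation $\sigma \circ \pi_1 = \pi_2 \circ \tilde{\sigma}$ forces $\tilde{\sigma}$ to intertwine the two covering involutions $\iota_1, \, \iota_2$: indeed $\pi_2 \circ (\tilde{\sigma}\circ \iota_1) = \sigma \circ \pi_1 = \pi_2 \circ \tilde{\sigma}$, so $\tilde{\sigma}\circ \iota_1 = \iota_2^{\,\epsilon}\circ \tilde{\sigma}$, and $\epsilon=0$ would give $\iota_1=\mathrm{id}$, a contradiction. Consequently $\tilde{\sigma}_*\mathcal{O}_{X_1}\simeq \mathcal{O}_{X_2}$ respects the splitting into Galois-invariant and Galois-anti-invariant parts. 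Pushing forward, $\sigma_*(\mathcal{O}_A \oplus \mathcal{L}_1^{-1}) = \pi_{2*}\tilde{\sigma}_*\mathcal{O}_{X_1} = \mathcal{O}_A \oplus \mathcal{L}_2^{-1}$; since $\sigma_*\mathcal{O}_A=\mathcal{O}_A$, matching the anti-invariant summands gives $\sigma_*\mathcal{L}_1^{-1}\simeq \mathcal{L}_2^{-1}$, that is $(\sigma^{-1})^*\mathcal{L}_1 \simeq \mathcal{L}_2$, which is precisely $\sigma^*\mathcal{L}_2 \simeq \mathcal{L}_1$.

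For the anti-holomorphic case I conjugate the complex structure on the target. Writing $\overline{X_2}$ and $\overline{A}$ for the conjugate complex manifolds, the anti-holomorphic $\tilde{\sigma}\colon X_1 \to X_2$ is the same real map as a \emph{biholomorphism} $\tilde{\sigma}\colon X_1 \to \overline{X_2}$; moreover $\overline{\pi_2}\colon \overline{X_2}\to \overline{A}$ is again a double cover, it is the Albanese map of $\overline{X_2}$, and its associated line bundle is $\overline{\mathcal{L}_2}$. Running the holomorphic argument above on the pair $(X_1 \to A, \, \overline{X_2}\to \overline{A})$ (the reasoning used only the universal property of $\mathrm{Alb}(X_1)=A$ and the degree, and goes through verbatim for distinct target tori) produces a biholomorphism $\tau \colon A \to \overline{A}$ with $\tau(D)=\overline{D}$ and $\tau^*\overline{\mathcal{L}_2}\simeq \mathcal{L}_1$. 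Finally, a biholomorphism $\tau \colon A \to \overline{A}$ is the same underlying real map as an anti-biholomorphism $\sigma \colon A \to A$; the condition $\tau(D)=\overline{D}$ becomes $\sigma^*D=D$, so $\sigma \in \Kl(A, \, D)$, and, using the conjugation formula \eqref{eq:L-anti-holo-pullback} of Example~\ref{ex:anti-holo-A} identifying the antilinear pullback $\sigma^*$ with pullback of the conjugate bundle by $\tau$, the relation $\tau^*\overline{\mathcal{L}_2}\simeq \mathcal{L}_1$ reads exactly $\sigma^*\mathcal{L}_2 \simeq \mathcal{L}_1$.

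The routine part is the branch-locus comparison and the descent via the universal property. The delicate point, which I expect to be the main obstacle, is the anti-holomorphic case: one must be careful that conjugating the complex structure turns $\tilde\sigma$ into a genuine biholomorphism and $\overline{\pi_2}$ into an Albanese double cover with bundle $\overline{\mathcal{L}_2}$, and then that the resulting holomorphic condition $\tau^*\overline{\mathcal{L}_2}\simeq\mathcal{L}_1$ matches the paper's antilinear pullback $\sigma^*\mathcal{L}_2\simeq\mathcal{L}_1$ under the dictionary of Example~\ref{ex:anti-holo-A}. Getting this bookkeeping right is exactly what makes $\sigma$ land in the correct (anti-)holomorphic component of $\Kl(A,\,D)$ with the stated line-bundle condition.
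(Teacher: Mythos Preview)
Your proof is correct and follows essentially the same route as the paper: descend $\tilde{\sigma}$ to $\sigma$ via the universal property of the Albanese map, compare branch loci, and match the anti-invariant summands of $\pi_{i*}\mathcal{O}_{X_i}$ to obtain $\sigma^*\mathcal{L}_2\simeq\mathcal{L}_1$; the anti-holomorphic case is handled in both arguments by conjugating the complex structure. Your write-up is in fact more explicit than the paper's at the one subtle point---showing that $\tilde{\sigma}$ intertwines the covering involutions so that the eigensheaf decomposition is respected---which the paper asserts in a single sentence.
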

\begin{proof}
We first assume that $\tilde{\sigma}$ is holomorphic.
By the universal property of the Albanese map the morphism $\pi_2 \circ \tilde{\sigma} \colon X_1 \to A$ factors through $\pi_1$, in other words there exists $\sigma \colon A \to A$ such that $\sigma \circ \pi_1 = \pi_2 \circ \tilde{\sigma}$. The map $\sigma$ is an isomorphism because $\tilde{\sigma}$ is an isomorphism, then it sends the branch locus of $\pi_1$ to the branch locus of $\pi_2$, or equivalently $\sigma^*D=D$. Finally, looking at the direct image of the structural sheaf $\mathcal{O}_{X_1}$ we get
\begin{equation*}
\begin{split}
(\sigma \circ \pi_1)_* \mathcal{O}_{X_1} & = (\pi_2 \circ \tilde{\sigma})_* \mathcal{O}_{X_1}, \quad \textrm{that is}  \\
 \sigma_* (\mathcal{O}_A \oplus \mathcal{L}_1^{-1}) & = \pi_{2*} (\tilde{\sigma}_* \mathcal{O}_{X_1}),  \quad \; \; \textrm{that is}\\
 \mathcal{O}_A \oplus (\sigma_* \mathcal{L}_1^{-1}) & = \mathcal{O}_A \oplus  \mathcal{L}_2^{-1}.
\end{split}
\end{equation*}
The decomposition is preserved because the map $X_1 \to X_2$ is compatible with the involutions induced by the Albanese map, so we obtain $\sigma_* \mathcal{L}_1^{-1} \simeq \mathcal{L}_2^{-1}$ as desired.
If $\tilde{\sigma}$ is anti-holomorphic, it suffices to apply the same proof to the holomorphic diffeomorphism which is complex-conjugated to it.
\end{proof}
Summing up, Propositions \ref{prop:antiHolo and Holo} and \ref{prop:converse-holo} together imply

\begin{corollary} \label{cor:holo-antiholo}
With the same assumption as in Proposition \emph{\eqref{prop:converse-holo}}, there exists a holomorphic $($respectively, anti-holomorphic$)$ diffeomorphism $\tilde{\sigma} \colon X_1 \to X_2$ if and only if there exists a holomorphic $($respectively, anti-holomorphic$)$ element $\sigma \in \Kl(A, \, D)$ such that $\sigma^* \mathcal{L}_2 \simeq \mathcal{L}_1$.  
\end{corollary}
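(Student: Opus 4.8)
The plan is to derive the biconditional by splitting it into its two implications and matching each to one of the two preceding propositions, whose hypotheses are both satisfied under the assumptions inherited from Proposition \ref{prop:converse-holo} (namely $Y=A$ an abelian variety, $n=2$, and each $\pi_i$ equal to the Albanese map of $X_i$). Since Proposition \ref{prop:antiHolo and Holo} $\boldsymbol{(i)}$ is stated for arbitrary simple cyclic covers, it applies in particular in this restricted setting, so the two propositions are genuinely composable here.

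For the implication producing $\tilde\sigma$ from $\sigma$, I would argue as follows. Suppose we are given a holomorphic (respectively, anti-holomorphic) element $\sigma \in \Kl(A, \, D)$ with $\sigma^* \mathcal{L}_2 \simeq \mathcal{L}_1$. These are exactly the hypotheses of Proposition \ref{prop:antiHolo and Holo} $\boldsymbol{(i)}$, which yields a diffeomorphism $\tilde\sigma \colon X_1 \to X_2$ compatible with the two covering maps; moreover the final assertion of that proposition guarantees that $\tilde\sigma$ is holomorphic (respectively, anti-holomorphic) precisely when $\sigma$ is so. This settles one direction for the holomorphic and the anti-holomorphic cases at once. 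For the converse, I would start from a holomorphic (respectively, anti-holomorphic) diffeomorphism $\tilde\sigma \colon X_1 \to X_2$ and invoke Proposition \ref{prop:converse-holo} verbatim: under the standing assumption that each $\pi_i$ is the Albanese map, it returns a holomorphic (respectively, anti-holomorphic) $\sigma \in \Kl(A, \, D)$ with $\sigma^* \mathcal{L}_2 \simeq \mathcal{L}_1$. Combining the two implications gives the stated equivalence.

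I do not expect a genuine obstacle at the level of the corollary itself: it is a purely formal synthesis of the two propositions, and the only point worth verifying is that the holomorphic/anti-holomorphic bookkeeping is consistent across both statements, which it is, since each proposition tracks this distinction explicitly. The substantive work has already been carried out in the proofs of Propositions \ref{prop:antiHolo and Holo} and \ref{prop:converse-holo} --- respectively, the lifting of $\sigma$ to the total spaces of the line bundles $\mathcal{L}_i$ using the defining equations $t_i^n - p_i^* s = 0$, and the descent of $\tilde\sigma$ through the universal property of the Albanese map together with the splitting $\pi_{i*}\mathcal{O}_{X_i} \simeq \mathcal{O}_A \oplus \mathcal{L}_i^{-1}$. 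Hence the proof amounts to citing these two results in succession.
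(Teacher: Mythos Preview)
Your proposal is correct and matches the paper's own approach exactly: the paper states that Propositions \ref{prop:antiHolo and Holo} and \ref{prop:converse-holo} together imply the corollary, and you have spelled out precisely how --- one direction from Proposition \ref{prop:antiHolo and Holo} $\boldsymbol{(i)}$, the other from Proposition \ref{prop:converse-holo}.
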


\subsection{The uniqueness of the abelian surface $A$}

We follow the notation of Section \ref{subsec:example-II}. If $\chi_i$ is any non-trivial element of the group $\varSigma=\{\chi_0, \, \chi_1,$ $\chi_2, \, \chi_3\}$  for the sake of brevity we will write $A_i$, $D_{A_i}$ and  $f_i \colon A_i \to A'$ instead of $A_{\chi_i}$, $D_{A_{\chi_i}}$ and $f_{\chi_i} \colon A_{\chi_i} \to A'$, respectively. We will also denote by $\mathcal{L}_i \in \textrm{Pic}^0(A')$ the $2$-torsion line bundle corresponding to $\chi_i$, so that  
$f_{i*} \mathcal{O}_{A_i} =  \mathcal{O}_{A'} \oplus \mathcal{L}_i^{-1}$.

\begin{proposition} \label{prop:isom-chi}
The abelian surfaces $A_1$, $A_2$, $A_3$ are pairwise isomorphic. More precisely, for all $i, \, j \in \{1, \, 2, \, 3\}$ there exists an isomorphism $\tilde{\gamma}_{ij} \colon A_j \to A_i$ such that $\tilde{\gamma}_{ij}^*D_{A_i}= D_{A_j}$.
\end{proposition}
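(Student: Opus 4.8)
The plan is to obtain every $\tilde\gamma_{ij}$ by lifting to the étale double covers $f_i\colon A_i\to A'$ a single automorphism of the base $A'$ that preserves $D_{A'}$ and suitably permutes the characters $\chi_1,\chi_2,\chi_3$. Each $f_i$ is the double cover attached to the $2$-torsion line bundle $\mathcal{L}_i=\mathcal{L}(0,\chi_i)$, with $\mathcal{L}_i^{\otimes 2}\simeq\mathcal{O}_{A'}$, so we are exactly in the situation of Proposition \ref{prop:antiHolo and Holo} with $Y=A'$, $D=0$ and $n=2$. Concretely, if $\gamma\in\mathrm{Aut}(A')$ satisfies $\gamma^*D_{A'}=D_{A'}$ and $\gamma^*\mathcal{L}_i\simeq\mathcal{L}_j$ (equivalently $\gamma^*\chi_i=\chi_j$), then part $\boldsymbol{(i)}$ of that proposition produces a holomorphic isomorphism $\tilde\gamma_{ij}\colon A_j\to A_i$ with $\gamma\circ f_j=f_i\circ\tilde\gamma_{ij}$. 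The compatibility with the divisors is then automatic, since by the projection formula
\[
\tilde\gamma_{ij}^*D_{A_i}=\tilde\gamma_{ij}^*f_i^*D_{A'}=f_j^*\gamma^*D_{A'}=f_j^*D_{A'}=D_{A_j}.
\]
Thus the whole statement reduces to producing, for each ordered pair $(i,j)$, an automorphism of $A'$ fixing $D_{A'}$ and carrying $\chi_i$ to $\chi_j$.

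First I would exhibit one automorphism of order $3$ that does the job. Since $\Gamma_\zeta=\mathbb{Z}[\zeta]$ and $A'=E'\times E'$ with $\mathrm{End}(E')=\mathbb{Z}[\zeta]$, we have $\mathrm{Aut}(A')=\mathrm{GL}_2(\mathbb{Z}[\zeta])$; consider the automorphism $\gamma$ whose analytic representation is the matrix $M=\begin{psmallmatrix}0 & 1\\-1 & 1\end{psmallmatrix}$, acting on $V$ by $(z_1,z_2)\mapsto(z_2,\,-z_1+z_2)$. As $\det M=1$ we have $M\in\mathrm{SL}_2(\mathbb{Z})\subset\mathrm{GL}_2(\mathbb{Z}[\zeta])$, so $\gamma$ is well defined. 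Reading off the defining equations \eqref{eq:four-complex-lines}, the induced transformation on the slope $m=z_2/z_1$ is $m\mapsto(m-1)/m$; hence $\gamma$ sends $E_1'\mapsto E_2'\mapsto E_3'\mapsto E_1'$ and fixes $E_4'$, the last because $\zeta$ is a fixed point of $m\mapsto(m-1)/m$ (indeed $1-\zeta^{-1}=\zeta$). In particular $\gamma$ permutes the four components of $D_{A'}$, so $\gamma^*D_{A'}=D_{A'}$.

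Next I would compute the action of $\gamma^*$ on the three characters. On the basis \eqref{eq:basis-l-m} the map $M$ acts by $e_1\mapsto-e_2$, $e_2\mapsto e_1+e_2$, $\lambda_1\mapsto-\lambda_2$, $\lambda_2\mapsto\lambda_1+\lambda_2$, so by Proposition \ref{Prop:BirLangeEssential} applied to $\mathcal{L}_i=\mathcal{L}(0,\chi_i)$ we get $\gamma^*\chi_i=\chi_i\circ M$. Since each $\chi_i$ is a genuine character with values in $\{\pm1\}$ (Remark \ref{rem:twice}), evaluating on the ordered basis and using the list \eqref{eq:characters} gives, after a short computation,
\[
\gamma^*\chi_1=\chi_3,\qquad \gamma^*\chi_2=\chi_1,\qquad \gamma^*\chi_3=\chi_2,
\]
so $\gamma^*$ acts as a $3$-cycle on $\{\chi_1,\chi_2,\chi_3\}$ (consistently with Remark \ref{rem:duality-for-2-torsion}, where these characters correspond to the three $2$-torsion points of $A'$ lying off all the $E_k'$). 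Consequently, for every ordered pair $(i,j)$ some power $\gamma^k$ satisfies $(\gamma^k)^*\chi_i=\chi_j$, and the reduction of the first paragraph then yields all the desired isomorphisms $\tilde\gamma_{ij}$.

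The only genuinely delicate point is the construction of $\gamma$: one must find a single automorphism of $A'$ that simultaneously preserves the reducible divisor $D_{A'}$ and mixes the three characters of $\varSigma$. The conceptual reason this is possible is that the four slopes $\{0,\infty,1,\zeta\}$ of the curves $E_k'$ are in equianharmonic position, so the stabilizer of $D_{A'}$ in $\mathrm{Aut}(A')$ surjects onto the alternating group $\mathfrak{A}_4$ acting on the components $E_k'$, and the corresponding $\mathbb{Z}/3\mathbb{Z}$ quotient acts transitively on $\{\chi_1,\chi_2,\chi_3\}$; the matrix $M$ above simply realizes a generator of this $\mathbb{Z}/3\mathbb{Z}$. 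Once $\gamma$ is in hand, everything else is the routine character computation and the projection-formula identity already indicated.
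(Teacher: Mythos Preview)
Your proof is correct and follows essentially the same strategy as the paper: both reduce the statement, via Proposition~\ref{prop:antiHolo and Holo}, to exhibiting a single element $\gamma\in\mathrm{Aut}(A',D_{A'})$ whose pullback action cyclically permutes $\chi_1,\chi_2,\chi_3$, and both verify this by direct computation on the lattice basis. The only substantive difference is the choice of $\gamma$: the paper takes $\gamma(e_1)=-\zeta e_1$, $\gamma(e_2)=e_1+e_2$, an order-$3$ automorphism that uses the complex multiplication on $E'$ and fixes $E_1'$ while cycling $E_2',E_3',E_4'$; you instead take $M=\begin{psmallmatrix}0&1\\-1&1\end{psmallmatrix}\in\mathrm{SL}_2(\mathbb{Z})$, which fixes $E_4'$ and cycles $E_1',E_2',E_3'$. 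Your choice is arguably more elementary since it lies in $\mathrm{SL}_2(\mathbb{Z})$, but note a small inaccuracy: your $M$ satisfies $M^3=-I_2$, so $\gamma$ has order $6$ in $\mathrm{Aut}(A')$, not $3$ as you state. This does not affect the argument, because $-I_2$ acts trivially on the $\{\pm1\}$-valued characters and on $D_{A'}$, so $\gamma^*$ still acts as the $3$-cycle $(\chi_1\,\chi_3\,\chi_2)$ on $\varSigma$.
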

\begin{proof}
By Proposition \ref{prop:antiHolo and Holo} it suffices to prove that there exists an automorphism $\gamma_{ij}  \in \aut(A', \, D_{A'})$ such that $\gamma_{ij}^* \, \mathcal{L}_i = \mathcal{L}_j$. 
Consider the linear automorphism $\gamma \colon V \to V$ whose action on the standard basis is
$\gamma(e_1)= - \zeta e_1, \quad \gamma(e_2)= e_1 + e_2.$ 
It preserves the lattice $\Lambda_{A'}$, in fact we have 
\begin{equation} \label{eq:action-varphi-lattice}
\gamma(\lambda_1)=e_1-\lambda_1, \quad \gamma
(\lambda_2)= \lambda_1+\lambda_2, \quad \gamma(e_1)=- \lambda_1, \quad \gamma(e_2)=e_1+e_2,
\end{equation}
so it descends to an automorphism of $A'$ that we still denote by $\gamma \colon A' \to A'$. An easy calculation shows that $\gamma$ is an element of order $3$ in $\mathrm{Aut}(A', \, D_{A'})$, so it induces by pull-back an action of $\langle \gamma \rangle \simeq \mathbb{Z}/ 3 \mathbb{Z}$ on $\mathrm{NS}(A')$. Such an action is obtained by composing a character $\chi \colon \Lambda_{A'} \to \{\pm1\}$ with \eqref{eq:action-varphi-lattice}, and it is straightforward to check that it restricts to an action on the subgroup $\varSigma$ (defined in (\ref{Characters})), namely the one generated by the cyclic permutation $(\chi_1 \; \chi_3 \; \chi_2).$
This shows that $\langle \gamma \rangle$ acts transitively on the non-trivial characters of $\varSigma$. Since the action of $\gamma$ on the characters $\chi_i$ corresponds to the pullback action on the corresponding $2$-torsion divisors $\mathcal{L}_i$,  by setting $\gamma_{13}=\gamma_{21}=\gamma_{32}=\gamma$ and $\gamma_{31}=\gamma_{12}=\gamma_{23}=\gamma^2$ we obtain $\gamma_{ij}^* \, \mathcal{L}_i = \mathcal{L}_j$, as desired.
\end{proof}

\subsection{A rigidity result for surfaces of type $II$}

Let us first recall the notions of deformation equivalence and global rigidity, see \cite[Section 1]{Ca13}.
\begin{definition} \label{def:def-and-rigidity} ${}$
\begin{itemize}
\item Two complex surfaces $S_1$, $S_2$ are said to be \emph{direct deformation equivalent} if there is a proper holomorphic submersion with connected fibres $f \colon \mathcal{Y} \to \mathbb{D}$, where $\mathcal{Y}$ is a complex manifold and $\mathbb{D} \subset \mathbb{C}$ is the unit disk, and moreover there are two fibres of $f$ biholomorphic to $S_1$ and $S_2$, respectively; 
\item two complex surfaces $S_1$, $S_2$ are said to be \emph{deformation equivalent} if they belong to the same deformation equivalence class, where by deformation equivalence we mean the equivalence relation generated by direct deformation equivalence;
\item a complex surface $S$ is called \emph{globally rigid} if its deformation equivalence class consists of $S$ only, i.e. if every surface which is deformation equivalent to $S$ is actually isomorphic to $S$.
\end{itemize}
\end{definition}

The following result is a characterization of the equianharmonic product, that can be found in  \cite[Proposition 5]{KH04}.
\begin{proposition} \label{prop:char-Hirzebruch}
Let $A'$ be an abelian surface containing four elliptic curves, 
that intersect pairwise at the origin $o'$ and not elsewhere. Then $A'$ is isomorphic to the equianharmonic product $E' \times E'$ and, up to the action of $\mathrm{Aut}(A')$, the four curves are $E_1'$, $E_2'$, $E_3'$, $E_4'$.    
\end{proposition}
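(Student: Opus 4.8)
The plan is to exploit the group structure of $A'$, the key point being that an elliptic curve on an abelian surface through the origin is automatically an abelian subvariety. First I would note that, since the four curves $F_1,F_2,F_3,F_4$ meet pairwise at $o'$, each of them passes through $o'$; applying the universal property of the Albanese map with base point $o'$, each $F_i$ is then a one-dimensional subtorus of $A'=V/\Lambda_{A'}$, corresponding to a complex line $W_i\subset V$ with $W_i\cap\Lambda_{A'}$ of rank $2$. Distinct subtori have distinct tangent lines $W_i\neq W_j$, so they meet transversally at $o'$ automatically; the hypothesis ``not elsewhere'' then says $F_i\cap F_j=\{o'\}$ as a group. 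Consequently the addition map $\mu_{12}\colon F_1\times F_2\to A'$ is an isogeny whose kernel is isomorphic to $F_1\cap F_2=\{o'\}$, hence an isomorphism, and I may identify $A'\cong F_1\times F_2$ with $F_1$, $F_2$ the two coordinate axes.

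Next I would put $F_3$ and $F_4$ in these coordinates. Each is transverse to both axes, so it is the graph of an isogeny $g_k\colon F_1\to F_2$; the condition $F_k\cap F_1=\{o'\}$ forces $\ker g_k=0$, so $g_k$ is an isomorphism. Composing with the automorphism $\mathrm{id}\times g_3^{-1}$ of $A'$, I reduce to the normalized situation $A'=E\times E$ with $F_1,F_2$ the axes, $F_3=\{(x,x)\}$ the diagonal, and $F_4=\{(x,hx)\}$ the graph of $h:=g_3^{-1}g_4\in\mathrm{End}(E)$. The remaining incidences translate into $\ker h=0$ (from $F_4\cap F_1=\{o'\}$), $\ker(h-\mathrm{id})=0$ (from $F_4\cap F_3=\{o'\}$), and $h\neq\mathrm{id}$ (from $F_4\neq F_3$); that is, $h$ and $h-\mathrm{id}$ are both automorphisms of $E$ and $h\neq\mathrm{id}$.

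The heart of the matter, which I expect to be the main obstacle, is to deduce from these arithmetic constraints that $E$ is equianharmonic. Writing $E=\mathbb{C}/L$ and $h$ as multiplication by $\tau\in\mathbb{C}$ with $\tau L=L$, the fact that $\tau$ and $\tau-1$ map $L$ isomorphically onto itself means that multiplication by $\tau$ and by $\tau-1$ are represented, in a basis of $L$, by matrices in $\mathrm{GL}_2(\mathbb{Z})$; since their determinants are $|\tau|^2$ and $|\tau-1|^2$, this gives $|\tau|=|\tau-1|=1$. The two unit circles centred at $0$ and $1$ meet exactly at $\tau=\tfrac12\pm\tfrac{\sqrt3}{2}i=\zeta^{\pm1}$, so $h$ is multiplication by a primitive sixth root of unity. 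A lattice invariant under multiplication by $\zeta$ is a rank-one $\mathbb{Z}[\zeta]$-module, and since $\mathbb{Z}[\zeta]$ is a principal ideal domain it is homothetic to $\Gamma_{\zeta}=\mathbb{Z}\zeta\oplus\mathbb{Z}$; hence $E\cong E'$ and $A'\cong E'\times E'$ is the equianharmonic product. (Equivalently, one may invoke that the only order in an imaginary quadratic field admitting a unit $\tau$ with $\tau-1$ also a unit is $\mathbb{Z}[\zeta]$, the relation $\zeta^2-\zeta+1=0$ giving $\zeta-1=\zeta^2$.)

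Finally I would match the configuration with the standard one. In the normalized coordinates the four curves are the two axes, the diagonal, and the graph of $\zeta^{\pm1}$, which are precisely $E_1',E_2',E_3',E_4'$ up to relabelling. The two cases $\tau=\zeta$ and $\tau=\zeta^{-1}$ are exchanged by the coordinate swap $(x,y)\mapsto(y,x)$, an element of $\mathrm{Aut}(A')$ that permutes the axes, fixes the diagonal, and sends the graph of $\zeta$ to the graph of $\zeta^{-1}$; therefore in either case the unordered configuration $\{F_1,F_2,F_3,F_4\}$ lies in the $\mathrm{Aut}(A')$-orbit of $\{E_1',E_2',E_3',E_4'\}$, as claimed.
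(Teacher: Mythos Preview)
Your argument is correct and self-contained. The paper does not actually supply a proof of this proposition: it quotes the result from \cite{KH04} (Proposition~5 there) and mentions that an alternative, more conceptual proof via the Shioda--Mitani theory of abelian surfaces with maximal Picard number appears in \cite{Aide}. So there is no in-paper proof to compare with line by line.

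That said, your approach is essentially the direct one implicit in the Koike--Hirzebruch source: reduce to $A'\cong F_1\times F_2$ via the addition map, realise $F_3,F_4$ as graphs of automorphisms, and translate the incidence conditions into the arithmetic constraint that $h$ and $h-1$ are simultaneously units in $\mathrm{End}(E)$. Your computation that $|\tau|=|\tau-1|=1$ forces $\tau=\zeta^{\pm1}$, and that a $\zeta$-stable lattice is homothetic to $\Gamma_\zeta$ (using that $\mathbb{Z}[\zeta]$ is a PID), is clean and correct; the final normalization via the coordinate swap is exactly what is needed to absorb the sign ambiguity into $\mathrm{Aut}(A')$. The Shioda--Mitani route alluded to in the paper instead observes that the existence of four independent elliptic-curve classes forces $\rho(A')=4$, hence $A'$ is a singular abelian surface, and then identifies it among the (countably many) such surfaces via the transcendental lattice; your argument is more elementary and entirely sufficient here.
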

A more conceptual proof of Proposition \ref{prop:char-Hirzebruch}, exploiting some results of Shioda and Mitani on abelian surfaces with maximal Picard number, can be found in \cite{Aide}.
Using Proposition \ref{prop:char-Hirzebruch} we obtain:
\begin{theorem} \label{thm:class-type II}
Let $S$ be a surface with $p_g(S)=q(S)=2$, $K_S^2=8$ and Albanese map $\alpha \colon S \to A$ of degree $2$. If $S$ belongs to type $II$, then the pair $(A, \, D_A)$ is isomorphic to an \'etale double cover of the pair $(A', \, D_{A'})$, where $A'$ is the equianharmonic product and $D_{A'}=E_1'+E_2'+E_3'+E_4'$. In particular, all surfaces of type $II$ arise as in Proposition \emph{\ref{prop:type-II}}. Finally, all surfaces of type $II$ are globally rigid.
\end{theorem}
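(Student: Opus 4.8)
The plan is to split the proof into two parts: first realizing $(A, \, D_A)$ as an étale double cover of the equianharmonic product, and then deducing global rigidity. For the first part I would start from the description of $D_A$ furnished by Proposition \ref{prop:branch-type-II}, namely $D_A = E_1+E_2+E_3+E_4$ with the four elliptic curves meeting pairwise transversally exactly at the two points $p_1$, $p_2$. After translating so that $p_1 = o$, each $E_i$ is a smooth genus $1$ curve through the origin, hence an elliptic subgroup of $A$ (by the universal property of $\mathrm{Alb}(E_i)=E_i$); in particular each $E_i$ is a subgroup containing the second singular point $p_2$. Applying $(-1)_A$, which preserves every subgroup $E_i$ and hence $D_A$ and its singular locus $\{p_1, \, p_2\}$, and using that $p_1=o$ is fixed, I would conclude $-p_2=p_2$, i.e. $\epsilon := p_2$ is a nonzero $2$-torsion point.

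The key observation is then that the translation $t_\epsilon$ is a fixed-point-free involution preserving every $E_i$ (since $\epsilon \in E_i$ and $E_i$ is a subgroup) and swapping $p_1 \leftrightarrow p_2$. I would set $f \colon A \to A':= A/\langle \epsilon \rangle$, which is an étale double cover, and let $D_{A'}:=f(D_A)=f(E_1)+\cdots+f(E_4)$. Because $t_\epsilon$ preserves each $E_i$ one has $f^*D_{A'}=D_A$, so $(A, \, D_A)$ is an étale double cover of $(A', \, D_{A'})$ in the sense of the Notation; moreover each $E_i':=f(E_i)$ is an elliptic curve, the $E_i'$ all pass through $o':=f(o)$, they meet transversally there (as $f$ is a local isomorphism) and, since $f^{-1}(E_i'\cap E_j')=E_i\cap E_j=\{p_1, \, p_2\}$ and $E_iE_j=2$, they satisfy $E_i'E_j'=1$ and meet nowhere else. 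Proposition \ref{prop:char-Hirzebruch} then identifies $A'$ with the equianharmonic product $E'\times E'$ and, up to $\mathrm{Aut}(A')$, the curves $E_i'$ with $E_1',\ldots,E_4'$. To see that $S$ arises as in Proposition \ref{prop:type-II}, I would note that the étale cover $f$ corresponds to a nontrivial character $\chi \colon \Lambda_{A'} \to \{\pm 1\}$, and that $D_A$ is $2$-divisible in $\mathrm{Pic}(A)$ by the discussion of the Stein factorization in Section \ref{sec:Albanese}; the equivalence $\boldsymbol{(a)}\Leftrightarrow\boldsymbol{(c)}$ of Proposition \ref{prop:2-divisibility} then forces $\chi \in \varSigma\setminus\{\chi_0\}$, so that $(A, \, D_A)=(A_\chi, \, D_{A_\chi})$ and $S$ is exactly one of the double covers of Proposition \ref{prop:type-II}.

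For global rigidity the plan is to exploit the rigidity of the equianharmonic product. Given a deformation family $\mathcal{Y}\to \mathbb{D}$ with central fibre $S$, I would pass to the relative Albanese $\mathcal{A}\to \mathbb{D}$ and follow the four elliptic curves $E_1,\ldots,E_4\subset A$ (equivalently, the components of the branch divisor) in the family. Their classes are flat and remain of type $(1,1)$, and by Remark \ref{prop:neron-Severi-via-E} their Gram matrix has nonzero determinant $-3$; hence they span a rank $4$ sublattice of $\mathrm{NS}(A_t)$, forcing $\rho(A_t)=4$ for every $t$. Since abelian surfaces of maximal Picard number form a $0$-dimensional locus in moduli, the family $\mathcal{A}\to \mathbb{D}$ is isotrivial, so $A_t\cong A$ and $A_t'\cong E'\times E'$ throughout. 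Consequently $(A_t, \, D_{A_t})$ is constant, and as $S_t$ is determined by the discrete datum of a square root of $\mathcal{O}_{A_t}(D_{A_t})$ (only the sixteen $2$-torsion translates), $S$ cannot deform nontrivially; being thus an isolated point of its moduli space, and its deformation equivalence class being connected, $S$ is globally rigid.

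The hard part will be the rigidity step, and specifically the claim that the four elliptic curves persist as a genuine configuration of algebraic subvarieties in $\mathcal{A}\to\mathbb{D}$, so that the determinant $-3$ argument applies fibrewise and yields $\rho(A_t)=4$ for all $t$. One must rule out that in the family a branch component becomes reducible or loses its $(1,1)$ class; once this is secured, the isotriviality of $\mathcal{A}\to\mathbb{D}$, the characterization of $(A', \, D_{A'})$ in Proposition \ref{prop:char-Hirzebruch}, and the discreteness of the covering data combine routinely to give that $S$ is globally rigid.
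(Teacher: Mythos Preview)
Your argument for the first assertion---realizing $(A,\,D_A)$ as an \'etale double cover of the equianharmonic pair $(A',\,D_{A'})$---is correct and matches the paper's almost verbatim. The paper deduces that $p_2$ is $2$-torsion by noting that $\{p_1,\,p_2\}=E_i\cap E_j$ is an intersection of subgroups, hence itself a subgroup of order~$2$; your route via $(-1)_A$ is an equivalent variant. Your explicit appeal to Proposition~\ref{prop:2-divisibility} to force $\chi\in\varSigma\setminus\{\chi_0\}$ spells out a step the paper leaves implicit in the sentence ``all surfaces of type~$II$ arise as in Proposition~\ref{prop:type-II}''.

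The rigidity argument, however, has a genuine gap---the one you yourself flag in the last paragraph. You assert that the classes $[E_i]$ ``remain of type $(1,1)$'' along the family, but give no mechanism for this. Parallel transport of integral cohomology classes is automatic; preservation of Hodge type is not. Nothing you have written rules out that the nearby fibre $S_t$ is of type~$I$: in that case $D_{A_t}$ is \emph{irreducible} with a single sextuple point, there are no elliptic components to follow, and the flat continuations of the $[E_i]$ land in $H^2(A_t,\mathbb{Z})$ as transcendental classes. Your determinant~$-3$ computation and the maximal-Picard-rank conclusion then never get off the ground.

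The paper's rigidity argument is different and sidesteps this issue. It first observes that the arithmetic genus of the branch divisor---equivalently $D_A^2$---is constant in any flat deformation of the Albanese double cover, so $D_{A_t}^2=24$ throughout and every fibre stays of type~$II$ (the paper also remarks, citing \cite{Cat00}, that being of type~$I$, i.e.\ isogenous to a higher product, is a topological condition). Once this is known, the first part of the theorem already gives that there are only \emph{finitely many} isomorphism classes of type~$II$ surfaces: finitely many \'etale double covers $A\to A'$, and for each only sixteen square roots of $\mathcal{O}_A(D_A)$. Since the Gieseker moduli space \cite{G77} is separated and of finite type, a connected component containing finitely many closed points is a single point, and global rigidity follows. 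If you wished to repair your approach, the same invariance of $D_A^2$ would, via Proposition~\ref{prop:branch-type-II}, guarantee the persistence of the four elliptic curves and hence $\rho(A_t)=4$; but at that stage the paper's finite-count argument is shorter than the detour through isotriviality of the period map.
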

\begin{proof}
Let us consider the Stein factorization $\alpha_X \colon X \to A$ of the Albanese map $\alpha \colon S \to A$; then $\alpha_X$ is a finite double cover branched over $D_A$.

By Proposition \ref{prop:branch-type-II} we have $D_A=E_1+E_2+E_3+E_4$, where the $E_i$ are four elliptic curves intersecting pairwise transversally at two points $p_1$, $p_2$ and not elsewhere. Up to a translation, we may assume that $p_1$ coincides with the origin of $o \in A$. Then $p_2= a$, where $a$ is a non-zero, $2$-torsion point of $A$ (in fact the $E_i$ are subgroups of $A$, so the same is true for their intersection $\{o, \, a\}$).  

If we consider the abelian surface $A':= A/ \langle a \rangle$, then the projection $f \colon A \to A'$ is an isogeny of degree $2$. Moreover, setting $E_i' :=f(E_i)$, we see that $E_i', \ldots, E_4'$ are four elliptic curves intersecting pairwise transversally at the origin $o' \in A'$ and not elsewhere. Then the claim about $A'$ and $D_{A'}$ follows from Proposition \ref{prop:char-Hirzebruch}.

Let us finally provide our rigidity argument. First of all, we observe that surfaces of type $II$ cannot be specialization of surfaces of type $I$, for instance because every flat deformation of the Albanese map must preserve the arithmetic genus of the branch divisor, and so must preserve $D_A^2$ (in fact, we can state the stronger result that every flat limit of surfaces of type $I$ is still a surface of type $I$, because being isogenous to a higher product is a topological condition by \cite[Theorem 3.4]{Cat00}). From this, since there are finitely many possibilities for both the double covers $f \colon A \to A'$ and $a_X \colon X \to A$, it follows that $S$, being the minimal desingularization of $X$, belongs to only finitely many isomorphism classes. Therefore $S$ is globally rigid, because by \cite{G77} the moduli space of surfaces of general type is separated.
\end{proof}

\subsection{The groups $\protect\aut(A,\, D_A)$ and $\protect\Kl(A, \, D_A)$}

In the sequel we will write 
$A:=V/\Lambda_A $
in order to denote any of the pairwise isomorphic abelian surfaces $A_1$, $A_2$, $A_3$, see Proposition \ref{prop:isom-chi}. We choose for instance $\Lambda_A=\ker \chi_1$. By \eqref{eq:bases-ker-chi} we have 
\begin{equation}
\Lambda_A = \mathbb{Z} \mathbf{e}_1 \oplus \mathbb{Z} \mathbf{e}_2 \oplus  \mathbb{Z}  \mathbf{e}_3 \oplus \mathbb{Z}  \mathbf{e}_4,   
\end{equation}   
where
\begin{equation} \label{eq:e1-e2}
\mathbf{e}_1:=e_1, \quad  \mathbf{e}_2:=\lambda_1+e_2, \quad \mathbf{e}_3:=\lambda_2+e_2, \quad  \mathbf{e}_4:=2e_2.
\end{equation}
Note that $\mathbf{e}_1=(1, \, 0)$ and $\mathbf{e}_2 = (\zeta, \, 1)$ form a basis for $V$. 
\begin{remark} \label{rem:a}
It is straightforward to check that the class of the point $\zeta e_1 =  (\zeta, \, 0)$ in $A_1$ is contained in all the curves $E_1, \ldots, E_4$, so we obtain $a=\zeta e_1 + \Lambda_A$, where $a$ is the $2$-torsion point defined in the proof of Theorem \ref{thm:class-type II}.
\end{remark}


Let $\Gamma_{\zeta}$ and $E'$ be as in \eqref{eq:curve-E'} and set
\begin{equation*}
E'':=\mathbb{C}/\Gamma_{2 \zeta}, \quad \Gamma_{2 \zeta} := \mathbb{Z}[2 \zeta].
\end{equation*}
The next result implies that $A$ is actually isomorphic to the product  $E'' \times E'$.

\begin{lemma} \label{lem:Lambda-A}
We have $\Lambda_A = \Gamma_{2 \zeta} \, \mathbf{e}_1 \oplus \Gamma_{\zeta} \, \mathbf{e}_2.$
\end{lemma}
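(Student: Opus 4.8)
The plan is to compute $\Lambda_A$ directly in the $\mathbb{C}$-basis $\{\mathbf{e}_1, \, \mathbf{e}_2\}$ of $V$ recorded right after \eqref{eq:e1-e2}, and then simply read off the two coordinate lattices. First I would rewrite the remaining generators $\mathbf{e}_3 = \lambda_2 + e_2$ and $\mathbf{e}_4 = 2e_2$ of $\Lambda_A$ in terms of $\mathbf{e}_1 = e_1$ and $\mathbf{e}_2 = \lambda_1 + e_2 = \zeta e_1 + e_2$. From the latter relation we get $e_2 = \mathbf{e}_2 - \zeta\mathbf{e}_1$, hence $\lambda_2 = \zeta e_2 = \zeta\mathbf{e}_2 - \zeta^2\mathbf{e}_1$; using the defining relation $\zeta^2 = \zeta - 1$ this yields $\mathbf{e}_3 = (1 - 2\zeta)\mathbf{e}_1 + (1+\zeta)\mathbf{e}_2$ and $\mathbf{e}_4 = -2\zeta\,\mathbf{e}_1 + 2\mathbf{e}_2$. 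Writing every element of $V$ in the coordinates $(z, \, w)$ determined by $z\mathbf{e}_1 + w\mathbf{e}_2$, the lattice $\Lambda_A$ is therefore generated over $\mathbb{Z}$ by the four vectors $(1, \, 0)$, $(0, \, 1)$, $(1-2\zeta, \, 1+\zeta)$ and $(-2\zeta, \, 2)$.

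Next I would perform an integral change of basis to decouple the two coordinates. Subtracting $\mathbf{e}_1 + \mathbf{e}_2$ from $\mathbf{e}_3$ gives $(-2\zeta, \, \zeta)$, subtracting $2\mathbf{e}_2$ from $\mathbf{e}_4$ gives $(-2\zeta, \, 0)$, and the difference of these two is $(0, \, \zeta)$. Thus $\Lambda_A$ is equally generated by $(1, \, 0)$, $(-2\zeta, \, 0)$, $(0, \, 1)$, $(0, \, \zeta)$, a set that splits cleanly into generators supported on the first coordinate and generators supported on the second. The first-coordinate part spans $\mathbb{Z}\cdot 1 + \mathbb{Z}\cdot(2\zeta)$, which is exactly $\Gamma_{2\zeta}$ (recall $\Gamma_{2\zeta} = \mathbb{Z}[2\zeta] = \mathbb{Z} \oplus \mathbb{Z}(2\zeta)$ as a lattice, since $(2\zeta)^2 = 4\zeta^2 = 2(2\zeta) - 4$), while the second-coordinate part spans $\mathbb{Z}\cdot 1 + \mathbb{Z}\cdot\zeta = \Gamma_{\zeta}$. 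As the two groups of generators involve disjoint coordinates, the resulting sum is direct, and we obtain precisely $\Lambda_A = \Gamma_{2\zeta}\,\mathbf{e}_1 \oplus \Gamma_{\zeta}\,\mathbf{e}_2$.

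The computation is elementary; the only point requiring care — and the reason the answer is not the naive $\Gamma_{\zeta}\mathbf{e}_1 \oplus \Gamma_{\zeta}\mathbf{e}_2$ — is that when $\mathbf{e}_3$ and $\mathbf{e}_4$ are expressed in the basis $\{\mathbf{e}_1, \, \mathbf{e}_2\}$ they acquire $\mathbf{e}_1$-components, and these turn out to be the \emph{even} multiples $-2\zeta$ of $\zeta$. The main thing to verify is thus that no odd multiple of $\zeta$ survives in the first coordinate, i.e.\ that reducing the generators modulo the second coordinate leaves the sublattice $\mathbb{Z} + 2\mathbb{Z}\zeta$, not $\mathbb{Z} + \mathbb{Z}\zeta$, in the $\mathbf{e}_1$-slot. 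This is exactly what the factor $2$ in $\mathbf{e}_4 = 2e_2$ guarantees — it originates from the index-two sublattice $\ker\chi_1 \subset \Lambda_{A'}$ — and it is precisely this phenomenon that forces the first elliptic factor to be $E'' = \mathbb{C}/\Gamma_{2\zeta}$ rather than $E'$.
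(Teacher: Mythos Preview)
Your proof is correct and follows essentially the same strategy as the paper's: both show that the $\mathbb{Z}$-basis $\{\mathbf{e}_1,\mathbf{e}_2,\mathbf{e}_3,\mathbf{e}_4\}$ of $\Lambda_A$ and the set $\{\mathbf{e}_1,\mathbf{e}_2,2\zeta\,\mathbf{e}_1,\zeta\,\mathbf{e}_2\}$ are related by a transformation in $\mathsf{GL}(4,\mathbb{Z})$. The paper simply asserts that the base-change matrix between these two $\mathbb{Q}$-bases of $H_1(A,\mathbb{Q})$ lies in $\mathsf{GL}(4,\mathbb{Z})$, whereas you carry out the reduction explicitly via elementary integral operations; the content is the same.
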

\begin{proof}
We check that the base-change matrix between the $\mathbb{Q}$-bases $\mathbf{e}_1, \mathbf{e}_2, \mathbf{e}_3, \mathbf{e}_4$ and $\mathbf{e}_1, \mathbf{e}_2, 2\zeta \mathbf{e}_1, \zeta \mathbf{e}_2$ of $H_1 (A,\mathbb{Q})$ is in $\mathsf{GL}(4,\mathbb{Z})$.
\end{proof}
We will use  Lemma \ref{lem:Lambda-A} in order to describe the groups $\aut(A, \, D_A)$ and $\Kl (A, \, D_A)$. 
In what follows,
we will identify an automorphism $A \to A$ with the matrix of its analytic
representation $V \to V$ with respect to the standard basis $\{e_{1},\, e_{2} \}$. Moreover, we will write $\tau=\tau_a \colon A \to A$ for the translation by the $2$-torsion
point $a= \zeta e_1 + \Lambda_A$. 

\begin{proposition} \label{prop:The-group-Aut(A,D)}
The following holds.
\begin{itemize}
\item[$\boldsymbol{(a)}$]
We have
\begin{equation} \label{eq:aut(A,D)}
\mathrm{Aut}(A, \, D_A) = \mathrm{Aut}_0(A, \, D_A) \times \mathbb{Z}/2 \mathbb{Z},
\end{equation}
where $\mathbb{Z}/2 \mathbb{Z}$ is generated by the translation $\tau$, whereas $\mathrm{Aut}_0(A, \, D_A)$ is the subgroup of group automorphisms of $A$ generated by the elements
\begin{equation} \label{eq:g2-g3}
g_{2}=\left(\begin{array}{cc}
\zeta & -1\\
\zeta & - \zeta
\end{array}\right),\quad g_{3}=\left(\begin{array}{cc}
0 & \zeta-1\\
1-\zeta & \zeta-1
\end{array}\right).
\end{equation}
As an abstract group, $\mathrm{Aut}_0(A, \, D_A)$ is isomorphic to $\mathsf{SL}(2, \, \mathbb{F}_3);$ in particular, its order is $24$.
\item[$\boldsymbol{(b)}$]
The group $\Kl(A, \, D_A)$ is generated by $\aut(A, \, D_A)$ together with the anti-holomorphic involution $\sigma \colon A \to A$ induced by the $\mathbb{C}$-antilinear involution of $V$ given by  
\begin{equation} \label{eq:sigma}
(z_{1}, \, z_{2}) \mapsto ((\zeta-1)\bar{z}_{2}, \, (\zeta-1)\bar{z}_{1}).
\end{equation}
Furthermore, the two involutions $\tau$ and $\sigma$ commute, so that we can write 
\begin{equation} \label{eq:Kl(A,D)}
\Kl(A, \, D_A) = \mathrm{Kl}_0(A, \, D_A) \times\ZZ/2\ZZ,
\end{equation}
where $\mathrm{Kl}_0(A, \, D_A) $ contains $\mathrm{Aut}_0(A, \, D_A)$ as a subgroup of index $2$.
\end{itemize}
\end{proposition}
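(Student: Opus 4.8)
The plan is to exploit the fact that the two singular points of $D_A$, namely the origin $o$ and the $2$-torsion point $a$ (see Remark \ref{rem:a}), are canonically attached to the pair $(A, \, D_A)$, so that every element of $\mathrm{Aut}(A, \, D_A)$ — and more generally of $\Kl(A, \, D_A)$ — must permute them. First I would record that the translation $\tau = t_a$ lies in $\mathrm{Aut}(A, \, D_A)$: indeed each $E_i$ is an elliptic curve, hence a subgroup of $A$, passing through $a$, so $t_a(E_i) = E_i$ and $\tau^* D_A = D_A$. Given any $g \in \mathrm{Aut}(A, \, D_A)$, after composing with $\tau$ if necessary I may assume $g(o) = o$; since $g$ then fixes $o$ and preserves $\{o, \, a\}$ it also fixes $a$, and being origin-preserving it is a group automorphism (automorphisms of an abelian variety are affine). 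This shows $\mathrm{Aut}(A, \, D_A) = \mathrm{Aut}_0(A, \, D_A) \cdot \langle \tau \rangle$, where $\mathrm{Aut}_0(A, \, D_A)$ is the group of \emph{group} automorphisms of $A$ preserving $D_A$. As $\tau$ is a nontrivial translation it is not a group automorphism, so $\langle \tau \rangle \cap \mathrm{Aut}_0(A, \, D_A) = \{1\}$; moreover every element of $\mathrm{Aut}_0(A, \, D_A)$ fixes $a$ and hence commutes with $t_a$, yielding the direct product decomposition \eqref{eq:aut(A,D)}.

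The heart of part $\boldsymbol{(a)}$ is the computation of $\mathrm{Aut}_0(A, \, D_A)$. A group automorphism preserving $D_A$ permutes the four irreducible components $E_1, \ldots, E_4$, hence its analytic representation permutes the four tangent lines $V_1, \ldots, V_4 \subset V$ at the origin, giving a homomorphism $\mathrm{Aut}_0(A, \, D_A) \to \mathrm{PGL}(V) \cong \mathrm{PGL}_2(\mathbb{C})$. In the affine coordinate $z_2/z_1$ the four lines correspond to the points $0, \, \infty, \, 1, \, \zeta$ of $\mathbb{P}^1$, whose cross-ratio is the equianharmonic value $\bar\zeta = 1-\zeta$ (consistently with $E'$ being the equianharmonic curve); therefore the image lies in the projective symmetry group of an equianharmonic quadruple, which is $A_4$, of order $12$. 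The kernel consists of the scalar homotheties preserving $\Lambda_A$, and a short lattice computation shows these to be only $\pm \mathrm{id} = \pm (-1)_A$; thus $|\mathrm{Aut}_0(A, \, D_A)| \leq 24$. For the reverse inequality I would check directly, using Lemma \ref{lem:Lambda-A}, that $g_2$ and $g_3$ of \eqref{eq:g2-g3} preserve $\Lambda_A$ and permute $E_1, \ldots, E_4$, and that they satisfy the defining relations of the binary tetrahedral group $\mathsf{SL}(2, \, \mathbb{F}_3)$, so that $\langle g_2, \, g_3 \rangle$ has order $24$. Comparing with the upper bound gives $\mathrm{Aut}_0(A, \, D_A) = \langle g_2, \, g_3 \rangle \cong \mathsf{SL}(2, \, \mathbb{F}_3)$.

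For part $\boldsymbol{(b)}$, I would first verify that the $\mathbb{C}$-antilinear involution \eqref{eq:sigma} descends to $A$, i.e. that it preserves $\Lambda_A$, and that it maps each $E_i$ to some $E_j$, so that $\sigma \in \Kl(A, \, D_A)$ with $\sigma^* D_A = D_A$. Since $\mathrm{Aut}(A, \, D_A)$ sits in $\Kl(A, \, D_A)$ as a normal subgroup of index $1$ or $2$ and $\sigma$ is anti-holomorphic, the index is exactly $2$ and $\Kl(A, \, D_A) = \langle \mathrm{Aut}(A, \, D_A), \, \sigma \rangle$. A direct computation gives $\sigma(a) = a$: one finds $\sigma(\zeta e_1) = \zeta e_2 \equiv \zeta e_1 \pmod{\Lambda_A}$, since $\zeta e_1 - \zeta e_2 = \mathbf{e}_2 - \mathbf{e}_3 \in \Lambda_A$. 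Hence $\sigma$ commutes with $\tau = t_a$. Repeating the singular-point argument inside $\Kl(A, \, D_A)$, every element is, up to $\tau$, an origin-fixing holomorphic or anti-holomorphic automorphism preserving $D_A$, i.e. an element of $\mathrm{Kl}_0(A, \, D_A) := \langle \mathrm{Aut}_0(A, \, D_A), \, \sigma \rangle$; since all such elements fix the origin whereas $\tau$ does not, this gives the direct product \eqref{eq:Kl(A,D)}, with $\mathrm{Kl}_0(A, \, D_A) \supseteq \mathrm{Aut}_0(A, \, D_A)$ of index $2$.

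The main obstacle I anticipate is pinning down $\mathrm{Aut}_0(A, \, D_A)$ on the nose — not merely its order but its isomorphism type. The upper bound $24$ is clean once the cross-ratio is recognised as equianharmonic and the homothety kernel is shown to be $\{\pm (-1)_A\}$, but distinguishing $\mathsf{SL}(2, \, \mathbb{F}_3)$ from the other groups of order $24$ admitting an $A_4$-quotient (notably $A_4 \times \mathbb{Z}/2\mathbb{Z}$) requires showing that the double-transposition symmetries lift to elements of order $4$, equivalently that $(-1)_A$ is the \emph{unique} involution in $\mathrm{Aut}_0(A, \, D_A)$. This is precisely what the explicit generators $g_2, \, g_3$ and their relations are used to certify, so the bulk of the genuine work is the verification that these two matrices preserve $\Lambda_A$, permute the four curves, and generate a non-split central extension of $A_4$.
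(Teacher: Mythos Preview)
Your proposal is correct and follows essentially the same route as the paper: reduce to origin-fixing automorphisms via the translation $\tau$, identify the projective action on the four tangent directions with the $A_4$-symmetry of an equianharmonic quadruple, and then verify by direct computation that $g_2,\, g_3$ preserve $\Lambda_A$ and generate the binary tetrahedral group $\mathsf{SL}(2,\mathbb{F}_3)$. Your observation that every element of $\mathrm{Aut}_0(A,D_A)$ (and likewise $\sigma$) fixes $a$ and therefore commutes with $t_a$ is a mild conceptual improvement over the paper, which simply checks $[\tau,g_2]=[\tau,g_3]=[\tau,\sigma]=0$ by hand.
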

\begin{proof}
$\boldsymbol{(a)}$ Let us work using the basis $\{\mathbf{e}_1, \, \mathbf{e_2}\}$ of $V$ defined in $\eqref{eq:e1-e2}$. With respect to this basis, using \eqref{eq:four-complex-lines} we see that the four elliptic curves $E_{1},\dots,E_{4}$ have tangent spaces  
\begin{equation*}
\begin{aligned}
V_1 & = \mathrm{span}(\mathbf{e}_1), & V_2  = \mathrm{span}(-\zeta \mathbf{e}_1 + \mathbf{e}_2), \\
V_3 & = \mathrm{span}((1-\zeta) \mathbf{e}_1 + \mathbf{e}_2), & V_4  = \mathrm{span}((1- 2\zeta) \mathbf{e}_1 + \mathbf{e}_2). \\
\end{aligned}
\end{equation*}
Then, up to the translation $\tau$, we are looking at the subgroup $\mathrm{Aut}_0(A, \, D_A)$ of the group automorphisms of $A$ whose elements have matrix representation preserving the set of four points $ \mathscr{P}=\{P_1, \, P_2, \, P_3, \, P_4 \} \subset \mathbb{P}^1$, where     
\begin{equation*}
P_1=[1:\, 0], \quad P_2=[-\zeta: \, 1], \quad P_3=[1-\zeta: \, 1],\quad P_4=[1-2\zeta: \, 1].
\end{equation*}
The cross ratio $(P_1, \, P_2 ,\, P_3, \, P_4)$ equals $\zeta^{-1}$, hence $\mathscr{P}$ is an equianharmonic quadruple and so the group $\mathrm{PGL}(2, \, \mathbb{C})$ acts on it as the alternating group $\mathsf{A}_4$, see 
\cite[p. 817]{Maier07}. Such a group can be presented as
\begin{equation} \label{eq:presentation-A4}
\mathsf{A}_4 = \langle \alpha, \, \beta \; | \; \alpha^2 = \beta^3 = (\alpha \beta)^3 =1 \rangle,
\end{equation}
where $\alpha = (13)(24)$ and $\beta=(123)$, so we need to find matrices $\tilde{g}_2, \, \tilde{g}_3 \in \mathrm{GL}(2, \, \mathbb{C})$, acting as an isomorphism on the lattice $\Lambda_A$ and inducing the permutations $(P_1 \, P_3)(P_2 \, P_4)$ and $(P_1 \, P_2 \, P_3)$ on $\mathscr{P}$, respectively. 
Using Lemma \ref{lem:Lambda-A}, we see that $\tilde{g} \in \mathrm{GL}(2, \, \mathbb{C})$ preserves $\Lambda_A$ if and only if it has the form
\begin{equation*}
\tilde{g}=\left(\begin{array}{cc}
a_{11} & a_{12}\\
a_{21} & a_{22}
\end{array}\right), \quad \mathrm{with}\; a_{11} \in \Gamma_{2 \zeta}, \; \; a_{12} \in 2 \Gamma_{\zeta}, \; \; a_{21}, \, a_{22} \in \Gamma_{\zeta},
\end{equation*}
and its determinant belongs to the  group of units of $\Gamma_{\zeta}$, namely $\{\pm 1, \, \pm \zeta, \, \pm \zeta^2 \}$.

Now an elementary computation yields the matrices  $\pm \tilde{g}_{2}, \, \pm \tilde{g}_{3}$, where 
\begin{equation*}
\tilde{g}_{2}=\left(\begin{array}{cc}
1 & 2 \zeta -2\\
\zeta & -1
\end{array}\right), \quad \tilde{g}_{3}=\left(\begin{array}{cc}
-1 & 0\\
1- \zeta & \zeta
\end{array}\right),  
\end{equation*}
and from this we can obtain the matrix representations $g_2$, $g_3$ of our automorphisms in the basis $\{e_1, \, e_2 \}$ of $V$ by taking 
\begin{equation*}
g_i = N \tilde{g}_i N^{-1}, \quad \mathrm{with} \;  N =\left(\begin{array}{cc}
1 & \zeta\\
0 & 1
\end{array}\right).
\end{equation*}
This gives  \eqref{eq:g2-g3}. Setting $h=-I_2$ and lifting the presentation \eqref{eq:presentation-A4} we get the presentation
\begin{equation} 
\mathrm{Aut}_0(A, \, D_A)=\langle g_2, \, g_3, \, h \; | \;  h^2=1, \, \, g_2^2 = g_3^3 = (g_2g_3)^3= h \rangle,
\end{equation}
showing the isomorphism $\mathrm{Aut}_0(A, \, D_A) \simeq \mathsf{SL}(2, \, \mathbb{F}_3)$. 

Finally, a standard computation shows that $\tau$ commutes with both $g_2$ and $g_3$. Since $\mathrm{Aut}(A)$ is the semidirect product of the translation group of $A$ by the group automorphisms, it follows that $\mathrm{Aut}(A, \, D_A)$ is the direct product of $\langle \tau \rangle \simeq \mathbb{Z}/2 \mathbb{Z}$ by $\mathrm{Aut}_0(A, \, D_A)$, hence we obtain \eqref{eq:aut(A,D)}.
\\ \\
$\boldsymbol{(b)}$ Let us consider the $\mathbb{C}$-antilinear map  $V \to V$
\begin{equation*}
(z_{1}, \, z_{2})\mapsto(\bar{z}_{1}, \, (\zeta-1)\bar{z}_{1} + \zeta \bar{z}_{2}), 
\end{equation*}
expressed with respect to the basis $\{\mathbf{e}_1, \, \mathbf{e}_2 \}$.  
It preserves the lattice $\Lambda_A$ and so it defines an anti-holomorphic involution $\sigma\colon A \to A$, 
inducing the transposition $(P_1 \, P_2)$ on the set $\mathscr{P}=\{P_1, \, P_2, \, P_3, \, P_4\}$. Since $\aut(A,D)$
has index at most $2$ in $\Kl(A, \, D_A)$, it follows that $\aut(A, \, D_A)$ and
$\sigma$ generate $\Kl(A, \, D_A)$. Moreover, a change of coordinates allows us to come back to the basis $\{e_1, \, e_2\}$ and to obtain the expression of $\sigma$ given in \eqref{eq:sigma}.
The subgroup $\mathrm{Kl}_0(A, \, D_A)$ generated by $g_2, \, g_3$ and the involution $\sigma$ contains $\mathrm{Aut}_0(A, \, D_A)$ as a subgroup of index $2$; a straightforward computation now shows that $[\tau, \, \sigma]=0$, so \eqref{eq:Kl(A,D)} follows from \eqref{eq:aut(A,D)} and the proof is complete.
\end{proof}

\begin{remark} \label{rmk:central-extensions}
The proof of Proposition \ref{prop:The-group-Aut(A,D)} also shows that there are central extensions
\begin{equation*} \label{eq:central-extensions}
\begin{split}
&1 \to \langle -I_2 \rangle \longrightarrow \mathrm{Aut}_0(A, \, D_A) \longrightarrow \mathsf{A}_4 \longrightarrow 0, \\
&1 \to \langle -I_2 \rangle \longrightarrow \mathrm{Kl}_0(A, \, D_A) \longrightarrow \mathsf{S}_4 \longrightarrow 0
\end{split}
\end{equation*}
such that $-I_2=[g_2, \, g_3]^2$, where the commutator is defined as $[x, \, y] := xyx^{-1}y^{-1}$. In fact, $\mathrm{KL}_0(A, \, D_A)$ is isomorphic as an abstract group to $\mathsf{GL}(2, \, \mathbb{F}_3)$, see the proof of Theorem \ref{thm:The-number-of-Surfaces}. 
\end{remark}

\subsection{The action of $\protect\Kl(A,\, D_A)$ on the square roots of $\mathcal{O}_A(D_A)$}

The main result of this subsection is the following
\begin{theorem}
\label{thm:The-number-of-Surfaces} Up to isomorphism, there exist exactly two surfaces of type II. These surfaces $S_{1}, \, S_{2}$ have conjugated complex structures, in other words there exists an anti-holomorphic diffeomorphism $S_1 \to S_2$.
\end{theorem}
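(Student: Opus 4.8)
\emph{Strategy.} The plan is to reduce the theorem to a count of orbits of a finite group acting on a finite set, and then to perform that count via the Appell--Humbert calculus. By Theorem \ref{thm:class-type II} together with Proposition \ref{prop:isom-chi}, the pair $(A, \, D_A)$ underlying a surface of type $II$ is unique up to isomorphism, so every such surface is the minimal resolution of a double cover $\pi \colon X \to A$ branched over $D_A$, i.e. it is determined by a square root $\mathcal{L} \in \mathrm{Pic}(A)$ of $\mathcal{O}_A(D_A)$. The set $R$ of such square roots is a torsor under $\mathrm{Pic}^0(A)[2] \simeq (\mathbb{Z}/2\mathbb{Z})^4$, so $|R| = 16$. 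I would first note that every $\mathcal{L} \in R$ does yield a surface of type $II$: the invariants $K^2 = 8$ and $\chi = 1$ depend only on $D_A$ and its two ordinary quadruple points, while $p_g = q = 2$ follows exactly as in the proof of Proposition \ref{prop:type-II}, since each resulting surface has Albanese dimension two and contains the four elliptic curves coming from $D_A$, which rules out $q \geq 3$.

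\emph{Isomorphisms as symmetries of $(A, \, D_A)$.} By Corollary \ref{cor:holo-antiholo}, the surfaces $S_{\mathcal{L}_1}$ and $S_{\mathcal{L}_2}$ are biholomorphic if and only if there is a holomorphic $g \in \mathrm{Kl}(A, \, D_A)$, that is $g \in \mathrm{Aut}(A, \, D_A)$, with $g^* \mathcal{L}_2 \simeq \mathcal{L}_1$, and anti-biholomorphic if and only if such a $g$ exists which is anti-holomorphic. Hence the biholomorphism classes of type $II$ surfaces are precisely the orbits of $\mathrm{Aut}(A, \, D_A)$ on $R$, and complex conjugation is governed by the anti-holomorphic elements of $\mathrm{Kl}(A, \, D_A)$. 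So it suffices to determine this permutation action, which is the content of Proposition \ref{prop:permutation-action}.

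\emph{The permutation action.} Since $D_A$ is $2$-divisible, $\mathcal{O}_A(D_A) = \mathcal{L}(h, \, \chi)$ with $h = 2h'$ and $\chi$ a character (Remark \ref{rem:twice}). I would fix the distinguished symmetric square root $\mathcal{L}_0 = \mathcal{L}(h', \, \psi_0)$, computing $\psi_0$ from Lemma \ref{lem:SymDiv} by reading off the multiplicities of $D_A$ at the $2$-torsion points of $A$; writing a general square root as $\mathcal{L}_0 \otimes \mathcal{L}(0, \, \eta)$ identifies $R$ with the sixteen characters $\eta \colon \Lambda_A \to \{\pm 1\}$. On this model the generators $g_2, \, g_3$ of $\mathrm{Aut}_0(A, \, D_A)$ act by $\eta \mapsto g^* \eta$ via \eqref{eq:L-pullback}, the translation $\tau$ acts by the fixed $2$-torsion character $e^{2\pi i \, \mathrm{Im}\, h'(a, \, \cdot)}$ via \eqref{eq:translation-A}, and the anti-holomorphic involution $\sigma$ of Proposition \ref{prop:The-group-Aut(A,D)} acts via \eqref{eq:L-anti-holo-pullback}. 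Evaluating these generators I would read off the orbit decomposition; the expected outcome is that $\mathrm{Aut}(A, \, D_A)$ (order $48$) has exactly two orbits of length $8$, while $\sigma$ interchanges them, so that $\mathrm{Kl}(A, \, D_A)$ (order $96$) is transitive on $R$.

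\emph{Conclusion and main obstacle.} Two orbits give exactly two biholomorphism classes $S_1, \, S_2$, proving the first assertion. Since $\sigma$ carries the orbit of $\mathcal{L}_1$ onto that of $\mathcal{L}_2$, there is an anti-holomorphic $\sigma \in \mathrm{Kl}(A, \, D_A)$ with $\sigma^* \mathcal{L}_2 \simeq \mathcal{L}_1$; by Proposition \ref{prop:antiHolo and Holo}$\,\boldsymbol{(i)}$ it lifts to an anti-holomorphic diffeomorphism $X_1 \to X_2$, which descends to one between the minimal resolutions $S_1 \to S_2$. Thus $S_1$ and $S_2$ are complex conjugate. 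I expect the main difficulty to lie entirely in the previous step: determining $\psi_0$ correctly via Lemma \ref{lem:SymDiv}, and then tracking the combined linear, translation, and antilinear action on $(\mathbb{Z}/2\mathbb{Z})^4$ precisely enough to prove there are exactly two orbits (neither one nor more) and, crucially, that $\sigma$ exchanges them rather than preserving each --- which is what makes the two surfaces conjugate to one another.
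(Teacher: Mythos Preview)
Your overall strategy is the same as the paper's: reduce to an orbit count on the sixteen square roots of $\mathcal{O}_A(D_A)$ via Corollary \ref{cor:holo-antiholo}, compute the permutation action of the generators from Proposition \ref{prop:The-group-Aut(A,D)}, and read off two $\mathrm{Aut}(A,\,D_A)$-orbits exchanged by $\sigma$.

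There is, however, a genuine technical slip in your parametrization of $R$. You propose to fix a \emph{symmetric} square root $\mathcal{L}_0=\mathcal{L}(h',\,\psi_0)$ and compute $\psi_0$ from Lemma \ref{lem:SymDiv}. But no symmetric square root exists here: a symmetric line bundle has $\{\pm 1\}$-valued semicharacter, so its square would have trivial semicharacter, whereas $\chi_{D_A}=(-1,\,1,\,-1,\,1)$ on the basis $\{\mathbf{e}_1,\dots,\mathbf{e}_4\}$ is nontrivial. Concretely, any $\psi$ with $\psi^2=\chi_{D_A}$ must satisfy $\psi(\mathbf{e}_1),\,\psi(\mathbf{e}_3)\in\{\pm i\}$; this is exactly why the paper's list \eqref{eq:semi-characters} has entries in $\{\pm 1,\,\pm i\}$. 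Lemma \ref{lem:SymDiv} therefore applies to $\mathcal{O}_A(D_A)$ itself, not to its square roots. A related consequence is that, after choosing any basepoint $\mathcal{L}_0$ and identifying $R$ with characters $\eta$, the action of $g\in\mathrm{Aut}_0(A,\,D_A)$ is \emph{affine}, $\eta\mapsto \eta_g\cdot g^*\eta$ with $\eta_g$ determined by $g^*\mathcal{L}_0=\mathcal{L}_0\otimes\mathcal{L}(0,\,\eta_g)$, rather than the linear $\eta\mapsto g^*\eta$ you wrote; Proposition \ref{prop:permutation-action} shows that no $\psi_k$ is fixed by all generators, so these cocycles $\eta_g$ are nontrivial and cannot be ignored in the orbit count.

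The paper avoids both issues by working directly with the sixteen semicharacters $\psi_k$ (Proposition \ref{prop:square-roots-DA}) and computing $g^*\psi_k$ for each generator (Proposition \ref{prop:permutation-action}), then checking the orbit structure. Your plan goes through once you make the same correction: drop the symmetry assumption, list the $\psi_k$ from $\psi_k^2=\chi_{D_A}$, and track the action on the $\psi_k$ themselves rather than on a linearized $\eta$-coordinate.
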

In order to prove this result, we must study the action of the groups $\mathrm{Aut}(A, \, D_A)$ and $\mathrm{Kl}(A, \, D_A)$ on the sixteen square roots $\mathcal{L}_1, \ldots, \mathcal{L}_{16}$ of the line bundle $\mathcal{O}_A(D_A) \in \mathrm{Pic}(A)$. The Appell-Humbert data of such square roots are described in the following
\begin{proposition} \label{prop:square-roots-DA}
For $k \in \{1, \ldots, 16\},$ we have
$\mathcal{L}_k = \mathcal{L} \left(\frac{1}{2}\,h_A, \, \psi_k \right)$
where
\begin{itemize}
\item $h_A \colon V \times V \to \mathbb{C}$ is the hermitian form on $V$ whose associated alternating form $\mathrm{Im}\, h_A$ assumes the following values at the generators $\mathbf{e}_1, \ldots, \mathbf{e}_4$ of $\Lambda_A:$
\begin{table}[H] 
\begin{center}
\begin{tabular}{c|c|c|c|c|c|c} 
$(\cdot, \, \cdot)$   & $(\mathbf{e}_1, \, \mathbf{e}_2)$ & $(\mathbf{e}_1, \, \mathbf{e}_3)$ & $(\mathbf{e}_1, \, \mathbf{e}_4)$ & $(\mathbf{e}_2, \, \mathbf{e}_3)$ & $(\mathbf{e}_2, \, \mathbf{e}_4) $ & $(\mathbf{e}_3, \, \mathbf{e}_4)$ \\
\hline
$\mathrm{Im} \; h_A(\cdot, \, \cdot)$ & $-4$ & $0$ & $-2$ & $-6$ & $-4$ & $6$ \\
\end{tabular} \caption{The values of $\mathrm{Im}\, h_A$ at the generators of $\Lambda_{A}$} \label{table:imh_A}  
\end{center}
\end{table}
\item Using the notation $\psi_k := (\psi_k(\mathbf{e}_1), \, \psi_k(\mathbf{e}_2), \, \psi_k(\mathbf{e}_3), \, \psi_k(\mathbf{e}_4)),$
the semicharacters $\psi_k \colon \Lambda_A \to \mathbb{C}^*$ are as follows$:$ 
\begin{equation} \label{eq:semi-characters}
\begin{array}{lll}
\psi_1 \ := (i, \, 1, \, i, \, 1), & & \\
\psi_2\ :=(-i, \, -1, \, i, -1),    & \psi_3\ := (i, \, -1, \, -i, \, 1),  & \psi_4\ :=(-i, \, 1, \, -i, \, -1),\\ 
\psi_5\ :=(i, \, 1, \, -i, \, 1),   &  \psi_6\ :=(-i, \, 1, \, i, \,1),    & \psi_7\ :=(-i, \, 1,\, -i, \, 1),\\ 
\psi_8\ :=(i, \, 1, \, i, \, -1),   &  \psi_9\ :=(i, \, -1, \, i, \, -1),  & \psi_{10}\ :=(-i, \, 1, \, i, \,-1),\\
\psi_{11} :=(i, \, 1,\, -i, \, -1), & \psi_{12}:=(-i, \, -1, \, -i, \, 1), & \psi_{13} :=(i, \, -1, \, i, \, 1),\\
\psi_{14}:=(i, \, -1, \, -i, \,-1), &  \psi_{15}:=(-i, \, -1,\, i, \, 1),  &  \psi_{16} :=(-i, \, -1, \, -i, \, -1).
\end{array}
\end{equation}  
\end{itemize}
\end{proposition}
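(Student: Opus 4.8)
The plan is to compute the Appell--Humbert data of $\mathcal{O}_A(D_A)$ and then to enumerate its sixteen square roots. First I would use that the isogeny $f \colon A \to A'$ lifts to the identity $1_V \colon V \to V$, exactly as in the proof of Proposition \ref{prop:2-divisibility}. Feeding the analytic representation $1_V$ and the rational representation $\Lambda_A \hookrightarrow \Lambda_{A'}$ into Proposition \ref{Prop:BirLangeEssential}, I obtain
\[
\mathcal{O}_A(D_A) = f^{*}\mathcal{O}_{A'}(D_{A'}) = \mathcal{L}\!\left(h, \ \chi_{D_{A'}}|_{\Lambda_A}\right),
\]
where $h$ and $\chi_{D_{A'}}$ are the hermitian form and the semicharacter produced in Proposition \ref{pro:class_DA'}. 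Hence the form $h_A$ representing $\mathcal{O}_A(D_A)$ in $\mathrm{NS}(A)$ is literally $h$, and the semicharacter of $\mathcal{O}_A(D_A)$ is the restriction of $\chi_{D_{A'}}$ to $\Lambda_A$.

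Second, I would translate these data into the basis $\{\mathbf{e}_1, \ldots, \mathbf{e}_4\}$ of $\Lambda_A$ fixed in \eqref{eq:e1-e2}. Writing each $\mathbf{e}_i$ in terms of $\lambda_1, \lambda_2, e_1, e_2$ and using that $\mathrm{Im}\, h$ is alternating, the six numbers $\mathrm{Im}\, h_A(\mathbf{e}_i, \mathbf{e}_j)$ follow at once from the entries of Table \ref{table:imh}; this is the computation recorded in Table \ref{table:imh_A}. I would stress that all these values are even, so that $\mathrm{Im}\, \tfrac12 h_A$ is integer-valued on $\Lambda_A$ and $\tfrac12 h_A$ is a genuine class of $\mathrm{NS}(A)$ --- which is just the $2$-divisibility granted by Proposition \ref{prop:2-divisibility}. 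Evaluating the explicit formula for $\chi_{D_{A'}}$ at the four generators, after rewriting each $\mathbf{e}_i$ in the coordinates $(a_1, a_2, a_3, a_4)$ of Proposition \ref{pro:class_DA'}, then gives the semicharacter of $\mathcal{O}_A(D_A)$ as $(-1, \, 1, \, -1, \, 1)$ on $(\mathbf{e}_1, \mathbf{e}_2, \mathbf{e}_3, \mathbf{e}_4)$.

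Finally, I would enumerate the square roots. Any square root of $\mathcal{O}_A(D_A)$ has the shape $\mathcal{L}(\tfrac12 h_A, \, \psi)$ with $\psi$ a semicharacter for $\tfrac12 h_A$ satisfying $\psi^2 = \chi_{D_{A'}}|_{\Lambda_A}$; moreover two square roots differ by an element of $\mathrm{Pic}^0(A)[2] \simeq (\mathbb{Z}/2\mathbb{Z})^4$, so there are exactly sixteen of them. Because $\Lambda_A$ is free of rank $4$ and $\mathrm{Im}\, \tfrac12 h_A$ is integral, a semicharacter for $\tfrac12 h_A$ is obtained from any prescription of values at the generators $\mathbf{e}_i$ (the integrality being precisely what makes the semicharacter relation consistent), and two such semicharacters agreeing on the $\mathbf{e}_i$ coincide. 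Thus the condition $\psi^2 = \chi_{D_{A'}}|_{\Lambda_A}$ collapses to $\psi(\mathbf{e}_i)^2 = \chi_{D_{A'}}(\mathbf{e}_i)$, forcing $\psi(\mathbf{e}_1), \psi(\mathbf{e}_3) \in \{\pm i\}$ and $\psi(\mathbf{e}_2), \psi(\mathbf{e}_4) \in \{\pm 1\}$; the sixteen resulting assignments are exactly the $\psi_k$ listed in \eqref{eq:semi-characters}.

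I expect the only real difficulty to be bookkeeping rather than ideas: correctly porting the semicharacter formula of Proposition \ref{pro:class_DA'}, which lives in the $\Lambda_{A'}$-coordinates, into the new basis of the index-two sublattice $\Lambda_A$, and tracking the signs of the alternating form through the change of generators. There is no genuine obstruction, since freeness of $\Lambda_A$ removes any cocycle obstacle to prescribing a semicharacter on generators. As a consistency check on the whole computation I would verify directly that $\psi_1 = (i, \, 1, \, i, \, 1)$ squares componentwise to $(-1, \, 1, \, -1, \, 1)$, matching the restricted semicharacter of $\mathcal{O}_A(D_A)$ computed above.
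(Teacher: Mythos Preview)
Your proposal is correct and follows essentially the same route as the paper: pull back the Appell--Humbert data of $\mathcal{O}_{A'}(D_{A'})$ along $f$ via Proposition \ref{Prop:BirLangeEssential}, rewrite $\mathrm{Im}\,h$ and $\chi_{D_{A'}}$ in the basis $\{\mathbf{e}_1,\ldots,\mathbf{e}_4\}$ of $\Lambda_A$, and then solve $2h_k=h_A$, $\psi_k^2=\chi_{D_A}$ on generators. The paper phrases the last step as ``fix $\psi_1$ and twist by the fifteen nontrivial $\{\pm1\}$-characters of $\Lambda_A$'', which is exactly your enumeration of the $2^4$ sign choices.
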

\begin{proof}
Let us consider the double cover $f \colon A \to A'$.
If the hermitian form $h \colon V \times V \to \mathbb{C}$ and the semicharacter $\chi_{D_{A'}} \colon \Lambda_{A'} \to \{ \pm1 \}$ are as in Proposition \ref{pro:class_DA'} and Table \ref{table:imh}, then we have $\mathcal{O}_A(D_A) = \mathcal{L}(h_A, \, \chi_{D_A})$, where 
$h_A=f^*h, \quad \chi_{D_A} = f^* \chi_{D_{A'}}.$
From this, using \eqref{eq:e1-e2} we can compute the values of the alternating form $\mathrm{Im}\, h_A$ and of the semicharacter  $\chi_{D_A}$ at $\mathbf{e_1}, \ldots, \, \mathbf{e}_4$, obtaining Table \ref{table:imh_A} and 
\begin{equation*}
\chi_{D_A} = (\chi_{D_A}(\mathbf{e}_1), \, \chi_{D_A}(\mathbf{e}_2), \, \chi_{D_A}(\mathbf{e}_3), \, \chi_{D_A}(\mathbf{e}_4)) = (-1, \, 1, \, -1, \, 1).
\end{equation*} 
Then, setting $\mathcal{L}_k = \mathcal{L}(h_k, \, \psi_k)$, the equality $\mathcal{L}_k^{\otimes 2} = \mathcal{O}_A(D_A)$ implies
\begin{equation*}
2h_k = h_A, \quad \psi_k^2 = \chi_{D_A},
\end{equation*}  
hence $h_k = \frac{1}{2} h_A$ for all $k$. Moreover we can set $\psi_1 = (i, \, 1, \, i, \, 1)$, whereas the remaining $15$ semicharacters $\psi_k$ are obtained by multiplying $\psi_1$ by the $15$ non-trivial characters $\Lambda_A \to \{ \pm 1 \}$.
\end{proof} 
The hermitian form $h_A$ is $\mathrm{Kl}(A, \, D_A)$-invariant (accordingly with the fact that the divisor $D_A$ is so), hence the action of $\mathrm{Kl}(A, \, D_A)$ on the set $\{\mathcal{L}_1, \ldots, \mathcal{L}_{16} \}$ is completely determined by its permutation action on the set $\{\psi_1, \ldots, \psi_{16} \}$, namely  
\begin{equation*}
 \varrho \colon \mathrm{Kl}(A, D_A) \longrightarrow \mathsf{Perm}(\psi_1, \ldots, \psi_{16}), \quad 
 \varrho(g) (\psi_k) := g^* \psi_k.
\end{equation*}
Let us now identify the group $ \mathsf{Perm}(\psi_1, \ldots, \psi_{16})$ with the symmetric group $\mathsf{S}_{16}$ on the symbols $\{1, \ldots, 16\}$, where we multiply permutations from the left to the right, for instance $(1 \, 2)(1\, 3)=(1\,2\,3)$. Then we get the following 
\begin{proposition} \label{prop:permutation-action}
With the notation of \emph{Proposition \ref{prop:The-group-Aut(A,D)}}, we have
\begin{equation*}
\begin{split}
\varrho(g_2)& =(1 \; \, 13 \; \,  7\;\, 12)(2 \;\, 9 \;\, 14 \;\, 16)(3 \;\, 5 \;\, 15\;\, 6)(4 \;\, 11 \;\, 8 \;\, 10), \\
\varrho(g_3)& = (1 \; \, 13\; \, 5 \; \, 7 \; \, 12 \; \, 6)(2 \; \, 4 \; \, 11 \; \, 14 \; \, 8 \; \, 10) (3 \; \, 15) (9 \; \, 16), \\
\varrho(-I_2)=\varrho(g_2^2) = \varrho(g_3^3)= \varrho(\tau)& = (1 \; \, 7)(2 \; \, 14) (3 \; \, 15)(4 \; \, 8)(5 \; \, 6)(9 \; \, 16)(10 \; \, 11)(12 \; \, 13), \\
\varrho(\sigma) & = (1 \; \, 14)(2 \; \, 7) (3 \; \, 16)(4 \; \, 5)(6 \; \, 8)(9 \; \, 15)(10 \; \, 12)(11 \; \, 13). 
\end{split}
\end{equation*}
\end{proposition}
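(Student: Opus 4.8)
The plan is to compute, one generator at a time, the pullback $g^{*}\psi_{k}$ for $g\in\{g_{2},\,g_{3},\,-I_{2},\,\tau,\,\sigma\}$ directly from the Appell--Humbert pullback formulas, and then to read off the induced permutation of the index set $\{1,\dots,16\}$. The starting observation, already recorded before the statement, is that $h_{A}$ is $\Kl(A,\,D_{A})$-invariant; hence the hermitian part of every $\mathcal{L}_{k}=\mathcal{L}(\tfrac{1}{2}h_{A},\,\psi_{k})$ is preserved and only the semicharacter is moved, so each $\varrho(g)$ really is a permutation of the sixteen $\psi_{k}$. A second simplification comes from the proof of Proposition \ref{prop:square-roots-DA}: since $\psi_{k}=\psi_{1}\cdot\eta_{k}$ with $\eta_{k}\colon\Lambda_{A}\to\{\pm1\}$ a character, it suffices to compute the action on the distinguished semicharacter $\psi_{1}$ and the (linear) action on the fifteen $\{\pm1\}$-valued characters, the latter being nothing but the dual of $g_{\Lambda}$ acting on $(\Lambda_{A}/2\Lambda_{A})^{\vee}\cong\mathbb{F}_{2}^{4}$.

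Concretely, I would first convert \eqref{eq:g2-g3}, \eqref{eq:e1-e2}, Lemma \ref{lem:Lambda-A} and \eqref{eq:sigma} into integral $4\times4$ matrices $g_{2,\Lambda}$, $g_{3,\Lambda}$, $\mathfrak{S}_{\Lambda}\in\mathsf{GL}(4,\mathbb{Z})$ giving the rational representations in the basis $\mathbf{e}_{1},\dots,\mathbf{e}_{4}$. For the holomorphic generators $g_{2},g_{3},-I_{2}$, Proposition \ref{Prop:BirLangeEssential} and invariance of $h_{A}$ give $\varrho(g)(\psi_{k})=\psi_{k}\circ g_{\Lambda}$; evaluating $\psi_{k}$ at the (generally non-basis) vectors $g_{\Lambda}\mathbf{e}_{j}$ requires the semicharacter cocycle \eqref{eq:semichar formula} for $h=\tfrac{1}{2}h_{A}$, whose $\mathrm{Im}\,h_{A}$-values are read from Table \ref{table:imh_A}. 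For the translation $\tau=t_{a}$ with $a=\zeta e_{1}$ (Remark \ref{rem:a}), formula \eqref{eq:translation-A} shows $\varrho(\tau)(\psi_{k})=\psi_{k}\cdot e^{\pi i\,\mathrm{Im}\,h_{A}(a,\,\cdot)}$, i.e.\ a twist by the sign character determined by the values $\mathrm{Im}\,h_{A}(\zeta e_{1},\,\mathbf{e}_{j})$.

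For the anti-holomorphic generator I would use Example \ref{ex:anti-holo-A}, which gives $\varrho(\sigma)(\psi_{k})=\overline{\psi_{k}\circ\mathfrak{S}_{\Lambda}}$; here the conjugation is genuinely needed, since $\psi_{1}$ takes the values $\pm i$ on $\mathbf{e}_{1}$ and $\mathbf{e}_{3}$, and it again reduces (via $\psi_{k}=\psi_{1}\eta_{k}$ with $\eta_{k}$ real) to the action on $\psi_{1}$ plus the linear action on characters. Matching each computed $g^{*}\psi_{k}$ against the labelled list \eqref{eq:semi-characters} then yields the four cycle decompositions in the statement. I expect the main obstacle to be purely bookkeeping: applying the cocycle correction correctly when a semicharacter is evaluated off the basis, and tracking the interaction of complex conjugation with the $\pm i$ entries under $\sigma$, all while respecting the left-to-right multiplication convention fixed for $\mathsf{S}_{16}$. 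A useful built-in consistency check, guaranteeing no sign has been dropped, is that the output must satisfy $\varrho(g_{2})^{2}=\varrho(g_{3})^{3}=\varrho(-I_{2})=\varrho(\tau)$, in accordance with the relations $g_{2}^{2}=g_{3}^{3}=-I_{2}$ of Proposition \ref{prop:The-group-Aut(A,D)}; the nontrivial part of this identity is the coincidence $\varrho(-I_{2})=\varrho(\tau)$, asserting that the automorphism $(-1)_{A}$ and the translation $\tau$ act by the same permutation on the sixteen square roots.
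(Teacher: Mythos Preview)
Your approach is correct and is essentially the same as the paper's: both proofs compute the rational representations of $g_{2},\,g_{3},\,\sigma$ on the basis $\mathbf{e}_{1},\dots,\mathbf{e}_{4}$, use $\varrho(g)(\psi_{k})=\psi_{k}\circ G_{\Lambda}$ (resp.\ $\overline{\psi_{k}\circ\mathfrak{S}_{\Lambda}}$) together with the semicharacter cocycle from Table~\ref{table:imh_A}, and then match the result against the list \eqref{eq:semi-characters}. Your decomposition $\psi_{k}=\psi_{1}\cdot\eta_{k}$ and your explicit treatment of $\tau$ via the translation formula \eqref{eq:translation-A} are organizational refinements rather than a different method; the paper handles $\tau$ more implicitly (effectively via $-I_{2}$, with which it must agree), but the underlying computation is the same.
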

\begin{proof}
Using the explicit expressions given in Proposition \ref{prop:The-group-Aut(A,D)}, by a standard computation we can check that  $g_2$, $g_3$, $\tau$ and $\sigma$ send the ordered basis $\{\mathbf{e}_1, \, \mathbf{e}_2, \, \mathbf{e}_3, \, \mathbf{e}_4 \}$ of $\Lambda_A$ to the bases
\begin{equation} \label{eq:mathbf-e}
\begin{split}
& \{\mathbf{e}_2 + \mathbf{e}_3 - \mathbf{e}_4, \; -2 \mathbf{e}_1+ \mathbf{e}_2  - \mathbf{e}_4, \; -\mathbf{e}_1 - \mathbf{e}_2 - 2 \mathbf{e}_3+ 2\mathbf{e}_4, \; -2\mathbf{e}_1 - 2 \mathbf{e}_3+ \mathbf{e}_4 \}, \\ 
 & \{-\mathbf{e}_3 + \mathbf{e}_4, \; -\mathbf{e}_1 +\mathbf{e}_2 + \mathbf{e}_3- \mathbf{e}_4, \; - 2\mathbf{e}_1 + \mathbf{e}_2 + \mathbf{e}_3 -2 \mathbf{e}_4, \; - 2\mathbf{e}_1 + 2 \mathbf{e}_2 + 2 \mathbf{e}_3 -3 \mathbf{e}_4 \}, \\
& \{-\mathbf{e}_3 \; - \mathbf{e}_2, \; - \mathbf{e}_3, \; - \mathbf{e}_4 \}, \\
& \{\mathbf{e}_3 - \mathbf{e}_4, \; -\mathbf{e}_1 +\mathbf{e}_2 +\mathbf{e}_3- \mathbf{e}_4, \; -\mathbf{e}_1 +2 \mathbf{e}_2 - \mathbf{e}_4, \; -2 \mathbf{e}_1 +2 \mathbf{e}_2 - \mathbf{e}_4 \},
\end{split}
\end{equation}
respectively. For any $g \in \mathrm{Aut}(A, \, D_A)$, calling $G_{\Lambda} \colon \Lambda_A \to \Lambda_A$ the corresponding rational representation we have $\varrho(g)(\psi_k) = \psi_k \circ G_{\Lambda}$, whereas $\varrho(\sigma)(\psi_k) = \overline{\psi_k \circ \mathfrak{S}_{\Lambda}}$ (see \eqref{eq:L-anti-holo-pullback}). Then another long but straightforward calculation using \eqref{eq:semi-characters} and \eqref{eq:mathbf-e}  concludes the proof.
\end{proof}
 
We are now ready to give the \\

\emph{Proof of Theorem $\mathrm{\ref{thm:The-number-of-Surfaces}}$}.  Since any surface $S$ of type $II$ is the minimal desingularization of a double cover $f \colon S \to A$, branched over $D_A$, by  Proposition \ref{prop:converse-holo} it follows that the number of surfaces of type $II$ up to isomorphisms (respectively, up to holomorphic and anti-holomorphic diffeomorphisms) equals the number of orbits for the permutation action of $\mathrm{Aut}(A, \, D_A)$ (respectively, of $\mathrm{Kl}(A, \, D_A)$) on the set $\{\mathcal{L}_1, \ldots, \mathcal{L}_{16}\}$ of the sixteen square roots of $\mathcal{O}_A(D_A)$. We have seen that such an action is  determined by the permutation action on the set of sixteen semicharacters $\{\psi_1, \ldots, \psi_{16}\}$, so we only have to compute the number of orbits for the subgroup of $\mathsf{S}_{16}$ whose generators are described in Proposition \ref{prop:permutation-action}.     
This can be done by hand, but it is easier to write a short script using the Computer Algebra System  \verb|GAP4| (\cite{GAP4}):
\begin{Verbatim}[commandchars=\\\{\}]
g2:=(1, 13, 7, 12)(2, 9, 14, 16)(3, 5, 15, 6)(4, 11, 8, 10);; 
g3:=(1, 13, 5, 7, 12, 6)(2, 4, 11, 14, 8, 10)(3, 15)(9, 16);;
sigma:=(1, 14)(2, 7)(3, 16)(4, 5)(6, 8)(9, 15)(10, 12)(11, 13);; 
Aut:=Group(g2, g3);; 
Kl:=Group(g2, g3, sigma);; 
StructureDescription(Aut);
\textcolor{red}{"SL(2,3)"}
OrbitsPerms(Aut, [ 1 .. 16 ]);
\textcolor{red}{[ [ 1, 7, 12, 13, 3, 15, 5, 6 ], [ 2, 14, 16, 9, 10, 11, 4, 8 ] ]} 
StructureDescription(Kl);
\textcolor{red}{"GL(2,3)"}
OrbitsPerms(Kl, [1..16]);
\textcolor{red}{[ [ 1, 7, 12, 13, 15, 3, 6, 5, 14, 2, 10, 11, 9, 16, 8, 4 ] ]}
\end{Verbatim} 
The first two output lines (in red) show that $\varrho$ induces an embedding of $\mathrm{Aut}_0(A, \, D_A)$ in $\mathsf{S}_{16}$ (note that $\rho$ is a group homomorphism because of our convention on the multiplication on $\mathsf{S}_{16}$), and that the corresponding permutation subgroup has precisely two orbits. Therefore there are exactly two surfaces $S_1$, $S_2$ of type $II$, up to isomorphisms. Analogously, the last two output lines show that $\varrho$ induces an embedding of $\mathrm{Kl}_0(A, \, D_A)$ in $\mathsf{S}_{16}$,
and the corresponding permutation subgroup has only one orbit. This means that there exists an anti-holomorphic diffeomorphism $S_1 \to S_2$, hence these surfaces are not isomorphic, but they have conjugated complex structures. \hfill $\Box$

\bigskip

Let us finally show that surfaces of type $II$ are not uniformized by the bidisk (unlike surfaces of type $I$, see Corollary \ref{cor:univ-I}).
\begin{proposition} \label{prop:univ-II}
Let $S$ be a surface of type $II$ and $\widetilde{S} \to S$ its universal cover. Then $\widetilde{S}$ is \emph{not} biholomorphic to $\mathbb{H} \times \mathbb{H}$.  
\end{proposition}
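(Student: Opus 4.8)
The plan is to derive a contradiction from the hypothesis $\widetilde S \cong \mathbb{H}\times\mathbb{H}$ by exhibiting a \emph{curve of genus one} inside $S$ and then invoking the hyperbolicity of the bidisk. The conceptual point is that, in sharp contrast with the type~$I$ case (where $S$ is a free quotient of a product of genus-three curves and hence contains no elliptic curves, cf. Corollary~\ref{cor:univ-I}), a surface of type~$II$ does carry genuine elliptic curves, and no such curve can sit on a compact quotient of a bounded domain.

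The first step is to recall, from Remark~\ref{rem:sing-Stein}, that in case~$II$ the birational morphism $c\colon S\to X$ contracts two disjoint smooth \emph{elliptic} curves $Z_1, Z_2$ with $Z_i^2=-2$ onto the two $\widetilde E_7$-singularities of $X$; these are honest genus-one curves lying on the minimal model $S$ (and, alternatively, the strict transforms of the four components $E_i$ of $D_A$, which are elliptic as noted in the proof of Proposition~\ref{prop:type-II}, would serve equally well). Fix any one of them and call it $Z$, with inclusion $\iota\colon Z\hookrightarrow S$. Now suppose for contradiction that the universal covering $\pi\colon \widetilde S\to S$ satisfies $\widetilde S\cong \mathbb{H}\times\mathbb{H}$. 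Let $u\colon \CC\to Z$ be the universal cover of the elliptic curve $Z$ and set $f:=\iota\circ u\colon \CC\to S$, a nonconstant holomorphic map. Since $\CC$ is simply connected and $\pi$ is an unramified holomorphic covering, $f$ lifts to a holomorphic map $g=(g_1,g_2)\colon \CC\to \mathbb{H}\times\mathbb{H}$ with $\pi\circ g=f$.

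To conclude, I would compose $g$ with the Cayley biholomorphism $\mathbb{H}\cong\Delta$ onto the unit disk in each factor: each $g_j\colon \CC\to\mathbb{H}$ then becomes a bounded entire function, hence constant by Liouville's theorem. Therefore $g$, and with it $f=\pi\circ g$, is constant, contradicting the fact that $f$ parametrizes the positive-dimensional curve $Z$. This forces $\widetilde S\not\cong \mathbb{H}\times\mathbb{H}$. I do not expect any genuine analytic obstacle here; the only step requiring care is the geometric input of the first paragraph, namely that the relevant $(-2)$-curves really descend to the minimal surface $S$ as smooth elliptic curves (this is exactly the content of Remark~\ref{rem:sing-Stein}, and it is precisely what separates type~$II$ from type~$I$). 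I would also remark that the very same argument shows $S$ is not a compact ball quotient either, so that the substantive geometry — the fact that a suitable open subvariety of $S$ \emph{is} a ball quotient — is what the subsequent remarks address.
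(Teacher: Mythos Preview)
Your proof is correct and follows essentially the same route as the paper: exhibit an elliptic curve on $S$, lift its uniformization $\CC\to S$ through the universal cover, and kill the resulting map $\CC\to\mathbb{H}\times\mathbb{H}$ with Liouville. The only cosmetic difference is that the paper uses the four elliptic curves $C_i$ with $C_i^2=-1$ arising from $\beta^*D_B$, while you use the exceptional curves $Z_i$; the paper itself notes (in the first of the concluding remarks) that the $Z_i$ serve equally well.
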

\begin{proof} 
Looking at diagram  \eqref{dia.resolution} in Section 1, we see that, in case $II$, the map $\varphi \colon B \to A$ is the blow-up of $A$ at the two quadruple points $p_1, \, p_2$ of the curve $D_A$ and that  $\bar{S}=S$. Moreover, considering $\beta \colon S \to B$ we have  
\begin{equation*}
\beta ^* D_B = C_1 + C_2 + C_3 + C_4, 
\end{equation*}
where the $C_i$ are (pairwise disjoint) elliptic curves with $C_i^2=-1$.
The embedding $C_{i} \to  S$, composed with the universal cover $\mathbb{C} \to C_i$, gives a 
non-constant holomorphic map $\CC \to S$, that in turn lifts to a non-constant holomorphic map $\mathbb{C} \to \widetilde{S}$. If $\widetilde{S}$ were isomorphic to $\mathbb{H} \times \mathbb{H}$, projecting onto one of the two factors we would obtain a non-constant holomorphic map $\mathbb{C} \to \mathbb{H}$, whose existence would contradict Liouville's theorem because $\mathbb{H}$ is biholomorphic to the bounded domain $\mathbb{D} = \{z \in \mathbb{C} \; : \, |z| < 1 \}$.
\end{proof}

\subsection{Concluding remarks} \label{subsec:final}

\begin{remark}
In the argument in the proof of Proposition \ref{prop:univ-II}, we could have used one of the elliptic curves $Z_1, \, Z_2$ instead of the $C_i$ (see Remark \ref{rem:sing-Stein}).
\end{remark}

\begin{remark}\label{openball}
Denoting by $\chi_{\textrm{top}}$ the topological Euler number, we have 
\begin{equation*}
\bigg(K_{S}+\sum_{i=1}^{4}C_{i} \bigg)^{2}=12=3 \, \chi_{\textrm{top}} \bigg(S - \sum_{i=1}^{4}C_{i} \bigg)
\end{equation*}
and
\begin{equation*}
\bigg(K_{S}+\sum_{i=1}^{2}Z_{i} \bigg)^{2}=12=3 \, \chi_{\textrm{top}} \bigg(S - \sum_{i=1}^{2}Z_{i} \bigg).
\end{equation*}
This implies that the open surfaces $S - \sum_{i=1}^{4}C_{i}$ and  $S - \sum_{i=1}^{2}Z_{i}$ both have the structure of a complex ball-quotient, see \cite{Ro14} for references and further details. 
\end{remark}

\begin{remark} \label{rem:def-diff}
The two non-isomorphic surfaces of type $II$ exhibit a new occurrence of the so-called \verb|Diff|$\nRightarrow$\verb|Def| phenomenon, meaning that their diffeomorphism type does not determine their deformation class. In fact, they are (anti-holomorphically) diffeomorphic,  but not deformation equivalent since they are rigid. See \cite{Man01}, \cite{ KK}, \cite{Ca03}, \cite{ CaWa07} for further examples of this situation. 
\end{remark}

\begin{remark} \label{rem:Shimura}
We can also give the following different proof of  Proposition \ref{prop:univ-II}, based on the same argument used in \cite[proof of Proposition 10]{RRS18}. We are indebted  to F. Catanese for several comments and suggestions on this topic. If a surface $S$ is uniformized by $\mathbb{H} \times \mathbb{H}$, then it is the quotient of $\mathbb{H} \times \mathbb{H}$ by a cocompact subgroup $\Gamma$ of $\mathrm{Aut}(\mathbb{H} \times \mathbb{H})=\mathrm{Aut}(\mathbb{H}) \rtimes (\mathbb{Z}/2 \mathbb{Z})$ acting freely. If $\Gamma$ is reducible in the sense of \cite[Theorem 1]{Shi63} it follows that $S$ is isogenous to a product; in particular, if $p_g=q=2$, $K_S^2=8$ then $S$ is of type $I$ by the classification given in \cite{Pe11}. Thus, if $S$ is of type $II$ then $\Gamma$ must be irreducible and so, by using \cite[Theorem 7.2]{MS63}, we infer
\begin{equation*}
q(S)=\frac{1}{2}b_1(S)= \frac{1}{2}b_1(\mathbb{P}^1 \times \mathbb{P}^1)=0,
\end{equation*}
a contradiction. 
\end{remark}

\begin{remark} \label{remGeomConstr}
It is possible to give a different geometric construction of the abelian surfaces $A'$, $A$ and of the divisor $D_A$ as follows.  Unfortunately, at present we do not know how to recover the $2$-divisibility of the curve $D_A$ in $\mathrm{Pic}(A)$ by using this alternative approach.

Let $F_1, \, F_2, \, F_3$ and $G_1, \,G_2, \, G_3$ be general fibres of the two rulings $f, \, g \colon\PP^1\times\PP^1 \to \mathbb{P}^1$, respectively; then 
the two reducible divisors $F_1+F_2+F_3$ and $G_1+G_2+G_3$ meet at nine distinct points. Consider three of these points, say  $p_1, \, p_2, \, p_3$, with the property that  
each $F_i$ and each $G_i$ contain exactly one of them. Then there exists precisely one (smooth) curve $C_1$ of bidegree $(1, \, 1)$ passing through $p_1, \, p_2, \, p_3$.
Similarly, if we choose three other points $q_1, \, q_2, \, q_3\notin\{p_1, \, p_2, \, p_3\}$ with the same property, there exist a unique curve $C_2$ of bidegree $(1, \, 1)$ passing through $q_1, \, q_2, \, q_3.$
The curves $C_1$ and $C_2$ meet at two points, say $r_1, \, r_2$, different from the points $p_i$ and $q_i$. 

Let us call $F_4$ and $G_4$ the fibres of $f$ and $g$ passing through one of these two points, say $r_1$. Then the reducible curve $B$ of bidegree $(4, \, 4)$ defined as 
\begin{equation*}
B=F_1+ \cdots +F_4+G_1+ \cdots +G_4
\end{equation*}
has sixteen ordinary double points as only singularities, and
the double cover $\phi \colon Q'\longrightarrow \PP^1\times\PP^1$ branched over 
$B$ gives a singular Kummer surface $Q'$; let us write $A'$ for the associate abelian surface. 
We can easily show that
\begin{equation*}
\phi^*C_1 = C_{11}+C_{12}, \quad \phi^*C_2 = C_{21}+C_{22},
\end{equation*}
where all the $C_{ij}$ are smooth and irreducible. Moreover we see that $C_{11}$ and $C_{22}$ intersect at exactly one point, which is a node of $Q'.$
Writing 
\begin{equation*}
\phi^*F_4 = 2 \widehat F_4, \, \quad \phi^*G_4 = 2 \widehat G_4,
\end{equation*}
we see that the rational curves $C_{11},$ $C_{22},$ $\widehat F_4, \, \widehat G_4$ meet at one node of $Q'$ and that each of them contains precisely four nodes of $Q'.$ Hence the pullback of these curves via the double cover $A' \to Q'$ yields four elliptic curves in $A'$ intersecting pairwise and transversally at a single point. 

Let us choose now $i, \, j, \, h, \, k \in \{1, \, 2, \, 3, \, 4 \}$, with $i\not=j$ and $h\not=k$, and consider the eight nodes of $B$ lying in the smooth locus of the curve $H=F_i+F_j+G_h+G_k$.
The $2$-divisibility of $H$ in $\mathrm{Pic}(\mathbb{P}^1 \times \mathbb{P}^1)$ implies that the corresponding set $\Xi$ of eight nodes in the Kummer surface $Q'$ is $2$-divisible, so we can consider the double cover $Q\longrightarrow Q'$ branched over $\Xi$.
The surface $Q$ is again a singular Kummer surface and, calling $A$ the abelian surface associate with $Q$,  we obtain a degree $2$ isogeny $A\longrightarrow A'$.
We can choose (in three different ways) $i, \, j, \, h, \, k$ so that each of the four curves $C_{11},$ $C_{22},$ $\widehat F_4$ and $\widehat G_4$ contains exactly two points of $\Xi$.
Therefore the pullback of these curves to $Q$ are rational curves, all passing through two of the nodes of $Q$ and containing four nodes each. This in turn gives four elliptic curves in $A$ meeting at two common points and not elsewhere, and the union of these curves is the desired divisor $D_A$.
\end{remark}


\bibliography{References}

\bigskip

\noindent Francesco Polizzi
\vspace{0.1cm}
\\ Dipartimento di Matematica e Informatica, Universit\`{a} della Calabria
\\ Cubo 30B, 87036 Arcavacata di Rende (Cosenza), Italy
\\ \verb|polizzi@mat.unical.it| 

\bigskip

\noindent Carlos Rito
\vspace{0.1cm}
\\{\it Permanent address:}
\\ Universidade de Tr\'as-os-Montes e Alto Douro, UTAD
\\ Quinta de Prados
\\ 5000-801 Vila Real, Portugal
\\ www.utad.pt, \verb|crito@utad.pt|
\\{\it Temporary address:}
\\ Departamento de Matem\' atica
\\ Faculdade de Ci\^encias da Universidade do Porto
\\ Rua do Campo Alegre 687
\\ 4169-007 Porto, Portugal
\\ www.fc.up.pt, \verb|crito@fc.up.pt|

\bigskip

\noindent Xavier Roulleau
\vspace{0.1cm}
\\Aix-Marseille Universit\'e, CNRS, Centrale Marseille,
\\I2M UMR 7373, 
\\13453 Marseille, France
\\ \verb|Xavier.Roulleau@univ-amu.fr|

\end{document}